\documentclass[11pt,english,british,refpage,intoc,bibliography=totoc,index=totoc,BCOR=7.5mm,captions=tableheading]{extarticle}
\usepackage{mathptmx}
\usepackage[T1]{fontenc}
\usepackage[latin9]{inputenc}
\usepackage[a4paper]{geometry}
\geometry{verbose,tmargin=0.9in,bmargin=1.1in,lmargin=1in,rmargin=1in}
\usepackage{color}
\usepackage{babel}
\usepackage{amsmath}
\usepackage{amsthm}
\usepackage{amssymb}
\usepackage{wasysym}
\usepackage[numbers]{natbib}
\usepackage[unicode=true,pdfusetitle,
 bookmarks=true,bookmarksnumbered=true,bookmarksopen=false,
 breaklinks=false,pdfborder={0 0 0},pdfborderstyle={},backref=false,colorlinks=true]
 {hyperref}
\hypersetup{
 linkcolor=black, citecolor=blue, urlcolor=black, filecolor=blue, pdfpagelayout=OneColumn, pdfnewwindow=true, pdfstartview=XYZ, plainpages=false}

\makeatletter
%%%%%%%%%%%%%%%%%%%%%%%%%%%%%% Textclass specific LaTeX commands.
\numberwithin{equation}{section}
\numberwithin{figure}{section}
\theoremstyle{plain}
\newtheorem{thm}{\protect\theoremname}[section]
\theoremstyle{plain}
\newtheorem{lem}[thm]{\protect\lemmaname}

\@ifundefined{date}{}{\date{}}
%%%%%%%%%%%%%%%%%%%%%%%%%%%%%% User specified LaTeX commands.
\usepackage{caption}
\usepackage[nottoc]{tocbibind}
\allowdisplaybreaks[4]
\usepackage{dsfont}
\DeclareMathAlphabet{\mathcal}{OMS}{cmsy}{m}{n}

\makeatother

\addto\captionsbritish{\renewcommand{\lemmaname}{Lemma}}
\addto\captionsbritish{\renewcommand{\theoremname}{Theorem}}
\addto\captionsenglish{\renewcommand{\lemmaname}{Lemma}}
\addto\captionsenglish{\renewcommand{\theoremname}{Theorem}}
\providecommand{\lemmaname}{Lemma}
\providecommand{\theoremname}{Theorem}

\begin{document}
\global\long\def\Sgm{\boldsymbol{\Sigma}}%
\global\long\def\W{\boldsymbol{W}}%
\global\long\def\H{\boldsymbol{H}}%
\global\long\def\P{\mathbb{P}}%
\global\long\def\Q{\mathbb{Q}}%
\newcommand{\dd}{{\rm d\hspace{0.1mm}}}
\newcommand{\R}{{\mathbb{R}}}

\title{New Approach for Vorticity Estimates of Solutions of the Navier-Stokes Equations}
\author{Gui-Qiang G. Chen\thanks{Mathematical Institute, University of Oxford, Oxford OX2 6GG. Email:
\protect\href{mailto:chengq@maths.ox.ac.uk}{chengq@maths.ox.ac.uk}} \,\,\,
and \,\,\,  Zhongmin Qian\thanks{Mathematical Institute, University of Oxford, Oxford OX2 6GG, and OSCAR, Suzhou, China. Email:
\protect\href{mailto:qianz@maths.ox.ac.uk}{qianz@maths.ox.ac.uk}}}
\maketitle

\begin{abstract}
We develop a new approach for regularity estimates, especially vorticity estimates, of solutions of
the three-dimensional Navier-Stokes equations with periodic initial data,
by exploiting carefully formulated linearized vorticity equations.
An appealing feature of the linearized vorticity equations
is the inheritance of the divergence-free
property of solutions, so that it can intrinsically be employed to construct and estimate
solutions of the Navier-Stokes equations.
New regularity estimates of strong solutions of the three-dimensional Navier-Stokes equations
are obtained by deriving new explicit \emph{a priori} estimates for
the heat kernel  ({\it i.e.}, the fundamental solution)
of the corresponding heterogeneous drift-diffusion operator.
These new \emph{a priori} estimates
are derived
by using various functional integral representations of the heat kernel
in terms of the associated diffusion processes and their conditional laws,
including a Bismut-type formula for the gradient of the heat kernel.
Then the \emph{a priori} estimates of solutions of the linearized vorticity equations
are established by employing a Feynman-Kac-type formula. The existence
of strong solutions and their regularity estimates up to a time proportional
to the reciprocal of the square of the maximum initial vorticity are
established.
All the estimates established in this paper contain known
constants that can be explicitly computed.

\selectlanguage{english}%
\medskip

\emph{Key words}: 
Gradient estimates, vorticity estimates, approach, iteration scheme,
\emph{a priori} estimates, strong solutions, Navier-Stokes equations, vorticity equations,
heat kernel, fundamental solution,
stochastic diffusion process, heterogeneous drift-diffusion operator,
conditional laws, Bismut-type formula, Feynman-Kac-type formula.

\medskip
\emph{MSC classifications}:  Primary: 35Q30, 35Q35, 35B65, 35B45, 35D35, 76D05;  $\,$ Secondary: 35K45, 35A08, 35B30, 35Q51
\end{abstract}

\section{Introduction}
We are concerned with the quantitative regularity estimates of
solutions of the
Navier-Stokes equations in $\R^3$.
In this paper, we consider the Cauchy problem for the Navier-Stokes equations for $x\in\mathbb{R}^{3}$ and $t\geq0$:
\begin{equation}\label{t-02}
\begin{cases}
 \partial_t u + u\cdot\nabla u+\nabla P=\nu\Delta u,\\[1mm]
\nabla\cdot u=0, 
\end{cases}
\end{equation}
subject to the periodic initial condition:
\begin{equation}\label{ID}
u|_{t=0} =u_{0},
\end{equation}
satisfying that $u_0(x+L\mathbf{k})=u_0(x)$ for all $\mathbf{k}\in\mathbb{Z}^{3}$
and $\nabla\cdot u_{0}=0$,
where $\nu>0$ is the kinematic constant, and $L>0$ is the period of the initial data.
Although periodic flows are special in nature, the periodic solutions
can be considered as ideal solutions for turbulent motions
away from their physical boundaries, or as models for homogeneous turbulent flows.

The Cauchy
problem \eqref{t-02}--\eqref{ID} for
the Navier-Stokes equations \eqref{t-02}
with periodic
initial data \eqref{ID}
seeks for a velocity
vector field $u(x,t)$ and a scalar pressure $P(x,t)$ that are periodic
functions with period $L$ satisfying the initial condition that $u(x,0)=u_{0}(x)$,
where $u_{0}(x)$ is the periodic initial velocity in \eqref{ID}, which is divergence-free, {\it i.e.},
$\nabla\cdot u_{0}=0$.

The mathematical study of global solutions of the Navier-Stokes equations \eqref{t-02}
was initiated
in the work of Leray \citep{Leray1933,Leray1934} and Hopf \citep{Hopf1941,Hopf1950},
in which global weak solutions
were constructed and investigated.
Since then, many properties and features of
the solutions of the Navier-Stokes equations \eqref{t-02}
have been understood;
see \citep{Galdi1994a,Galdi1994b,Ladyzenskaja1961,Sohr2001,Temam1977,von Wahl1985}
and the references cited therein.
The Navier-Stokes equations \eqref{t-02}
have been
studied
by using various methods; the mathematical analysis
of \eqref{t-02}
has been mainly
based on several functional analysis methods and on the results on
certain functional spaces such as the Sobolev spaces and the Besov spaces.
Great progress has been made in the past decades, the existences of local and global
solutions of the Navier-Stokes equations \eqref{t-02}
have been studied ({\it cf.}
\citep{FabesJonesRiviere1972,FujitaKato1964,Kato1984,LinLei2011,Serrin1962,Serrin1963},
besides the references cited therein and above).
The partial regularity of the weak solutions of the Navier-Stokes equations \eqref{t-02}
has  also
received intensive study; see \citep{CKN1982,Lin1998,Scheffer1976,Scheffer1977,Scheffer1978,Scheffer1980}
and the references cited therein.
However,
the quantitative regularity of solutions, the global existence of strong solutions,
and the uniqueness of weak solutions of the three-dimensional (3-D) Navier-Stokes equations remain to be the major
open problems in Mathematics.

\subsection{Main theorem}
Assume that the periodic initial data function $u_{0}$ is smooth and divergence-free.
Then any strong solution $u(x,t)$ of the Cauchy problem \eqref{t-02}--\eqref{ID}
must have a constant mean velocity, so that
it is assumed without loss of generality that
\begin{equation}\label{1.4a}
\varint_{[0,L]^{3}}u(x,t)\,\textrm{d}x=0,
\end{equation}
due to the Galilean invariance of system \eqref{t-02}.
Denote the vorticity:
\begin{equation}\label{1.5a}
\omega(x,t)=\nabla\wedge u(x,t)
\end{equation}
which is the curl of the velocity.
Then $\omega_{0}=\nabla\wedge u_{0}$ is the initial vorticity.

In this paper, we develop a new approach, {\it i.e.}, an iteration scheme,
to construct and estimate strong solutions of the Cauchy problem \eqref{t-02}--\eqref{ID}
more sharply than the existing results,
by developing an array of useful mathematical tools in Analysis.
The main results for the quantitative estimates of strong solutions can be stated in the following theorem.

\begin{thm}[Main Theorem]\label{thm:main1}
There are two universal constants $C_{1}>0$ and
$C_{2}>0$ such that there exists a unique strong solution $u(x,t)$ of the Cauchy problem
\eqref{t-02}--\eqref{ID} for all $t\in [0, T_{0}]$ with
\begin{equation}
T_{0}=C_{1}\nu^{2}L^{-4}\left\Vert \omega_{0}\right\Vert _{\infty}^{-2},\label{eq:T-zero}
\end{equation}
so that the following estimates for $u(x,t)$ hold{\rm :} For all $0< t\leq T_{0}$ and $x\in\mathbb{R}^{3}$,
\begin{align}
&|u(x,t)|\leq C_{2}L\left\Vert \omega_{0}\right\Vert _{\infty},\qquad
 |\nabla u(x,t)|\le \frac{C_2L}{\sqrt{\nu t}}\left\Vert \omega_{0}\right\Vert_{\infty},\label{eq:c-01e}\\
&|\omega(x,t)|\leq C_{2}\left\Vert \omega_{0}\right\Vert _{\infty},
\qquad |\nabla\omega(x,t)|\leq\frac{C_{2}}{\sqrt{\nu t}}\left\Vert \omega_{0}\right\Vert _{\infty}
 \label{eq:C-02e}
\end{align}
where
$\left\Vert \omega_{0}\right\Vert _{\infty}$
is the $L^{\infty}$-norm of the initial vorticity $\omega_{0}$.
\end{thm}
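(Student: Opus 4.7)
The plan is to construct the strong solution as the limit of an iteration in which the vorticity equation is linearised by freezing the transport and stretching velocity. Set $u^{(0)}\equiv 0$ with $\omega^{(0)}=\omega_{0}$ and, given a divergence-free, periodic, mean-zero iterate $u^{(n)}$, define $\omega^{(n+1)}$ as the unique solution of the linearised vorticity equation
\begin{equation*}
\partial_{t}\omega^{(n+1)}+u^{(n)}\cdot\nabla\omega^{(n+1)}-\omega^{(n+1)}\cdot\nabla u^{(n)}=\nu\Delta\omega^{(n+1)},\qquad\omega^{(n+1)}|_{t=0}=\omega_{0},
\end{equation*}
and recover $u^{(n+1)}$ by inverting $-\Delta u^{(n+1)}=\nabla\wedge\omega^{(n+1)}$ on $[0,L]^{3}$ under the mean-zero normalisation \eqref{1.4a}. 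The first task is to check that each iterate is periodic and divergence-free---a structural feature stressed in the abstract---so that the Biot--Savart reconstruction is consistent and the scheme is well-defined as long as $u^{(n)}$ has enough regularity.

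The quantitative estimates needed to close the iteration come from a Feynman--Kac-type representation of $\omega^{(n+1)}$, expressing it as the expectation of a matrix-valued multiplicative functional acting on $\omega_{0}$ along the time-reversed diffusion
\begin{equation*}
\textrm{d}X_{s}=u^{(n)}(X_{s},t-s)\,\textrm{d}s+\sqrt{2\nu}\,\textrm{d}B_{s},\qquad X_{0}=x.
\end{equation*}
The multiplicative functional absorbs the vortex-stretching term $\omega\cdot\nabla u^{(n)}$, so $\|\omega^{(n+1)}(t)\|_{\infty}$ is controlled by $\|\omega_{0}\|_{\infty}$ times an exponential of $\int_{0}^{t}\|\nabla u^{(n)}(s)\|_{\infty}\,\textrm{d}s$. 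For the gradient I would invoke a Bismut-type formula for the heat kernel of the heterogeneous drift-diffusion operator $\partial_{t}-u^{(n)}\cdot\nabla-\nu\Delta$ in order to trade the spatial derivative for a Malliavin weight whose variance is of order $(\nu t)^{-1}$; combined with the multiplicative functional, this produces the short-time singular bound $\|\nabla\omega^{(n+1)}(t)\|_{\infty}\le C(\nu t)^{-1/2}\|\omega_{0}\|_{\infty}$ with an explicit $C$ depending only on $\int_{0}^{t}\|\nabla u^{(n)}\|_{\infty}$. The periodic Biot--Savart reconstruction then yields
\begin{equation*}
\|u^{(n+1)}(t)\|_{\infty}\lesssim L\,\|\omega^{(n+1)}(t)\|_{\infty},\qquad\|\nabla u^{(n+1)}(t)\|_{\infty}\lesssim L\,\|\nabla\omega^{(n+1)}(t)\|_{\infty},
\end{equation*}
the second inequality being handled via $\nabla u=K_{L}*\nabla\omega$, which shifts the derivative onto $\omega$ and avoids the Calder\'on--Zygmund singularity, thereby inheriting the $(\nu t)^{-1/2}$ short-time behaviour from the previous step.

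Combining the two displays, the iteration becomes a self-map of the set
\begin{equation*}
\mathcal{A}_{K,T}=\bigl\{u:\|u(t)\|_{\infty}\le KL\|\omega_{0}\|_{\infty},\ \|\nabla u(t)\|_{\infty}\le KL(\nu t)^{-1/2}\|\omega_{0}\|_{\infty}\text{ for }0<t\le T\bigr\},
\end{equation*}
provided $T$ is small enough that the exponential of $\int_{0}^{T}\|\nabla u^{(n)}\|_{\infty}\,\textrm{d}s$ stays within a factor that can be absorbed into $K$. Carrying out this quantitative balance determines a time scale of exactly the form \eqref{eq:T-zero}, with $C_{1}$ and $C_{2}$ emerging as explicit combinations of the absolute constants appearing in the Bismut and Feynman--Kac bounds and in the periodic Biot--Savart kernel estimates. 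A contraction estimate for the difference $\omega^{(n+1)}-\omega^{(n)}$, obtained by applying the same stochastic representation to the (affine) equation satisfied by the difference, then gives convergence of the iterates in, for example, $C([0,T_{0}];C^{1})$, whence both the existence of a strong solution on $[0,T_{0}]$ and its uniqueness within the class where the estimates \eqref{eq:c-01e}--\eqref{eq:C-02e} hold.

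The hard part is the closure of the $L^{\infty}$ gradient estimate for $\omega$ under iteration. The vortex-stretching term is only zeroth-order in $\omega$, but it involves $\nabla u^{(n)}$, which carries the short-time singularity $(\nu t)^{-1/2}$; a naive Gronwall exponentiation produces a multiplicative constant that strictly grows from one iteration to the next and destroys the uniform-in-$n$ bound. What makes the scheme close is the Bismut-type gradient formula for the heterogeneous heat kernel, which has an \emph{explicit} constant in front of $(\nu t)^{-1/2}$, together with the fact that this singularity is integrable near $t=0$. Turning these two ingredients into a self-improving inequality with a single universal $K$ on the interval \eqref{eq:T-zero}, with all absolute constants made explicit, is the technical heart of the argument.
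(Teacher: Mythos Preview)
Your proposal is correct and follows essentially the same approach as the paper: the iteration on the linearised vorticity equation with frozen drift and stretching, the Feynman--Kac representation controlling $\|\omega^{(n+1)}\|_\infty$ via the multiplicative functional, the Bismut-type gradient formula for the drift-diffusion heat kernel yielding the $(\nu t)^{-1/2}$ bound on $\nabla\omega^{(n+1)}$, and the periodic Biot--Savart (Green-function) reconstruction of $u^{(n+1)}$ and $\nabla u^{(n+1)}$ are exactly the paper's engine, and your self-map set $\mathcal{A}_{K,T}$ is the content of the paper's Theorem~6.5. The only notable deviations are cosmetic: the paper initialises with $u^{(0)}(x,t)=u_{0}(x)$ rather than $0$, and it passes to the limit by parabolic regularity plus compactness (subsequence extraction) rather than the contraction estimate you outline.
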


The two positive constants $C_{1}$ and $C_{2}$ in Theorem \ref{thm:main1}
are computable, which can be worked out by tracing all the universal constants in the proof.

The Cauchy problem \eqref{t-02}--\eqref{ID}
of the Navier-Stokes equations with periodic initial data $u_0(x)$
has been studied traditionally in the Fourier space, which is particularly
the case in turbulence literature;
see, for example, \citep{Batchelor1953,ConstantinFoias1988,Foiasetal2001}.
Our approach is necessary to departure from the well-known methods.
The quantitative regularity estimates, Theorem \ref{thm:main1}, will be proved by
developing a new approach via
an array of mathematical tools from the theory of partial
differential equations, stochastic analysis, and the Hodge theory on
the torus.

\subsection{New approach -- An iteration scheme for the construction and estimates of strong solutions}

We now describe briefly the new approach -- an iteration scheme -- developed in this paper to prove Theorem
\ref{thm:main1}; see \S 6--\S 7 for details.

First of all, by using the dimensionless scaling,
$$
U(x,t)=\frac{L}{2\nu}u(Lx,\frac{L^2}{2\nu} t)
$$
has period $1$ and solves the Navier-Stokes equations \eqref{t-02}
with $\nu=\frac{1}{2}$.
Thus, without loss of generality, we will assume that the viscosity
constant $\nu=\frac{1}{2}$ and $L=1$ in what follows.
Furthermore, from now on, by a (time-dependent) periodic tensor field $f(x,t)$
on $\mathbb{R}^{3}$,
or equivalently by saying that a tensor field $f(x,t)$ is periodic,
we mean that $f(x,t)$ is a tensor field on $\mathbb{R}^{3}$ depending
on the time parameter $t$ and satisfies that $f(x+\bold{k},t)=f(x,t)$
for all $x\in\mathbb{R}^{3}$, $t\geq0$, and $\bold{k}\in\mathbb{Z}^{3}$.

\smallskip
Our new iteration scheme for the construction of strong solutions of
the Cauchy problem \eqref{t-02}--\eqref{ID}
is based on the vorticity equation for $\omega=\nabla\wedge u$:
\begin{equation}
\partial_t\omega+(u\cdot\nabla)\omega-A(u)\omega-\frac{1}{2}\Delta\omega=0,\label{vort-p1-1}
\end{equation}
where $A(u)$ is the total derivative of $u$, a tensor field with
components $A(u)_{j}^{i}=\partial_{x^{j}}u^{i}$.
Although
there are several formulations of the vorticity equations, one of our main observations
is that this version of formulation serves our aims particularly well.

Suppose that $b(x,t)$ is a periodic, smooth, and
divergence-free vector field such that $b(x,0)=u_{0}(x)$.
Then we define a vector field $w(x,t)$, which should be a candidate of the
vorticity (while $w$ is not in general the vorticity of $b$),
by solving the Cauchy problem of the following linear parabolic equations:
\begin{equation}\label{eq:9-06-1}
\begin{cases}
\partial_t w+(b\cdot\nabla)w-A(b)w-\frac{1}{2}\Delta w=0,\\
w(\cdot,0)=\omega_{0},
\end{cases}
\end{equation}
where $A(b)=(A(b)_{j}^{i})=(\partial_{x^j}b^{i})$ is the total
derivative of $b$.
The unique solution $w(x,t)$ has two properties that are important
to our approach:

\begin{enumerate}
\item[\rm (i)] It can be shown that $\nabla\cdot w=0$ (divergence-free again);
this is the key property that makes the linear parabolic equations (\ref{eq:9-06-1})
appealing and workable to our task.

\item[\rm (ii)]
$\varint_{[0,1]^{3}}w(x,t)\,\textrm{d}x=0$ for all $t>0$,
which is satisfied when $t=0$.
\end{enumerate}
With these,
we define the candidate $v(x,t)$ for the velocity by solving
the Poisson equation:
$$
\begin{cases}
\Delta v=-\nabla\wedge w,\\
\varint_{[0,1]^{3}}v(x,t)\,\textrm{d}x=0 \qquad \mbox{for any $t>0$}.
\end{cases}
$$
Then $w=\nabla\wedge v$, according to the Hodge theory on the torus.

In this way, we
construct a mapping $V$ that sends $b(x,t)$
to $v(x,t)$. That is, the iteration for obtaining a strong solution is defined
as
\begin{equation}\label{1.11a}
u^{(n)}=V(u^{(n-1)})\qquad\,\, \mbox{for $n=1,2,\ldots$},
\end{equation}
with the initial
iteration defined by $u^{(0)}(x,t)=u_{0}(x)$ for all $x$ and $t\geq0$.

\smallskip
Let us point out that the computational schemes for simulations of turbulent
flows based on
different formulations
of the vorticity equations have been very fruitful in
the past, {\it cf.} \citep{Chorin 1973,Majda and Bertozzi 2002} for an
overview. Our analysis below shows that the iteration scheme developed in this paper
does converge to the strong solution with inherent gradient estimates
of the iteration solutions uniformly.
This shows that the iteration scheme should also be useful for developing numerical algorithms
to compute
turbulent solutions.

\subsection{Assumptions and notations}

In order to describe the technical aspects in our study, we introduce
a few notations and assumptions that will be used throughout the
paper. By a function or a tensor field we mean a function 
defined on $\mathbb{R}^{d}$ or a tensor field on the torus
$\mathbb{T}^{d}=\mathbb{R}^{d}/\mathbb{Z}^{d}$; in the
latter case, it is identified with a periodic field on $\mathbb{R}^{d}$
with period $1$ in each coordinate variable.

Suppose that $f(x,t)$ for $x\in\mathbb{R}^{d}$
is a tensor field depending on a time parameter $t\geq0$.
Then we assume that $f$ is Borel measurable on $\mathbb{R}^{d}\times[0,\infty)$.
For $I\subset[0,\infty)$, the $L^{\infty}$-norm of $f$
over $\mathbb{R}^{d}\times I$ is defined to be
\[
\left\Vert f\right\Vert _{L^{\infty}(I)}=\sup_{(x,t)\in\mathbb{R}^{d}\times I}\left|f(x,t)\right|,
\]
and for the case when $I=[0,\infty)$, the previous norm is simply
denoted by $\left\Vert f\right\Vert _{\infty}$.

If $0\leq\tau<T$,
the parabolic $L^{\infty}$-norm of $f$ over an interval $[\tau,T]$
will play an important role, which is defined by
\begin{equation}
\left\Vert f\right\Vert _{\tau\rightarrow T}
=\sup_{(x,t)\in\mathbb{R}^{d}\times[\tau,T]}\left|\sqrt{t-\tau}f(x,t)\right|
=\left\Vert \sqrt{\cdot-\tau}f\right\Vert _{L^{\infty}([\tau,T])}.\label{eq:p-norm}
\end{equation}
In the case when $T=\infty$, the interval $[\tau,T]$ is replaced
by $[\tau,\infty)$. From the definition, it is clear that
\begin{equation}
\left\Vert f\right\Vert _{\tau\rightarrow T}
\leq\sqrt{T-\tau}\left\Vert f\right\Vert _{L^{\infty}([\tau,T])}
\leq\sqrt{T-\tau}\left\Vert f\right\Vert _{\infty}.\label{eq:1-01}
\end{equation}

Throughout the paper, the probability density function (PDF) of a normal
random variable with mean zero and variance $t>0$ is denoted by
\begin{equation}
G_{t}(x)=\frac{1}{(2\pi t)^{d/2}}\exp(-\frac{|x|^{2}}{2t})
\qquad \mbox{for $x\in\mathbb{R}^{d}$}. \label{re-G1}
\end{equation}
Notice that
$G_{t-\tau}(y-x)$ is the
fundamental solution of the heat operator $\partial_t-\frac{1}{2}\Delta$
in the Euclidean space $\mathbb{R}^{d}$.

As a convention, the Laplacian $\Delta$ and
the gradient
$\nabla$ (in particular, the divergence operation $\nabla\cdot$ and
the curl $\nabla\wedge$), when operating on time-dependent tensor
fields on $\mathbb{R}^{d}$, apply to space variable $x$ only.

If $b(x,t)$ is a time-dependent vector field on $\mathbb{R}^{d}$
for $t\geq0$,
the heterogeneous drift-diffusion
differential operator of second
order:
\begin{equation}
L_{b(x,t)}=\frac{1}{2}\Delta+b(x,t)\cdot\nabla\label{eq:form 2}
\end{equation}
will play an important role in our study. When no confusion arises,
$L_{b(x,t)}$ is denoted simply by $L_{b}$.
Among the technical assumptions on $b(x,t)$, the most essential one
is the assumption
that $b(x,t)$ is \emph{solenoidal} ({\it i.e.}, \emph{divergence-free}).
That is, for every $t$, the divergence $\nabla\cdot b(\cdot,t)$ vanishes
identically in the distributional sense.
Under this assumption, the formal
adjoint operator:
$$
L_{b}^{\star}=L_{-b},
$$
which is again an elliptic
operator of the same type.
The second assumption is technical for
the construction of probabilistic structures. For simplicity, we assume
that $b(x,t)$ is Borel measurable and bounded over any finite
interval, {\it i.e.}, $\left\Vert b\right\Vert _{L^{\infty}(\mathbb{R}^d\times [0,T])}<\infty$
for every $T>0$. The probability density function of the $L_{b}$-diffusion
({\it i.e.}, the fundamental solution or heat kernel to the parabolic operator
$L_{b(x,t)}^{\star}+\partial_t$; see \S 2) is
denoted by
$$
p_{b}(\tau,x,t,y) \qquad \mbox{for $t>\tau\geq0$ and $x,y\in\mathbb{R}^{d}$}.
$$

Throughout the paper, universal constants (the constants depending only
on the dimension $d$, or some parameters $\beta$, $\gamma$, {\it etc.}
introduced in proofs) are denoted by $C_{1}$, $C_{2}$, {\it etc}.
which may be different at each occurrence.

\subsection{\emph{A priori} estimates of solutions of the parabolic equations}

In order to prove Theorem \ref{thm:main1}, the main effort is to
derive precise \emph{a priori} estimates of solutions of the Cauchy problem \eqref{eq:9-06-1} for the parabolic equations.
To achieve this, there are two tasks to be
carried out.

\medskip
1. {\it The main task is to derive explicit \emph{a priori} estimates for
the fundamental solution {\rm (}or called the heat kernel{\rm )} of the parabolic
operator $\partial_{t}-L_{b}$ and its gradient
in terms of the bound of $b$ and the parabolic norm of $\nabla b$,
when $b(x,t)$ is a bounded, divergence-free, and smooth vector field
on $\mathbb{R}^{d}$}.

\smallskip
Although the regularity theory for linear parabolic
equations has been well
established (cf. \citep{Friedman 1964,Ladyzenskaja Solonnikov and Uralceva 1968,Stroock 2008}),
our \emph{a priori} estimates contain the universal constants depending
only on the dimension $d$ and a parameter $\beta>1$
fixed in 
our estimates.
In addition to its explicit form, for a divergence-free vector field $b(x,t)$,
the gradient estimate for the heat kernel
associated with the parabolic operator $\partial_t-L_{b}$,
depends only on the bound of $b$ and its first-order derivative $\nabla b$.

\begin{thm}\label{thm:b-est}
Let $b(x,t)$ be a smooth, divergence-free, and bounded time-dependent
vector field on $\mathbb{R}^{d}$. Then, for every $\beta>1$,
there are constants $C_{1}$ and $C_{2}$ depending only on $\beta$
and dimension $d$ such that
\begin{align}
&p_{b}(\tau,x,t,y)\leq C_{1}\textrm{e}^{C_{2}(t-\tau)
  \left\Vert b\right\Vert _{\infty}^{2}}G_{\beta(t-\tau)}(y-x),\label{eq:p-est1}\\
&\left|\nabla_{y}p_{b}(\tau,x,t,y)\right|
   \leq\frac{C_{1}}{\sqrt{t-\tau}}\textrm{e}^{C_{2}(t-\tau)
   \left\Vert b\right\Vert _{\infty}^{2}
   +\frac{1}{2}\sqrt{t-\tau}
   \left\Vert \nabla b\right\Vert _{\tau\rightarrow t}}G_{\beta(t-\tau)}(y-x),\label{eq:grad-e2}
\end{align}
for all $t>\tau$ and $x,y\in\mathbb{R}^{d}$.
\end{thm}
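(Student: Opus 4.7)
My plan is to exploit the probabilistic representation of $p_{b}$ as the transition density of the SDE $\textrm{d}X_{s}=b(X_{s},s)\,\textrm{d}s+\textrm{d}W_{s}$. Since $b$ is bounded, standard Girsanov theory yields the fundamental relation
$$
 p_{b}(\tau,x,t,y)=G_{t-\tau}(y-x)\,\mathbb{E}^{x,y}_{\tau,t}\!\left[\exp\!\left(\int_{\tau}^{t}b(W_{s},s)\cdot \textrm{d}W_{s}-\tfrac{1}{2}\int_{\tau}^{t}|b(W_{s},s)|^{2}\,\textrm{d}s\right)\right],
$$
where the expectation is taken over the pinned (Brownian-bridge) measure from $(\tau,x)$ to $(t,y)$. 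This is the engine for both estimates.

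For the upper bound \eqref{eq:p-est1}, the idea is to trade part of the Gaussian decay for room to absorb the Girsanov exponential. Write the free Gaussian as $G_{t-\tau}=G_{\beta(t-\tau)}\cdot (G_{t-\tau}/G_{\beta(t-\tau)})$; the quotient is bounded by $\beta^{d/2}\exp\!\bigl(\tfrac{|y-x|^{2}}{2(t-\tau)}\cdot\tfrac{1-1/\beta}{1}\bigr)$ in the wrong direction, so instead I would apply Hölder's inequality inside the bridge expectation with a conjugate pair $(p,q)$ chosen so that $p\cdot\frac{1}{t-\tau}=\frac{1}{\beta(t-\tau)}$. The $L^{q}$-norm of the Doléans-Dade exponential on the bridge is bounded via the elementary identity $\mathcal{E}(M)^{q}=\mathcal{E}(qM)\,\exp\!\bigl(\tfrac{q(q-1)}{2}\langle M\rangle\bigr)$ together with $\int_{\tau}^{t}|b|^{2}\,\textrm{d}s\le (t-\tau)\|b\|_{\infty}^{2}$, which yields a factor $\exp(C_{2}(t-\tau)\|b\|_{\infty}^{2})$. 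Here the divergence-free assumption is used to ensure that the martingale property of $\mathcal{E}(\int b\cdot \textrm{d}W)$ passes cleanly to the bridge measure via the time-reversal/adjoint identity $L_{b}^{\star}=L_{-b}$.

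For the gradient bound \eqref{eq:grad-e2}, the plan is to use a Bismut-type formula in the spirit of Malliavin calculus. Differentiating the Brownian-bridge representation in $y$ produces, after an integration by parts in Wiener space, an identity of the schematic form
$$
 \nabla_{y}p_{b}(\tau,x,t,y)=G_{t-\tau}(y-x)\,\mathbb{E}^{x,y}_{\tau,t}\!\left[\Xi_{\tau,t}\exp\!\left(\int_{\tau}^{t}b\cdot\textrm{d}W-\tfrac{1}{2}\int_{\tau}^{t}|b|^{2}\,\textrm{d}s\right)\right],
$$
where the weight $\Xi_{\tau,t}$ combines the standard bridge factor $-(y-x)/(t-\tau)$ (giving the $1/\sqrt{t-\tau}$ prefactor) with a contribution from the derivative of the stochastic flow involving $\nabla b$. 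The flow-derivative term produces, after Gronwall, an exponential factor controlled by $\int_{\tau}^{t}|\nabla b(W_{s},s)|\,\textrm{d}s$. Using the parabolic norm $\|\nabla b\|_{\tau\rightarrow t}$ defined in \eqref{eq:p-norm}, this integral is estimated by
$$
 \int_{\tau}^{t}|\nabla b(\cdot,s)|\,\textrm{d}s\leq \|\nabla b\|_{\tau\rightarrow t}\int_{\tau}^{t}\frac{\textrm{d}s}{\sqrt{s-\tau}}=2\sqrt{t-\tau}\,\|\nabla b\|_{\tau\rightarrow t},
$$
which produces precisely the $\tfrac{1}{2}\sqrt{t-\tau}\|\nabla b\|_{\tau\rightarrow t}$ exponential factor (after one more Hölder split to distribute the weight). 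The Hölder step then yields the $\beta(t-\tau)$ Gaussian exactly as in the first estimate.

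The main obstacle I anticipate is twofold. First, making the Bismut/integration-by-parts formula rigorous on the Brownian bridge rather than the full Wiener space requires care, since the bridge measure is singular with respect to the original Wiener measure; a standard workaround is to apply Bismut's formula on a sub-interval $[s,t]$ with $s=\tau+\tfrac{1}{2}(t-\tau)$ and use the Chapman-Kolmogorov identity together with the bound \eqref{eq:p-est1} already proved on $[\tau,s]$. Second, tracking the numerical constants so that they depend only on $\beta$ and $d$, and so that the exponent in \eqref{eq:grad-e2} comes out with the precise constant $\tfrac{1}{2}$ in front of $\sqrt{t-\tau}\|\nabla b\|_{\tau\rightarrow t}$, requires careful bookkeeping in the Hölder split and in the Gronwall step; this is the place where the singular weight in the parabolic norm plays an essential role and must be matched against the $\textrm{d}s/\sqrt{s-\tau}$ integrability window.
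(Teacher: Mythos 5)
Your overall strategy coincides with the paper's: the on-diagonal bound comes from the Cameron--Martin representation of $p_{b}$ plus a H\"older/exponential-moment argument with the Gaussian variance enlarged from $t-\tau$ to $\beta(t-\tau)$, and the gradient bound comes from a Bismut-type formula with the flow derivative controlled by Gronwall against the singular weight of the parabolic norm, localized to a half-interval so that \eqref{eq:p-est1} can be fed back in via Chapman--Kolmogorov (this is exactly Theorems \ref{thm:bis-f}, \ref{prop:4.3}, \ref{thm:4.5} and Lemmas \ref{lem:4.4}, \ref{lem:5-1}). However, there is a genuine gap in your treatment of \eqref{eq:p-est1}. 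You bound the $L^{q}$-norm of the Dol\'eans--Dade exponential \emph{under the pinned measure} via $\mathcal{E}(M)^{q}=\mathcal{E}(qM)\exp(\tfrac{q(q-1)}{2}\langle M\rangle)$, which requires $\mathbb{E}^{x,y}_{\tau,t}[\mathcal{E}(qM)]$ to be bounded. Under the bridge measure the canonical process carries the extra unbounded drift $(y-W_{s})/(t-s)$, so $\mathcal{E}(qM)$ is not a martingale there and its bridge expectation is not controlled; indeed for constant $b=v$ one computes from \eqref{eq:rep-01} that $\mathbb{E}^{x,y}[\mathcal{E}(M)]=\exp(v\cdot(y-x)-\tfrac{1}{2}|v|^{2}(t-\tau))$, which is unbounded in $y$ --- this is precisely why the variance must be enlarged and why no uniform bound $Ce^{C\|b\|_{\infty}^{2}(t-\tau)}$ on the bridge expectation can hold (the divergence-free hypothesis does not repair this). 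The paper circumvents the issue by first converting the conditional representation \eqref{eq:rep-01} into the \emph{unconditional} time-integral representation \eqref{eq:rep-05} of Theorem \ref{thm:3.1}, and only then applying H\"older under the Wiener measure, where $U$ genuinely is an exponential martingale; the passage from $G_{t-s}$ to $G_{\beta(t-s)}$ is then carried out by the explicit potential estimates of Lemma \ref{lem:4.2}, not by a H\"older split of the Gaussian. Note also that your exponent condition $p\cdot\frac{1}{t-\tau}=\frac{1}{\beta(t-\tau)}$ forces $p=1/\beta<1$, which is not an admissible H\"older exponent.

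For \eqref{eq:grad-e2} your outline is structurally right, but the constant bookkeeping fails as stated: Gronwall applied to the flow derivative yields $|Z(t)|\leq d\,e^{2\sqrt{t-\tau}\left\Vert \nabla b\right\Vert _{\tau\rightarrow t}}$ (cf.\ \eqref{Z-estimate}), and no ``one more H\"older split'' converts a factor $e^{2\sqrt{t-\tau}\left\Vert \nabla b\right\Vert}$ into $e^{\frac{1}{2}\sqrt{t-\tau}\left\Vert \nabla b\right\Vert}$. In the paper the coefficient $\tfrac{1}{2}$ arises from taking the square root of the time integral of $|Z|^{2}$ in the Burkholder--Davis--Gundy step, combined with the choice $\varepsilon=T/2$ in the truncation function $\rho$ and the reversed-time weight $1/\sqrt{T-s}$ in Lemma \ref{lem:5-1}; that computation needs to be made explicit, since it is where the precise exponent in \eqref{eq:grad-e2} actually comes from.
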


These estimates are quite delicate
to derive: They are obtained
by introducing substantial tools from stochastic analysis, mainly
various functional integral representations for the fundamental solutions
({\it cf.} \citep{QianZhengSPA2004,QianSuliZhang2022}) and a new kind of
Bismut's formulas (cf. \citep{Bismut1984,Bismut-Lebeau2008}), together
with careful and explicit computations. Estimates (\ref{eq:p-est1})--(\ref{eq:grad-e2})
will be proved in \S 4 and \S 5, respectively.

\medskip
2. {\it The second task in our study is to prove the explicit \emph{a priori}
estimates to the iteration $u^{(n)}=V(u^{(n-1)})$ in \eqref{1.11a},
or equivalently, to derive the {\it a priori} estimates of the solutions
of the Cauchy problem \eqref{eq:9-06-1} for the linear parabolic
equations that define the nonlinear mapping $V$, namely the solution
of the Cauchy problem for the parabolic equations:
\begin{equation}\label{eq:vort-linear-01}
\begin{cases}
(\partial_t-L_{-b})w=A(b)w,\\
w(\cdot,0)=\omega_{0},
\end{cases}
\end{equation}
where, as before, $L_{-b}$ denotes the time-dependent elliptic operator
$\frac{1}{2}\Delta-b\cdot\nabla$}.

\smallskip
The crucial observation is that
the term on the right-hand side, $A(b)w$, is a linear zero-order term,
which differs fundamentally from the linearized Navier-Stokes equations.
This crucial difference allows us to apply the Feynman-Kac-type formula,
obtained in this context in
\citep{QianSuliZhang2022},
to derive the necessary explicit \emph{a priori} estimates.

The \emph{a priori} estimates and technical tools are worked out for
a general dimension $d$ and a general vector field $b(x,t)$ that
is divergence-free on $\mathbb{R}^{d}$. Therefore, they have
independent interests and are likely useful for both treating the Navier-Stokes
equations with other boundary conditions and dealing with other
linear/nonlinear PDEs.

\subsection{Organisation of the paper}

In \S 2, several probabilistic structures associated with a time-dependent
vector field $b(x,t)$ are reviewed, and then a functional integration
representation formula for the heat kernel of $\frac{1}{2}\Delta-b\cdot\nabla$
and a Bismut-type formula for the gradient of the heat kernel are established,
which are the tools for deriving the \emph{a priori} estimates
in Theorem \ref{thm:b-est} we need.
In \S 3, several technical potential estimates are established, which will
be used in the proof of Theorem \ref{thm:b-est} that will be carried out
in \S 4--\S 5.
In \S 6,
the linearized
vorticity equations are carefully analyzed, and the main regularity results for the strong
solutions of the Cauchy problem \eqref{t-02}--\eqref{ID}
with periodic initial data \eqref{ID}
for the Navier-Stokes equations \eqref{t-02}
in $\R^3$
will be proved in \S 7.

\section{Probabilistic Tools for the New Approach}

In this section, we first introduce several probabilistic structures associated
with a time-dependent vector field $b(x,t)\in \mathbb{R}^{d}$
that is divergence-free (in the distributional sense), bounded,
and Borel measurable. Then we establish a functional integration representation
formula and a Bismut-type formula for the heat kernel of the corresponding
heterogeneous drift-diffusion operator.

\subsection{Fundamental solutions and diffusions}

Let $\varGamma_{b}(x,t,\xi,\tau)$, for $0\leq\tau<t$ and $\xi,x\in\mathbb{R}^{d}$,
denote the fundamental solution of the forward parabolic operator
$L_{b}-\partial_t$, and let $\varGamma_{b}^{\star}(x,t,\xi,\tau)$,
for $0\leq t<\tau$ and $x,\xi\in\mathbb{R}^{d}$, denote the fundamental solution
of the backward parabolic equation $L_{b}^{\star}+\partial_t$;
see \citep{Friedman 1964} for their definitions and basic constructions.
Since $b(x,t)$ is bounded, $\varGamma_{b}(x,t,\xi,\tau)$ and $\varGamma_{b}^{\star}(x,t,\xi,\tau)$
exist and unique, and
\begin{equation}\label{eq:fund-gama1}
\varGamma_{b}(x,t,\xi,\tau)=\varGamma_{b}^{\star}(\xi,\tau,x,t)
\qquad \mbox{for any $t>\tau\geq0$ and $x,\xi\in\mathbb{R}^{d}$}.
\end{equation}
Moreover, $\varGamma_{b}(x,t,\xi,\tau)$
is positive and continuous in $\tau<t$ and $x,\xi\in\mathbb{R}^{d}$,
and $\varGamma_{b}^{\star}(x,t,\xi,\tau)$ is positive and continuous
in $t<\tau$ and $x,\xi\in\mathbb{R}^{d}$.

Let $p_{b}(\tau,\xi,t,y)$, for $0\leq\tau<t$ and $\xi,y\in\mathbb{R}^{d}$,
denote the transition probability density function of the $L_{b}$-diffusion
({\it cf}. \citep{Ikeda and Watanabe 1989,Stroock and Varadhan 1979,Stroock 2008}).
Since $\nabla\cdot b=0$,
\begin{equation}
p_{b}(\tau,\xi,t,y)=\varGamma_{-b}^{\star}(\xi,\tau,y,t)=\varGamma_{-b}(y,t,\xi,\tau)\label{def-r1}
\qquad\mbox{for $t>\tau\geq0$ and any $\xi,y\in\mathbb{R}^{d}$}.
\end{equation}
 For $T>0$,
$b^{T}(x,t)$ denotes a bounded divergence-free vector field such
that $b^{T}(x,t)$ coincides with $b(x,T-t)$ for all $0\leq t\leq T$
and $x\in\mathbb{R}^{d}$. Then
\begin{align}
&p_{b^{T}}(T-t,y,T-\tau,\xi)=\varGamma_{b}(y,t,\xi,\tau),\label{eq:diff-backT1}\\
&p_{b}(\tau,\xi,t,y)=p_{-b^{T}}(T-t,y,T-\tau,\xi),\label{eq:rev-01}
\end{align}
for all $0\leq\tau<t\leq T$ and $\xi,y\in\mathbb{R}^{d}$.

Let $\varOmega=C([0,\infty),\mathbb{R}^{d})$ be the space of continuous
paths $\varphi:[0,\infty)\rightarrow\mathbb{R}^{d}$, equipped with
the natural filtration denoted by $\mathcal{F}_{t}^{0}$ for $t\geq0$.
For $\tau\geq0$ and $\xi\in\mathbb{R}^{d}$, there is a unique probability
measure $\mathbb{P}_{b}^{\tau,\xi}$ on $\varOmega$ of all continuous
paths such that
\[
\mathbb{P}_{b}^{\tau,\xi}\left[\varphi\in\varOmega\,: \,\varphi(t)=\xi
\,\textrm{ for all }0\leq t\leq\tau\right]=1,
\] 
and the marginal distribution for any finite partition $\tau=t_{0}<t_{1}<\cdots<t_{k}$:
\[
\mathbb{P}_{b}^{\tau,\xi}\left[\varphi(t_{1})\in \dd x_{1},\cdots,\varphi(t_{k})\in \dd x_{k}\right]
\]
is given by
\[
p_{b}(\tau,\xi,t_{1},x_{1})p_{b}(t_{1},x_{1},t_{2},x_{2})\cdots p_{b}(t_{k-1},x_{k-1},t_{k},x_{k})\dd x_{1}\cdots \dd x_{k}.
\]

The collection $\mathbb{P}_{b}^{\tau,\xi}$, for $\tau\geq0$ and $\xi\in\mathbb{R}^{d}$,
is called the diffusion with infinitesimal generator $L_{b}$. The
construction of $L_{b}$-diffusion, or equivalently of $p_{b}(s,x,t,y)$,
can be based on the Cauchy problem for the stochastic differential equation (SDE):
\begin{equation}\label{eq:sde-m1}
\begin{cases}
\textrm{d}X=b(X,t)\textrm{d}t+\textrm{d}B,\\
X_{\tau}=\xi,
\end{cases}
\end{equation}
where $B=(B^{1},\cdots,B^{d})$ is a Brownian motion on a probability
space $(\varOmega,\mathcal{F},\mathbb{P})$
({\it cf.} \citep{Ikeda and Watanabe 1989, Stroock and Varadhan 1979} for the details).
If $b(x,t)$
is jointly continuous and global Lipschitz continuous in $x$ (uniformly
in $t$), then the Cauchy problem (\ref{eq:sde-m1}) has a unique strong solution,
whose distribution gives rise to the probability measure $\mathbb{P}_{b}^{\tau,\xi}$.

\subsection{Diffusion bridges and a Feynman-Kac-type formula}

Let $\tau\geq0$, $T>\tau$, and $\xi,\eta\in\mathbb{R}^{d}$ be fixed.
The conditional law $\mathbb{P}_{b}^{\xi,\tau\rightarrow\eta,T}$,
also called the pinned measure or $L_{b}$-diffusion bridge measure,
is formally defined to be $\mathbb{P}_{b}^{\tau,\xi}\left[\,\,\cdot\,\,|\,w(T)=\eta\right]$,
which is again Markovian. According to (14.1) in \citep{Dellacherie and Meyer Volume D},
$\mathbb{P}_{b}^{\xi,\tau\rightarrow\eta,T}$ is the unique probability
measure on $\varOmega$ with time non-homogeneous transition probability
density function:
\begin{equation}\label{cond-01}
q_{b}(s,x,t,y)=\frac{p_{b}(s,x,t,y)p_{b}(t,y,T,\eta)}{p_{b}(s,x,T,\eta)}
\qquad\,\, \mbox{for $\tau<s<t<T$ and $x,y\in\mathbb{R}^{d}$}.
\end{equation}
It can be shown ({\it cf.} \citep{QianZhengSPA2004}) that
\begin{equation}\label{eq:den-01}
\left.\frac{\textrm{d}\mathbb{P}_{b}^{\xi,\tau\rightarrow\eta,T}}{\textrm{d}\mathbb{P}_{b}^{\xi,\tau}}
\right|_{\mathcal{F}_{s}^{0}}
=\frac{p_{b}(t,\varphi(t),T,\eta)}{p_{b}(\tau,\xi,T,\eta)}
\qquad\,\,\mbox{for $t\in[\tau,T)$},
\end{equation}
where $\varphi(t)$ denotes the general sample point and the canonical process on $\Omega$.
\begin{thm}
\label{theorem3.3}The pinned measures satisfy the following duality
relation{\rm :}
\[
\mathbb{P}_{b}^{\eta,0\rightarrow\zeta,T}=\mathbb{P}_{-b^{T}}^{\zeta,0\rightarrow\eta,T}\circ\tau_{T},
\]
where $\tau_{T}$ is the time reversal operator at $T$, that is,
$\tau_{T}:\varOmega\rightarrow\varOmega$ which sends $w$ to $\tau_{T}w(t)=w(T-t)$
for $t\in[0,T]$.
\end{thm}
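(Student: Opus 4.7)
The plan is to identify both sides as probability measures on $\varOmega$ supported on paths $\varphi$ with $\varphi(0)=\eta$ and $\varphi(T)=\zeta$, and to show that they possess the same finite-dimensional distributions on $[0,T]$. Since, by the discussion preceding the theorem and (\ref{cond-01}), the pinned measure $\mathbb{P}_{b}^{\eta,0\rightarrow\zeta,T}$ is uniquely determined by the family of its marginal densities at partitions of $(0,T)$, matching finite-dimensional distributions suffices. Endpoints match trivially: time-reversing a path that starts at $\zeta$ at time $0$ and ends at $\eta$ at time $T$ produces a path starting at $\eta$ at time $0$ and ending at $\zeta$ at time $T$, consistent with $\mathbb{P}_{b}^{\eta,0\rightarrow\zeta,T}$.

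The computational core is as follows. For any partition $0<t_{1}<\cdots<t_{k}<T$, the definition of the pinned measure gives, for $\mathbb{P}_{b}^{\eta,0\rightarrow\zeta,T}$, the density
\[
\frac{p_{b}(0,\eta,t_{1},x_{1})\,p_{b}(t_{1},x_{1},t_{2},x_{2})\cdots p_{b}(t_{k},x_{k},T,\zeta)}{p_{b}(0,\eta,T,\zeta)}.
\]
For the push-forward $\mathbb{P}_{-b^{T}}^{\zeta,0\rightarrow\eta,T}\circ\tau_{T}$, I would instead evaluate the underlying measure at the reversed partition $s_{i}=T-t_{i}$ (arranged as $0<s_{k}<\cdots<s_{1}<T$), writing out its analogous quotient in terms of $p_{-b^{T}}$.

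The key mechanical step is then to invoke the identity (\ref{eq:rev-01}), which can be rewritten as $p_{-b^{T}}(s,y,t',\xi)=p_{b}(T-t',\xi,T-s,y)$ for $0\leq s<t'\leq T$. Applying this factor by factor to every $p_{-b^{T}}$ appearing in the reversed-partition expression converts each term back into a $p_{b}$-factor with the matching variables, and likewise for the denominator $p_{-b^{T}}(0,\zeta,T,\eta)=p_{b}(0,\eta,T,\zeta)$. The two densities then coincide, so the finite-dimensional marginals agree and the duality follows.

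The main obstacle is really just careful bookkeeping of the time-reversal on the indices, together with verifying continuity of the sample paths is preserved by $\tau_{T}$ (immediate from $\varOmega=C([0,\infty),\mathbb{R}^{d})$) and that both measures concentrate on paths with the prescribed endpoints in a consistent way, so that the standard uniqueness statement for a Markov measure with prescribed transition densities applies. No new analytic estimate is required; everything reduces to the algebraic identity (\ref{eq:rev-01}) already available from the fundamental-solution duality for $L_{b}$ and $L_{b}^{\star}$.
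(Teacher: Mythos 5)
Your argument is correct: telescoping the bridge transition densities (\ref{cond-01}) into the product form, then applying the reversal identity (\ref{eq:rev-01}) factor by factor (including to the normalising denominator $p_{-b^{T}}(0,\zeta,T,\eta)=p_{b}(0,\eta,T,\zeta)$), identifies the finite-dimensional marginals of the two measures, which determines the pinned measures uniquely. The paper states Theorem \ref{theorem3.3} without proof, and your computation is exactly the argument it implicitly relies on; the only point worth flagging is that (\ref{eq:rev-01}) itself is where the divergence-free hypothesis $\nabla\cdot b=0$ enters.
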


Therefore, if $\{X_{t}\}$ is an $L_{b}$-diffusion and $\{Y_{t}\}$ is
an $L_{-b^{T}}$-diffusion on a probability space $(\varOmega,\mathcal{F},\mathbb{P})$,
then
\[
\mathbb{P}\left[F(X_{\cdot})\,|\,X_{0}=\eta,X_{T}=\zeta\right]
=\mathbb{P}\left[F(Y_{T-\cdot})\,|\,Y_{0}=\zeta,Y_{T}=\eta\right]
\]
for any bounded or positive Borel measurable function $F$ on $\varOmega$.

The following is a Feynman-Kac-type formula that was
established in \citep{QianSuliZhang2022} in this version:
\begin{thm}\label{thm:F-K-01}
Suppose that $f(x,t)$ is a strong solution to the parabolic
equations{\rm :}
\begin{equation}
\big(\partial_t-L_{-b(x,t)}\big)f^{i}(x,t)
=A_{j}^{i}(x,t)f^{j}(x,t)
\qquad \mbox{ in $\mathbb{R}^{d}\times[0,\infty)$},\label{eq:S-vot-eq}
\end{equation}
subject to the initial condition{\rm :} $f(x,0)=f_{0}(x)$, and $A_{j}^{i}(x,t)$
are joint continuous in $(x,t)$, Lipschitz continuous in $x$ $($uniformly
in $t$ in any finite interval, where $i,j=1,\ldots,d${\rm )}. Then
\begin{equation}
f(x,t)=\int_{\mathbb{R}^{d}}f_{0}(\xi)p_{b}(0,\xi,t,x)\mathbb{P}_{b}^{\xi,0\rightarrow x,t}\left[Q(0,t)\right]
 \dd\xi,\label{eq:F-K-f01}
\end{equation}
where, for every $t>0$, $Q(s,t,\varphi)=\big(Q_{j}^{i}(s,t,\varphi)\big)$
$($but the sample point $\varphi\in\varOmega$ will be suppressed if
no confusion arises$)$ is the unique solution of the Cauchy problem for the differential
equations{\rm :}
\begin{align*}
\begin{cases}
\frac{\dd}{\dd s}Q_{j}^{i}(s,t,\varphi)=-Q_{k}^{i}(s,t,\varphi)A_{j}^{k}(\varphi(s),s),\\[1mm]
Q_{j}^{i}(t,t,\varphi)=\delta_{j}^{i} \qquad \mbox{for $s\leq t$},
\end{cases}
\end{align*}
for $i,j=1,\ldots,d$, and $\varphi\in\varOmega$.
\end{thm}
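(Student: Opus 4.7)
The plan is to derive (\ref{eq:F-K-f01}) by a standard It\^o martingale argument applied after time-reversing (\ref{eq:S-vot-eq}) into a backward equation for an $L_{-b^{t}}$-diffusion, and then to recast the resulting expectation as an $L_{b}$-bridge via Theorem \ref{theorem3.3}. Fix $t>0$ and $x\in\mathbb{R}^{d}$ and set $g(y,r):=f(y,t-r)$ for $r\in[0,t]$. A direct substitution into (\ref{eq:S-vot-eq}) yields the backward system
\[
\partial_{r}g^{i}+L_{-b^{t}}g^{i}+A_{j}^{i}(\,\cdot\,,t-r)\,g^{j}=0,\qquad g(y,t)=f_{0}(y),
\]
with $b^{t}(y,r)=b(y,t-r)$ as introduced in \S\,2.

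Let $Y$ be the $L_{-b^{t}}$-diffusion on $[0,t]$ with $Y_{0}=x$, built as the strong solution of the SDE analogous to (\ref{eq:sde-m1}) with drift $-b^{t}$, and let $N(r)=\big(N_{j}^{i}(r)\big)$ be the matrix-valued process given by $\frac{\dd N}{\dd r}(r)=N(r)\,A(Y_{r},t-r)$ with $N(0)=I$. It\^o's formula together with the PDE for $g$ gives $\dd g^{i}(Y_{r},r)=-A_{j}^{i}(Y_{r},t-r)\,g^{j}(Y_{r},r)\,\dd r+\nabla g^{i}(Y_{r},r)\cdot \dd B_{r}$, and the product rule then makes the finite-variation part of $\dd\big(N_{k}^{i}(r)\,g^{k}(Y_{r},r)\big)$ cancel, so that $\{N_{k}^{i}(r)\,g^{k}(Y_{r},r)\}_{r\in[0,t]}$ is a local martingale. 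Under the regularity assumed of $f$ and the Lipschitz control on $A$ it is in fact a true martingale (see the last paragraph), and taking expectations at $r=0$ and $r=t$ yields $f^{i}(x,t)=g^{i}(x,0)=\mathbb{P}_{-b^{t}}^{0,x}\big[N_{k}^{i}(t)\,f_{0}^{k}(Y_{t})\big]$. Conditioning on $Y_{t}=\xi$, using the identity $p_{-b^{t}}(0,x,t,\xi)=p_{b}(0,\xi,t,x)$ from (\ref{eq:rev-01}), rewrites this as
\[
f^{i}(x,t)=\int_{\mathbb{R}^{d}}f_{0}^{k}(\xi)\,p_{b}(0,\xi,t,x)\,\mathbb{P}_{-b^{t}}^{x,0\rightarrow\xi,t}\big[N_{k}^{i}(t)\big]\,\dd\xi.
\]
By Theorem \ref{theorem3.3}, $\mathbb{P}_{-b^{t}}^{x,0\rightarrow\xi,t}\big[F(Y)\big]=\mathbb{P}_{b}^{\xi,0\rightarrow x,t}\big[F(\tau_{t}\varphi)\big]$ for every bounded Borel $F$. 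Writing $\psi=\tau_{t}Y$ and $R(s):=N(t-s)$, the chain rule converts the ODE defining $N$ into $\frac{\dd R}{\dd s}(s)=-R(s)\,A(\psi(s),s)$ with $R(t)=I$, which is exactly the Cauchy problem characterising $Q(\,\cdot\,,t,\psi)$; hence $N(t)=R(0)=Q(0,t,\psi)$, and substitution yields precisely (\ref{eq:F-K-f01}).

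The crux of the argument is upgrading the local-martingale identity to a genuine one on $[0,t]$. The hypothesis that $f$ is a strong solution supplies the continuity and local boundedness of $g$ and $\nabla g$ on $\mathbb{R}^{d}\times[0,t]$ needed to apply It\^o's formula; the Lipschitz bound on $A$, together with Gronwall, furnishes a deterministic bound on $|N(r)|$ uniformly in $r\in[0,t]$; and these bounds in turn give the uniform integrability of $N_{k}^{i}g^{k}(Y_{\cdot},\,\cdot\,)$ and of the quadratic variation $\int_{0}^{r}|N(s)\nabla g(Y_{s},s)|^{2}\dd s$, at which point a standard localisation argument closes the proof. Everything else -- the conditioning, the density identity (\ref{eq:rev-01}), and the time-reversed matching of the two matrix ODEs -- is purely algebraic and introduces no further difficulty.
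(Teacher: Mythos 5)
Your argument is correct, and it is worth noting that the paper itself offers no proof of this theorem: it is imported from \citep{QianSuliZhang2022}, so your derivation is a genuine self-contained substitute built entirely from the machinery the paper sets up in \S 2. The route you take --- time-reverse the PDE into a terminal-value problem for the generator $L_{-b^{t}}$, run the dual diffusion $Y$ from $x$, pair $g(Y_r,r)$ with the matrix process $N$ so that the drifts cancel, condition on the endpoint, and then transport the resulting $L_{-b^{t}}$-bridge expectation back to the $L_{b}$-bridge via Theorem \ref{theorem3.3} and the density identity (\ref{eq:rev-01}) --- is the natural probabilistic Feynman--Kac argument, and your bookkeeping of the matrix ODEs under time reversal ($R(s)=N(t-s)$ solving the defining Cauchy problem for $Q(\cdot,t,\psi)$) is exactly right, including the index placement, which matches (\ref{a-22-4}). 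The one place where the write-up is too glib is the true-martingale upgrade: the hypotheses make $A$ Lipschitz in $x$, not bounded, so Gronwall yields $|N(r)|\le C\exp\big(\int_0^r|A(Y_s,t-s)|\,\dd s\big)$, a random quantity with at most exponential-of-linear growth in $\sup_{[0,t]}|Y|$ rather than a deterministic bound; integrability then follows from the Gaussian tails of $\sup_{[0,t]}|Y|$ (the drift $b$ is bounded), and one must likewise assume --- or restrict to the intended application, where $f=w$ is smooth and periodic --- that $g$ and $\nabla g$ are bounded on $\mathbb{R}^d\times[0,t]$, since ``strong solution'' alone does not supply this. With those standard caveats made explicit, the proof is complete.
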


\subsection{Functional integral representation for fundamental solutions}

Let $B$ be a standard Brownian motion of dimension $d$ on a probability
space $(\varOmega,\mathcal{F},\mathbb{P})$. For $\tau\geq0$ and
$\xi\in\mathbb{R}^{d}$,
$$
X_{t}^{\tau,\xi}=\begin{cases}
B_{t}-B_{\tau}+\xi \quad &\mbox{for $t\geq\tau$},\\
\xi \quad &\mbox{for $t\leq\tau$}.
\end{cases}
$$
Then $X^{\tau,\xi}$
is a Brownian motion stated at $\xi$ at the initial time $\tau$.
According to Theorem 6.4.2 in \citep{Stroock and Varadhan 1979},
the $L_{b}$-diffusion may also be constructed
by using the Cameron-Martin density:
\begin{equation}\label{eq:U-1}
U_{b}^{\tau,\xi}(t)=\exp\Big[\int_{\tau\wedge t}^{t}b(X_{s}^{\tau,\xi},s)\cdot\textrm{d}B_{s}
-\frac{1}{2}\int_{\tau\wedge t}^{t}|b|^{2}(X_{s}^{\tau,\xi},s)\,\textrm{d}s\Big].
\end{equation}

\begin{thm}
\label{thm:3.1} The following representation holds for all $t>\tau${\rm :}
\begin{equation}\label{eq:rep-05}
p_{b}(\tau,\xi,t,y)=G_{t-\tau}(y-\xi)
+\int_{\tau}^{t}\mathbb{P}\Big[U_{b}^{\tau,\xi}(s)G_{t-s}(y-X_{s}^{\tau,\xi})
b(X_{s}^{\tau,\xi},s)\cdot\frac{y-X_{s}^{\tau,\xi}}{t-s}\Big]\,\textrm{\rm d}s.
\end{equation}
\end{thm}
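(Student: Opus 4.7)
The plan is to combine the Cameron--Martin--Girsanov representation recalled in (\ref{eq:U-1}) with an application of It\^o's formula to the process $M_s:=U_b^{\tau,\xi}(s)\,G_{t-s}(y-X_s^{\tau,\xi})$, and then pass to the limit $s\uparrow t$.

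First, recall that by Girsanov's theorem (Theorem 6.4.2 of Stroock--Varadhan cited above), for any bounded measurable test function $f$ and any $s\in(\tau,t)$,
\[
\int_{\mathbb{R}^{d}}f(z)\,p_{b}(\tau,\xi,s,z)\,\textrm{d}z=\mathbb{P}\!\left[U_{b}^{\tau,\xi}(s)\,f(X_{s}^{\tau,\xi})\right].
\]
Applied to $f(\cdot)=G_{t-s}(y-\cdot)$, this yields
\[
\mathbb{P}\!\left[U_{b}^{\tau,\xi}(s)\,G_{t-s}(y-X_{s}^{\tau,\xi})\right]=\int_{\mathbb{R}^{d}}G_{t-s}(y-z)\,p_{b}(\tau,\xi,s,z)\,\textrm{d}z.
\]
As $s\uparrow t$, $G_{t-s}$ is an approximate identity, so by continuity of $p_{b}(\tau,\xi,\cdot,\cdot)$ in $(s,y)$ the right-hand side converges to $p_{b}(\tau,\xi,t,y)$. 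At the initial time $s=\tau$, the same identity gives $M_\tau=G_{t-\tau}(y-\xi)$.

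Next, fix $s^\star\in(\tau,t)$ and apply It\^o's formula to $M_s$ on $[\tau,s^\star]$. Since $G_{t-s}(y-x)$ satisfies the backward heat equation $(\partial_{s}+\tfrac{1}{2}\Delta_{x})G_{t-s}(y-x)=0$, and a direct computation gives $\nabla_{x}G_{t-s}(y-x)=\tfrac{y-x}{t-s}G_{t-s}(y-x)$, It\^o's formula applied to the $C^{1,2}$ function $(s,x)\mapsto G_{t-s}(y-x)$ along the Brownian motion $X^{\tau,\xi}$ yields the pure martingale differential
\[
\textrm{d}G_{t-s}(y-X_{s}^{\tau,\xi})=\tfrac{y-X_{s}^{\tau,\xi}}{t-s}\,G_{t-s}(y-X_{s}^{\tau,\xi})\cdot\textrm{d}B_{s}.
\]
Combining with $\textrm{d}U_{b}^{\tau,\xi}(s)=U_{b}^{\tau,\xi}(s)\,b(X_{s}^{\tau,\xi},s)\cdot\textrm{d}B_{s}$ via It\^o's product rule, the two It\^o-martingale differentials produce only a cross-variation bracket contributing a $\textrm{d}s$-term, and
\[
\textrm{d}M_{s}=(\text{local martingale part})+U_{b}^{\tau,\xi}(s)\,G_{t-s}(y-X_{s}^{\tau,\xi})\,b(X_{s}^{\tau,\xi},s)\cdot\tfrac{y-X_{s}^{\tau,\xi}}{t-s}\,\textrm{d}s.
\]
On the compact interval $[\tau,s^\star]$ with $s^\star<t$, the integrands of the stochastic integral are bounded (using boundedness of $b$ and smoothness of $G_{t-s}$ away from $s=t$), so localization arguments show the martingale part has zero expectation, and integrating from $\tau$ to $s^\star$ and taking $\mathbb{P}[\cdot]$ gives
\[
\mathbb{P}[M_{s^\star}]-G_{t-\tau}(y-\xi)=\int_{\tau}^{s^\star}\mathbb{P}\!\left[U_{b}^{\tau,\xi}(s)\,G_{t-s}(y-X_{s}^{\tau,\xi})\,b(X_{s}^{\tau,\xi},s)\cdot\tfrac{y-X_{s}^{\tau,\xi}}{t-s}\right]\textrm{d}s.
\]

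Finally, let $s^\star\uparrow t$. The left-hand side tends to $p_{b}(\tau,\xi,t,y)-G_{t-\tau}(y-\xi)$ by the opening observation. The main obstacle is justifying the limit on the right-hand side, since the integrand has a singularity as $s\to t$ coming from the factor $\tfrac{y-X_s}{t-s}$ and from $G_{t-s}$. To handle this, note that the elementary Gaussian estimate
\[
\left|\tfrac{y-z}{t-s}\,G_{t-s}(y-z)\right|\leq\tfrac{C_\beta}{\sqrt{t-s}}\,G_{\beta(t-s)}(y-z)
\]
holds for any $\beta>1$, so combining with the Cameron--Martin identity and Cauchy--Schwarz (together with the exponential integrability of $U_{b}^{\tau,\xi}(s)$ guaranteed by boundedness of $b$) yields an $L^{1}$-dominating function of the form $C\,(t-s)^{-1/2}G_{\beta(t-s)}(y-\xi)\,\textrm{e}^{C'(t-\tau)\|b\|_\infty^{2}}\|b\|_\infty$ on the whole interval $[\tau,t)$; since $\int_{\tau}^{t}(t-s)^{-1/2}\textrm{d}s<\infty$, dominated convergence applies and delivers identity (\ref{eq:rep-05}).
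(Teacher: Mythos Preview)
Your approach is essentially the paper's: both compute the It\^o product of the Cameron--Martin exponential martingale $U_b^{\tau,\xi}(s)$ with the Gaussian martingale $s\mapsto G_{t-s}(y-X_s^{\tau,\xi})$, and the cross-variation bracket produces exactly the integrand in (\ref{eq:rep-05}). The only cosmetic difference is that the paper first divides through by $G_{t-\tau}(y-\xi)$ and interprets $R(s)=G_{t-s}(y-X_s)/G_{t-\tau}(y-\xi)$ as the Brownian-bridge Radon--Nikodym density, so that $\lim_{s\uparrow t}\mathbb{P}[R(s)U_b(s)]=\mathbb{P}[U_b(t)\mid X_t=y]=p_b/G_{t-\tau}$ via (\ref{eq:rep-01}); you reach the same limit through the approximate-identity argument, which is equivalent.

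There is, however, a gap in your final domination step. The bound you write, $C(t-s)^{-1/2}G_{\beta(t-s)}(y-\xi)$, is not what Cauchy--Schwarz actually delivers: after splitting off $U_b$ in $L^{2}$, the remaining factor $\big(\mathbb{P}\big[(\tfrac{|y-X_s|}{t-s}G_{t-s}(y-X_s))^{2}\big]\big)^{1/2}$ is precisely $\sqrt{I_{1,2}}$ in the notation of Lemma~\ref{lem:4.1}, and by Lemma~\ref{lem:4.2}(i) it carries a singularity of order $(t-s)^{-\frac{d(\beta-1)}{2\beta}-\frac{\alpha}{2}}=(t-s)^{-(d+2)/4}$, which is \emph{not} integrable on $[\tau,t)$ once $d\ge 2$. (Also, the Gaussian factor that emerges lives at scale $\sim t-\tau$, not $t-s$.) The fix is to replace Cauchy--Schwarz by H\"older with conjugate exponents $(\gamma,\beta)$ where $\beta>1$ is taken close enough to $1$ that $\frac{d(\beta-1)}{2\beta}+\frac{1}{2}<1$; this is exactly the device used in Lemma~\ref{lem:4.2}(ii) and in the proof of Theorem~\ref{prop:4.3}. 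The paper's own proof of Theorem~\ref{thm:3.1} in fact glosses over this passage to the limit, so your instinct to justify it explicitly is sound --- you just need the sharper H\"older estimate in place of Cauchy--Schwarz.
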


\begin{proof}
The theorem was established in \citep{QianZhengSPA2004} for the time
homogeneous case, and their method can be developed to be applied to our case.
For the completeness, we outline the proof here.
According to the Cameron-Martin
formula,
\begin{align*}
\int_{\mathbb{R}^{d}}p_{b}(\tau,\xi,t,x)f(x)\,\textrm{d}x
& =\mathbb{P}\big[U_{b}^{\tau,\xi}(t)f(X_{t}^{\tau,\xi})\big]\\
& =\int_{\mathbb{R}^{d}}\mathbb{P}\big[U_{b}^{\tau,\xi}(t)f(x)\big|X_{t}^{\tau,\xi}=x\big]
  \mathbb{P}\big[X_{t}^{\tau,\xi}\in\textrm{d}x\big]\\
& =\int_{\mathbb{R}^{d}}
 \mathbb{P}\big[U_{b}^{\tau,\xi}(t)\big|X_{t}^{\tau,\xi}=x\big]G_{t-\tau}(x-\xi)f(x)\,\textrm{d}x.
\end{align*}
Choose $f(z){\rm d}z=\delta_{x}(\textrm{d}z)$ to obtain
\begin{equation}\label{eq:rep-01}
\frac{p_{b}(\tau,\xi,t,y)}{G_{t-\tau}(y-\xi)}
=\mathbb{P}\big[U_{b}^{\tau,\xi}(t)\big|X_{t}^{\tau,\xi}=y\big].
\end{equation}
Now we notice that both
\[
R(s)=\frac{G_{t-s}(y-X_{s}^{\tau,\xi})}{G_{t-\tau}(y-\xi)}
\]
(see (\ref{eq:den-01})) and $U_{b}^{\tau,\xi}(s)$ are exponential
martingales so that
\begin{align*}
&\textrm{d}R(s)=R(s)\nabla\ln G_{t-s}(y-X_{s}^{\tau,\xi})\cdot\textrm{d}B(s),\qquad R(\tau)=1,\\
&\textrm{d}U_{b}^{\tau,\xi}(s)=U_{b}^{\tau,\xi}(s)b(X_{s}^{\tau,\xi},s)\cdot\textrm{d}B(s),
\qquad\quad\,\,\, U_{b}^{\tau,\xi}(\tau)=1.
\end{align*}
Therefore, integrating by parts, together with (\ref{eq:den-01}), yields
\begin{align*}
\frac{p_{b}(\tau,\xi,t,y)}{G_{t-\tau}(y-\xi)}
& =\lim_{s\uparrow t}\mathbb{P}\big[R(s)U_{b}^{\tau,\xi}(s)\big]\\
 & =1+\mathbb{P}\big[\big\langle R,U_{b}^{\tau,\xi}\big\rangle _{t}\big]\\
 & =1+\frac{1}{G_{t-\tau}(y-\xi)}
 \mathbb{E}\Big[\int_{\tau}^{t}U_{b}^{\tau,\xi}(s)b(X_{s}^{\tau,\xi},s)\cdot\nabla G_{t-s}(y-X_{s}^{\tau,\xi})\,\textrm{d}s\Big],
\end{align*}
which leads to the representation formula in \eqref{eq:rep-05}.
\end{proof}

\subsection{A Bismut-type formula}

From now on, we make a further assumption that $b(x,t)$ is
a time-dependent, bounded, and divergence-free $C^{1}$-vector field.

Using the idea of Bismut ({\it cf}. Chapter 14, \S 14.1 in Bismut-Lebeau \citep{Bismut-Lebeau2008}, and also {\it cf}. Elworthy-Li \citep{Elworthy-Li}),
we first establish a Bismut-type formula for the gradient of the fundamental solution:
\[
\partial_{x^{j}}\ln p_{b}(\tau,\xi,t,x).
\]

Let $\tau\geq0$ and $\xi\in\mathbb{R}^{d}$ be fixed. Bismut's idea
is based on the following observation: Since $b$ is divergence-free, as a function of $t>\tau$
and $x\in\mathbb{R}^{d}$, $p_{b}(\tau,\xi,t,x)$ solves the forward
parabolic equation $(L_{-b}-\partial_t)p_{b}=0$, so that
\begin{equation}\label{eq:q-01}
\big(L_{-b}-\partial_t\big)\ln p_{b}=-\frac{1}{2}\left|\nabla\ln p_{b}\right|^{2}
\qquad \mbox{on $\mathbb{R}^{d}\times(\tau,\infty)$}.
\end{equation}

Let $T>0$. Consider $f(x,t)=\ln p_{b}(\tau,\xi,\tau+T-t,x)$ for
$0\leq t<T$. Then
\begin{equation}
\big(L_{-b}+ \partial_t\big)f=-\frac{1}{2}\left|\nabla f\right|^{2},\label{eq:f-ee}
\end{equation}
where both sides are evaluated at $(\tau,\xi,\tau+T-t,x)$.
Now we consider the following Cauchy problem of the stochastic differential equations:
\begin{equation}\label{eq:Y-sde1}
\begin{cases}
\textrm{d}Y_{t}^{x}=\textrm{d}B_{t}-b(Y_{t}^{x},T+\tau-t)\,\textrm{d}t,\\
Y_{0}^{x}=x,
\end{cases}
\end{equation}
which determines a diffusion process with generator $L_{-b^{T+\tau}}$, where $B$
is a standard $d$-dimensional Brownian motion on a probability space
$(\varOmega,\mathcal{F},\mathbb{P})$, whose filtration is denoted
by $(\mathcal{F}_{t}^{0})_{t\geq0}$.
Then \eqref{eq:f-ee} implies that
\begin{equation}\label{eq:R-d1}
R_{t}^{x}=\frac{p_{b}(\tau,\xi,T+\tau-t,Y_{t}^{x})}{p_{b}(\tau,\xi,T+\tau,x)}
=\textrm{e}^{f(Y_{t}^{x},t)-f(x,0)}\qquad\mbox{for $t\in[0,T)$}
\end{equation}
 is a positive martingale and
\begin{equation}\label{eq:R-sde-01}
\begin{cases}
\textrm{d}R_{t}^{x}=R_{t}^{x}\nabla f(Y_{t}^{x},t)\cdot\textrm{d}B_{t},\\
R_{0}^{x}=1.
\end{cases}
\end{equation}
Finally, we set $Z_{j}^{k}(x,t)=\partial_{x^{j}}Y_{t}^{x,k}$  for $j,k=1,\cdots,d$.

\begin{thm}[Bismut-Type Formula for the Forward Variable]\label{thm:bis-f}
Suppose that $b(x,t)$ is bounded and $C^{1}$ with bounded derivative over any finite interval
so that $\nabla\cdot b=0$. Then
\begin{equation}\label{eq:bis-01}
\partial_{x^j}\ln p_{b}(\tau,\xi,T+\tau,x)
=\mathbb{Q}\Big[\int_{0}^{T}\frac{\rho'(t)}{\rho(T)}Z_{j}^{k}(x,t)\,\textrm{d}B_{t}^{k}\Big],
\end{equation}
where $\mathbb{Q}$ is the probability measure on $(\varOmega,\mathcal{F}_{T}^{0})$
such that
\[
\left.\frac{\textrm{d}\mathbb{Q}}{\textrm{d}\mathbb{P}}\right|_{\mathcal{F}_{t}^{0}}=R_{t}^{x}
\qquad\quad \mbox{for $t\in[0,T)$},
\]
and $\rho(t)$ is any continuous and piecewise differentiable
function with $\rho(0)=0$ and $\rho(t)>0$ for $t>0$.
\end{thm}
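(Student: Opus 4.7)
The plan is to exploit the PDE (\ref{eq:q-01}) to turn $R_t^x$ into an exponential martingale, apply Girsanov to pass to $\mathbb{Q}$, and then invoke a time-integration trick (with the weight $\rho'/\rho(T)$) to convert the drift $\nabla_y f$ that would otherwise appear into a pure stochastic integral against $B$. Concretely, set $f(y,t)=\ln p_b(\tau,\xi,T+\tau-t,y)$. Applying It\^o's formula along $Y_t^x$, using the SDE (\ref{eq:Y-sde1}) and the PDE (\ref{eq:f-ee}), one obtains that the deterministic-drift terms cancel and
\[
\textrm{d}[f(Y_t^x,t)] = -\tfrac12 |\nabla_y f(Y_t^x,t)|^2\,\textrm{d}t + \nabla_y f(Y_t^x,t)\cdot \textrm{d}B_t,
\]
which is exactly the exponential SDE (\ref{eq:R-sde-01}) for $R_t^x$. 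Hence, on every interval $[0,t]$ with $t<T$, $R_t^x$ is a $\mathbb{P}$-martingale, $\mathbb{Q}$ is a probability measure, and $\tilde{B}_t=B_t-\int_0^t\nabla_y f(Y_s^x,s)\,\textrm{d}s$ is a $\mathbb{Q}$-Brownian motion.

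Next I would differentiate the pointwise identity $f(x,0)=f(Y_t^x,t)-\ln R_t^x$ in the parameter $x^j$. Using $\partial_{x^j}Y_t^{x,k}=Z_j^k(x,t)$ and the chain rule gives, for each fixed $t\in[0,T)$,
\[
\partial_{x^j}f(x,0) = \nabla_y f(Y_t^x,t)\cdot Z_j(x,t) - \partial_{x^j}\ln R_t^x.
\]
The key cancellation comes from the normalization $\mathbb{P}[R_t^x]=1$: differentiating this in $x^j$ (justified by the smoothness and boundedness of $b$, $\nabla b$, and the Gaussian upper bounds already available on $p_b$) yields $\mathbb{P}[R_t^x\,\partial_{x^j}\ln R_t^x]=0$, i.e.\ $\mathbb{Q}[\partial_{x^j}\ln R_t^x]=0$. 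Taking $\mathbb{Q}$-expectation therefore gives the pointwise-in-$t$ formula
\[
\partial_{x^j}f(x,0) = \mathbb{Q}\bigl[\nabla_y f(Y_t^x,t)\cdot Z_j(x,t)\bigr] \qquad \text{for every } t\in[0,T).
\]

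Now comes Bismut's weighting trick. Multiply by $\rho'(t)/\rho(T)$ and integrate over $[0,T]$; since $\rho(0)=0$ the left-hand side reproduces $\partial_{x^j}f(x,0)$, so
\[
\partial_{x^j}\ln p_b(\tau,\xi,T+\tau,x) = \mathbb{Q}\Bigl[\int_0^T \tfrac{\rho'(t)}{\rho(T)}\, \nabla_y f(Y_t^x,t)\cdot Z_j(x,t)\,\textrm{d}t\Bigr].
\]
Since $B_t-\tilde{B}_t=\int_0^t\nabla_y f(Y_s^x,s)\,\textrm{d}s$ is of bounded variation, the Lebesgue integral above equals $\int_0^T \frac{\rho'(t)}{\rho(T)}Z_j^k(x,t)\,\textrm{d}B_t^k - \int_0^T \frac{\rho'(t)}{\rho(T)}Z_j^k(x,t)\,\textrm{d}\tilde{B}_t^k$. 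The second piece is a $\mathbb{Q}$-martingale starting at $0$ (its quadratic variation involves $\rho'$ and $Z$, both controlled using boundedness of $\nabla b$, which forces $\|Z_j(x,\cdot)\|$ to grow at most exponentially), so it has vanishing $\mathbb{Q}$-expectation, leaving exactly (\ref{eq:bis-01}).

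The main technical obstacle will be the rigorous justification of the parameter differentiation $\partial_{x^j}\mathbb{P}[R_t^x]=\mathbb{P}[\partial_{x^j}R_t^x]$ and of the limiting behavior as $t\uparrow T$, where $p_b(\tau,\xi,T+\tau-t,\cdot)$ concentrates on $\xi$; it is precisely to absorb this singular boundary behavior that the weight $\rho'(t)/\rho(T)$ (with the freedom to choose $\rho$) is introduced, and that freedom will later be exploited in \S 5 to optimize the constants $C_1,C_2$ appearing in the gradient estimate (\ref{eq:grad-e2}). All other ingredients (Girsanov, exponential martingale property of $R_t^x$, the ODE for $Z$) follow standard It\^o calculus under the $C^1$ and boundedness hypotheses on $b$.
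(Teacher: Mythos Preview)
Your proposal is correct and follows essentially the same route as the paper: both obtain the pointwise-in-$t$ identity $\partial_{x^j}f(x,0)=\mathbb{Q}\bigl[\nabla_y f(Y_t^x,t)\cdot Z_j(x,t)\bigr]$, weight it by $\rho'(t)/\rho(T)$, and integrate over $[0,T]$. The only cosmetic difference is in the final step: the paper writes it as the It\^o-product identity $\mathbb{P}\bigl[\langle R,M^j\rangle_T\bigr]=\mathbb{P}\bigl[R_T^x M_T^j\bigr]$ with $M_t^j=\int_0^t\rho'(s)Z_j^k\,\textrm{d}B_s^k$, whereas you pass to the $\mathbb{Q}$-Brownian motion $\tilde B$ via Girsanov and kill the $\tilde B$-integral; these are two equivalent phrasings of the same computation.
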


\begin{proof}
Since $t\rightarrow R_{t}^{x}$ is a martingale so that
\[
p_{b}(\tau,\xi,T+\tau,x)=\mathbb{P}\left[p_{b}(\tau,\xi,T+\tau-t,Y_{t}^{x})\right],
\]
where $\mathbb{P}$ (similarly for $\mathbb{Q}$) also means taking expectation with respect to $\mathbb{P}$ (resp. $\mathbb{Q}$). By using the previous fact,
\begin{align*}
\partial_{x^{j}}\ln p_{b}(\tau,\xi,T+\tau,x)
& =\frac{\partial_{x^{j}}p_{b}(\tau,\xi,T+\tau,x)}{p_{p}(\tau,\xi,T+\tau,x)}\\
 & =\frac{\mathbb{P}\left[\partial_{x^{j}}p_{b}(\tau,\xi,T+\tau-t,Y_{t}^{x})\right]}{p_{p}(\tau,\xi,T+\tau,x)}\\
 & =\mathbb{P}\Big[\frac{p_{b}(\tau,\xi,T+\tau-t,Y_{t}^{x})}{p_{p}(\tau,\xi,T+\tau,x)}\partial_{x^{j}}\ln p_{b}(\tau,\xi,T+\tau-t,Y_{t}^{x})\Big]\\
 & =\mathbb{P}\big[R_{t}^{x}Z_{j}^{k}(x,t)
 \partial_{x^j}\ln p_{b}(\tau,\xi,T+\tau-t,Y_{t}^{x})\big]
\qquad\,\,\, \mbox{for all $0<t<T$},
\end{align*}
where $Z_{j}^{k}(x,t)=\partial_{x^{j}}Y_{t}^{x,k}$
for $j,k=1,\cdots,d$. Let
\[
M_{t}^{j}=\int_{0}^{t}\rho'(s)Z_{j}^{k}(s)\,\textrm{d}B_{s}^{k}
\qquad\,\,\mbox{for $0<t<T$}.
\]
Observe that
\[
\left\langle R,M^{j}\right\rangle _{t}
=\int_{0}^{t}\rho'(s)Z_{j}^{k}(s)R_{s} \partial_{x^{k}}\ln p_{b}(\tau,\xi,T+\tau-s,Y_{s})\,\textrm{d}s.
\]
Thus we have
\begin{align*}
\partial_{x^{j}}\ln p_{b}(\tau,\xi,T+\tau,x)
& =\frac{1}{\rho(t)}\mathbb{P}\Big[\int_{0}^{T}\rho'(s)Z_{j}^{k}(s)R_{s}
\partial_{x^{k}}\ln p_{b}(\tau,\xi,T+\tau-s,Y_{s})\,\textrm{d}s\Big]\\
 & =\frac{1}{\rho(T)}\mathbb{P}\left[\left\langle R^{x},M^{j}\right\rangle _{T}\right]\\
 & =\frac{1}{\rho(T)}\mathbb{P}\Big[R_{T}^{x}\int_{0}^{T}\rho'(t)Z_{j}^{k}(t)\,\textrm{d}B_{t}^{k}\Big].
\end{align*}
This completes the proof.
\end{proof}

\section{Basic Estimates for the Heat Kernel}
In this section, we establish several estimates for the heat kernel.
In particular, Lemma \ref{lem:4.2} contains the technical estimates needed for the proof of
Theorem \ref{prop:4.3} below.

Recall that $G_{t}(x)=(2\pi t)^{-\frac{d}{2}}\exp(-\frac{|x|^{2}}{2t})$
for $x\in\mathbb{R}^{d}$ and $t>0$. Clearly,
\begin{equation}\label{re-G3}
\nabla_{x}\ln G_{t-s}(y-x)=\frac{y-x}{t-s} \qquad\,\, \mbox{for $t>s\geq0$},
\end{equation}
 which leads to the following equality:
\begin{equation}\label{re-D4}
|\sqrt{t-s}\nabla_{x}G_{t-s}(y-x)|=\frac{|y-x|}{\sqrt{t-s}}G_{t-s}(y-x).
\end{equation}
Now we notice that, for $\beta>0$,
\begin{equation}
G_{t-s}(y-x)=\beta^{\frac{d}{2}}e^{-\frac{\beta-1}{\beta}\frac{|y-x|^{2}}{2(t-s)}}G_{\beta(t-s)}(y-x),\label{re-G6}
\end{equation}
so that, for $\beta>1$,
\begin{align*}
\frac{|y-x|}{\sqrt{t-s}}G_{t-s}(y-x)
& =\beta^{\frac{d}{2}}\frac{|y-x|}{\sqrt{t-s}}e^{-\frac{\beta-1}{\beta}\frac{|y-x|^{2}}{2(t-s)}}G_{\beta(t-s)}(y-x)\\
& =\beta^{\frac{d}{2}}\frac{|y-x|}{\sqrt{t-s}}\frac{1}{1+\frac{\beta-1}{\beta}\frac{|y-x|^{2}}{2(t-s)}+\cdots}G_{\beta(t-s)}(y-x)\\
& \leq\frac{\beta^{\frac{d+1}{2}}}{\sqrt{2(\beta-1)}}G_{\beta(t-s)}(y-x).
\end{align*}
Therefore, using \eqref{re-D4} and the same argument,
we have the following lemma which will be used in our computations later.
\begin{lem}
For the heat kernel in $\mathbb{\mathbb{R}}^{d}$,  the following estimates
hold{\rm :}
\begin{enumerate}
\item[\rm (i)] For $\beta>1$,
\begin{equation}
|\sqrt{t-s}\nabla_{x}G_{t-s}(y-x)|\leq\frac{\beta^{\frac{d+1}{2}}}{\sqrt{2(\beta-1)}}G_{\beta(t-s)}(y-x)\label{model-der-e1}  \qquad\mbox{for all $t>s\geq0$ and $x,y\in\mathbb{R}^{d}$}.
\end{equation}
\item[\rm (ii)]
For $\alpha\geq0$ and $\beta>1$,
\begin{equation}\label{SDE-Gest1}
\Big(\frac{|y-x|}{\sqrt{t-s}}\Big)^{\alpha}G_{t-s}(y-x)
\leq\beta^{\frac{d}{2}}k!\Big(\frac{2\beta}{\beta-1}\Big)^{k}G_{\beta(t-s)}(y-x)
\qquad\mbox{for all $t>s\geq0$ and $x,y\in\mathbb{R}^{d}$},
\end{equation}
where $k=\left[\frac{\alpha}{2}\right]+1$
if $\frac{\alpha}{2}$ is not an integer, and $k=\frac{\alpha}{2}$
otherwise.
\end{enumerate}
\end{lem}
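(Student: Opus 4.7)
The plan is to reduce both inequalities to pointwise estimates of the scalar function $r\mapsto r^{\alpha}e^{-cr^{2}}$ on $[0,\infty)$, where I shall write $r=|y-x|/\sqrt{t-s}$ and $c=(\beta-1)/(2\beta)$. The Gaussian rescaling identity \eqref{re-G6} then automatically transfers any such scalar bound into the desired ratio of $G_{t-s}$ to $G_{\beta(t-s)}$.

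For part (i), the computation displayed immediately before the lemma already carries out everything except the final optimisation step. Combining \eqref{re-D4} with \eqref{re-G6} rewrites the left-hand side of \eqref{model-der-e1} as $\beta^{d/2}\,r\,e^{-cr^{2}}\,G_{\beta(t-s)}(y-x)$. Using the elementary bounds $e^{cr^{2}}\geq 1+cr^{2}\geq 2\sqrt{c}\,r$ (first-order Taylor lower bound followed by AM--GM), one obtains $r\,e^{-cr^{2}}\leq 1/(2\sqrt{c})=\sqrt{\beta/(2(\beta-1))}$, and multiplication by $\beta^{d/2}$ produces the stated constant $\beta^{(d+1)/2}/\sqrt{2(\beta-1)}$.

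For part (ii), the same rescaling reduces the claim to the scalar inequality
\begin{equation*}
r^{\alpha}\,e^{-cr^{2}}\;\leq\;k!\,\Bigl(\frac{1}{c}\Bigr)^{k}=k!\,\Bigl(\frac{2\beta}{\beta-1}\Bigr)^{k}\qquad\text{for all }r\geq 0,
\end{equation*}
where $k$ is chosen as in the statement so that $2k\geq\alpha$. The key observation is to keep a single term in the Taylor expansion of $e^{cr^{2}}$: since $e^{cr^{2}}=\sum_{n\geq 0}(cr^{2})^{n}/n!\geq(cr^{2})^{k}/k!$, the universal bound $r^{2k}\,e^{-cr^{2}}\leq k!/c^{k}$ holds for every $r\geq 0$. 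To cover general $\alpha\leq 2k$, I split on the size of $r$: for $r\geq 1$ one has $r^{\alpha}\leq r^{2k}$ and the previous bound applies directly; for $r\leq 1$ one has $r^{\alpha}e^{-cr^{2}}\leq 1$, and the required inequality $1\leq k!/c^{k}$ holds because $c<1/2$ gives $1/c>2$ whenever $k\geq 1$, while the border case $k=0$ (which forces $\alpha=0$) reduces to the trivial equality $1\leq 1$.

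I expect no substantive obstacle: both parts reduce to a one-variable estimate and then to the arithmetic check that the stated $k$ (namely $k=[\alpha/2]+1$ when $\alpha/2\notin\mathbb{Z}$ and $k=\alpha/2$ otherwise) satisfies $2k\geq\alpha$, which is immediate from the definition.
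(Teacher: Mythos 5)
Your proof is correct and follows essentially the same route as the paper: the paper derives (i) by exactly the rescaling identity \eqref{re-G6} plus the lower bound $1+cr^{2}\geq 2\sqrt{c}\,r$ hidden in its series expansion, and disposes of (ii) with the remark ``the same argument,'' which your single-term Taylor bound $e^{cr^{2}}\geq(cr^{2})^{k}/k!$ together with the split on $r\lessgtr 1$ makes explicit. The arithmetic of the constants and the check that $2k\geq\alpha$ are all in order.
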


\begin{lem}\label{lem:4.1}
Let $B$ be an $\mathbb{R}^{d}$-Brownian motion
on a probability space $(\varOmega,\mathcal{F},\mathbb{P})$. Consider
\begin{equation}\label{eq:I-a-b}
I_{\alpha,\beta}(\tau,x,s,t,y)
=\mathbb{P}\Big[\Big(\big|\frac{y-X_{s}}{t-s}\big|^{\alpha}G_{t-s}(y-X_{s})\Big)^{\beta}\Big]
\qquad\mbox{for $t>s>\tau$ and $x,y\in\mathbb{R}^{d}$},
\end{equation}
where $\alpha\geq0$,
$\beta>0$ and $X_{s}=B_{s}-B_{\tau}+x$. Then
\begin{equation}
I_{\alpha,\beta}(\tau,x,s,t,y)
=C_{1} t_{1}^{-(\frac{d(\beta-1)}{2}+\alpha\beta)} G_{t_{1}+t_{2}}(y-x)
\int_{\mathbb{R}^{d}}
\Big|\sqrt{\frac{t_{1}t_{2}}{t_{1}+t_{2}}}z+\frac{t_{1}(x-y)}{t_{1}+t_{2}}\Big|^{\alpha\beta}
G_{1}(z)\,{\rm d} z,\label{eq:I-a-b-1}
\end{equation}
where
\begin{equation}\label{eq:tonettwo}
t_{1}=\beta^{-1}(t-s),\quad t_{2}=s-\tau,
\end{equation}
and
\begin{equation}\label{eq:C_1}
C_{1}=
(2\pi)^{-\frac{d(\beta-1)}{2}}\beta^{-(\frac{d}{2}+\alpha)\beta}
\end{equation}
depend only on $\beta>0$ and $d$.
\end{lem}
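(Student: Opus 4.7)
The proof is a direct Gaussian computation; no probabilistic tool beyond the explicit law of $X_s$ is required. I would begin by unpacking the expectation: since $X_s - x = B_s - B_\tau$ is centred Gaussian with covariance $(s-\tau)I_d$, the quantity $I_{\alpha,\beta}$ becomes the integral
\begin{equation*}
I_{\alpha,\beta}(\tau,x,s,t,y)
=\int_{\mathbb{R}^{d}}
\Big(\frac{|y-z|}{t-s}\Big)^{\alpha\beta}
G_{t-s}(y-z)^{\beta}\,G_{s-\tau}(z-x)\,\dd z.
\end{equation*}
The plan is to isolate the non-Gaussian factor $|y-z|^{\alpha\beta}$ from a genuine product of Gaussian densities in $z$, then use the classical identity that a product of two Gaussians is a Gaussian times a Gaussian to reduce the integral to a one-dimensional substitution.

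Concretely, I would rewrite $G_{t-s}(y-z)^{\beta}$. Since
\begin{equation*}
G_{t-s}(y-z)^{\beta}
=(2\pi(t-s))^{-d(\beta-1)/2}\,\beta^{-d/2}\,G_{(t-s)/\beta}(y-z),
\end{equation*}
the integrand equals $C_0\,G_{t_1}(y-z)G_{t_2}(z-x)$ times $|y-z|^{\alpha\beta}/(t-s)^{\alpha\beta}$, where $t_1=(t-s)/\beta$, $t_2=s-\tau$, and $C_0=(2\pi(t-s))^{-d(\beta-1)/2}\beta^{-d/2}$. Next I would use the well-known splitting
\begin{equation*}
G_{t_{1}}(y-z)\,G_{t_{2}}(z-x)
=G_{t_{1}+t_{2}}(y-x)\,G_{\frac{t_{1}t_{2}}{t_{1}+t_{2}}}\!\Big(z-\frac{t_{2}y+t_{1}x}{t_{1}+t_{2}}\Big),
\end{equation*}
which is the standard factorisation of the joint density of two independent Gaussians into the density of their sum and the conditional density of $y-z$ given the sum. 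Pulling $G_{t_1+t_2}(y-x)$ out of the integral leaves a Gaussian expectation of $|y-z|^{\alpha\beta}$ with respect to the conditional law.

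Under the substitution $z=\frac{t_{2}y+t_{1}x}{t_{1}+t_{2}}-\sqrt{\frac{t_{1}t_{2}}{t_{1}+t_{2}}}\,\zeta$ with $\zeta$ standard Gaussian, one computes
\begin{equation*}
y-z=\frac{t_{1}(y-x)}{t_{1}+t_{2}}+\sqrt{\frac{t_{1}t_{2}}{t_{1}+t_{2}}}\,\zeta,
\end{equation*}
and, since $|\,\cdot\,|$ is invariant under $\zeta\mapsto-\zeta$, the remaining integral takes exactly the form stated with $t_{1}(x-y)/(t_{1}+t_{2})$ inside the norm. Combining the prefactors $C_{0}$, the dilation $(t-s)^{-\alpha\beta}=\beta^{-\alpha\beta}t_{1}^{-\alpha\beta}$, and the power $(t-s)^{-d(\beta-1)/2}=\beta^{-d(\beta-1)/2}t_{1}^{-d(\beta-1)/2}$ gives the overall power $t_{1}^{-(d(\beta-1)/2+\alpha\beta)}$ together with the constant $C_{1}=(2\pi)^{-d(\beta-1)/2}\beta^{-(d/2+\alpha)\beta}$ of \eqref{eq:C_1}.

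The step that most invites arithmetic error is collecting the normalising constants when both $\beta$-powers of $2\pi(t-s)$ and the dilation from $t-s$ to $t_{1}=(t-s)/\beta$ are simultaneously manipulated. I would therefore keep $t_{1}$, $t_{2}$, and $\beta$ symbolic throughout and verify the exponent of $\beta$ at the end by matching independently the powers of $2\pi$, of $t_{1}$, and of $\beta$ on both sides of \eqref{eq:I-a-b-1}. No inequality or asymptotic estimate is needed, so the proof is purely algebraic once the Gaussian splitting is set up.
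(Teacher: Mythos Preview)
Your proposal is correct and follows essentially the same route as the paper's proof: write the expectation as a $\dd z$--integral using the Gaussian law of $X_s$, collapse $G_{t-s}(y-z)^\beta$ to a constant times $G_{t_1}(y-z)$ with $t_1=(t-s)/\beta$, apply the Gaussian product identity $G_{t_1}(z-y)G_{t_2}(z-x)=G_{t_1+t_2}(y-x)\,G_{\frac{t_1t_2}{t_1+t_2}}\!\big(z-\frac{t_1x+t_2y}{t_1+t_2}\big)$, and then change variables to a standard normal to obtain \eqref{eq:I-a-b-1}. The bookkeeping of the constants and the symmetry remark handling the sign $(x-y)$ versus $(y-x)$ are both accurate.
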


\begin{proof}
The proof follows from an elementary computation. First notice that
\begin{equation}\label{Normal-prod01}
G_{t_{1}}(z-y)G_{t_{2}}(z-x)
=G_{\frac{t_{1}t_{2}}{t_{1}+t_{2}}}(z-\frac{t_{1}x+t_{2}y}{t_{1}+t_{2}})G_{t_{1}+t_{2}}(y-x)
\qquad\mbox{for $t_{1}>0$ and $t_{2}>0$}.
\end{equation}
Now, for $s>\tau$, the law of $X_{s}$
is normal with PDF $G_{s-\tau}(z-x)$, and
\[
\big(G_{t-s}(y-z)\big)^\beta
=\beta^{-\frac{d\beta}{2}}(2\pi)^{-\frac{d(\beta-1)}{2}}t_{1}^{-\frac{d(\beta-1)}{2}}
G_{t_{1}}(z-y),
\]
where $t_{1}=\beta^{-1}(t-s)$, so that
\[
I_{\alpha,\beta}(\tau,x,s,t,y)
=\beta^{-(\frac{d}{2}+\alpha)\beta}(2\pi)^{-\frac{d(\beta-1)}{2}}t_{1}^{-(\frac{d(\beta-1)}{2}+\alpha\beta)}
\int_{\mathbb{R}^{d}}\left|z-y\right|^{\alpha\beta}G_{t_{1}}(z-y)G_{t_{2}}(z-x)\,\textrm{d}z.
\]
Then
\begin{align*}
I_{\alpha,\beta}(\tau,x,s,t,y) & =\frac{G_{t_{1}+t_{2}}(y-x)}{(2\pi)^{\frac{d(\beta-1)}{2}}\beta^{(\frac{d}{2}+\alpha)\beta}t_{1}^{\frac{d\beta-d}{2}+\alpha\beta}}\int_{\mathbb{R}^{d}}\left|z-y\right|^{\alpha\beta}
 G_{\frac{t_{1}t_{2}}{t_{1}+t_{2}}}(z-\frac{t_{1}x+t_{2}y}{t_{1}+t_{2}})\,\textrm{d}z\\
 & =\frac{G_{t_{1}+t_{2}}(y-x)}{(2\pi)^{\frac{d(\beta-1)}{2}}\beta^{\frac{d\beta}{2}+\alpha\beta}t_{1}^{\frac{d(\beta-1)}{2}+\alpha\beta}}\int_{\mathbb{R}^{d}}
 \Big|\sqrt{\frac{t_{1}t_{2}}{t_{1}+t_{2}}}z
  +\frac{t_{1}(x-y)}{t_{1}+t_{2}}\Big|^{\alpha\beta}G_{1}(z)\,\textrm{d}z,
\end{align*}
where the second step follows by changing the variable in the integral,
which leads to \eqref{eq:I-a-b-1}.
\end{proof}

\begin{lem}\label{lem:4.2}
Let $\alpha\ge 0$ and $\beta\geq1$.
\begin{enumerate}
\item[\rm (i)] For $t>s>\tau\geq0$,
\begin{align}\label{eq:I-a-b-root}
\sqrt[\beta]{I_{\alpha,\beta}(\tau,x,s,t,y)}
\leq C_{3}(t-\tau)^{\frac{d(\beta-1)}{2\beta}-\frac{\alpha}{2}}
(t-s)^{-\frac{d(\beta-1)}{2\beta}}\Big[\kappa_{1}
\big(\frac{s-\tau}{t-s}\big)^{\frac{\alpha}{2}}+\big(\frac{|x-y|}{\sqrt{t-\tau}}\big)^{\alpha}\Big]
G_{\beta(t-\tau)}(y-x),
\end{align}
where
\textup{
\begin{equation}
\kappa_{1}
=\sqrt[\beta]{\int_{\mathbb{R}^{d}}\left|z\right|^{\alpha\beta}G_{1}\left(z\right)\,\textrm{d}z},
\qquad
C_{3}=\beta^{\frac{2d+\alpha}{2}+\frac{d(\beta-1)}{2\beta}+\frac{\alpha}{2}}
 (2\pi)^{\frac{d(\beta-1)}{2\beta}}C_{1}^{\frac{1}{\beta}},\label{eq:k C-3}
\end{equation}
}
where $C_1$ given in (\ref{eq:C_1}) depends only on $\alpha\geq0$, $\beta\geq1$, and $d$.

\item[\rm (ii)] If $\alpha\geq0$ and $\beta\geq1$
such that $\frac{d(\beta-1)}{2\beta}+\frac{\alpha}{2}<1$, then
\begin{align}\label{eq:integ-01}
\int_{\tau}^{t}\sqrt[\beta]{I_{\alpha,\beta}(\tau,x,s,t,y)}\,\textrm{d}s
\leq C_{4}(t-\tau)^{1-\frac{\alpha}{2}}G_{\beta(t_{1}+t_{2})}(y-x)
\Big(1+\frac{|x-y|^\alpha}{(t-\tau)^{\frac{\alpha}{2}}}\Big)
\qquad\mbox{for $t>\tau\geq0$}, 
\end{align}
where $C_{4}=\max\left(\kappa_{3},\kappa_{0}\right)C_{3}$
depends only on $\beta,\alpha$, and $d$ $($ note that $\kappa_{3},\kappa_{0}$ and $C_3$ can be traced in the proof below$)$.
\end{enumerate}
\end{lem}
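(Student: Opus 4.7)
\emph{Part (i).} The plan is to start from the exact identity \eqref{eq:I-a-b-1} of Lemma \ref{lem:4.1}, raise both sides to the power $1/\beta$, and estimate the Gaussian integral on the right. To handle that integral I apply the elementary inequality $|a+b|^{r}\le C_{r}(|a|^{r}+|b|^{r})$ with $r=\alpha\beta$ (where $C_{r}=2^{r-1}$ if $r\ge 1$ and $C_{r}=1$ otherwise), splitting
\[
\int_{\mathbb{R}^{d}}\Big|\sqrt{\tfrac{t_{1}t_{2}}{t_{1}+t_{2}}}\,z+\tfrac{t_{1}(x-y)}{t_{1}+t_{2}}\Big|^{\alpha\beta}G_{1}(z)\,\dd z
\le C_{r}\Big[\Big(\tfrac{t_{1}t_{2}}{t_{1}+t_{2}}\Big)^{\alpha\beta/2}\kappa_{1}^{\beta}+\Big(\tfrac{t_{1}|x-y|}{t_{1}+t_{2}}\Big)^{\alpha\beta}\Big],
\]
and then invoking $(A+B)^{1/\beta}\le A^{1/\beta}+B^{1/\beta}$ (valid for $\beta\ge 1$) to produce the two summands already present in \eqref{eq:I-a-b-root}. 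Next I rewrite $G_{t_{1}+t_{2}}(y-x)^{1/\beta}=\beta^{d/2}(2\pi)^{d(\beta-1)/(2\beta)}(t_{1}+t_{2})^{d(\beta-1)/(2\beta)}G_{\beta(t_{1}+t_{2})}(y-x)$ directly from the Gaussian density, and dominate $G_{\beta(t_{1}+t_{2})}(y-x)\le\beta^{d/2}G_{\beta(t-\tau)}(y-x)$: the key geometric fact is the two-sided bound $\beta^{-1}(t-\tau)\le t_{1}+t_{2}\le t-\tau$, under which the Gaussian exponent does not increase (upper bound on $t_{1}+t_{2}$) while the prefactor loses at most $\beta^{d/2}$ (lower bound). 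Substituting $t_{1}=\beta^{-1}(t-s)$, $t_{2}=s-\tau$, and replacing each leftover $(t_{1}+t_{2})^{e}$ by $\beta^{|e|}(t-\tau)^{e}$, the exponents of $(t-s)$, $(s-\tau)$ and $(t-\tau)$ line up with those in \eqref{eq:I-a-b-root}; all dimensional and $\beta$-dependent constants combine into $C_{3}$.

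\emph{Part (ii).} With (i) in hand, I integrate in $s\in[\tau,t]$. The $\kappa_{1}$-term reduces to the Beta integral
\[
\int_{\tau}^{t}(t-s)^{-\frac{d(\beta-1)}{2\beta}-\frac{\alpha}{2}}(s-\tau)^{\alpha/2}\,\dd s
=B\big(1-\tfrac{d(\beta-1)}{2\beta}-\tfrac{\alpha}{2},\,1+\tfrac{\alpha}{2}\big)\,(t-\tau)^{1-\frac{d(\beta-1)}{2\beta}}=:\kappa_{3}\,(t-\tau)^{1-\frac{d(\beta-1)}{2\beta}},
\]
which is finite precisely under the hypothesis $\frac{d(\beta-1)}{2\beta}+\frac{\alpha}{2}<1$, while the $|x-y|^{\alpha}$-term gives
\[
\int_{\tau}^{t}(t-s)^{-\frac{d(\beta-1)}{2\beta}}\,\dd s=\kappa_{0}\,(t-\tau)^{1-\frac{d(\beta-1)}{2\beta}},\qquad \kappa_{0}:=\Big(1-\tfrac{d(\beta-1)}{2\beta}\Big)^{-1}.
\]
Multiplying by the prefactor $(t-\tau)^{\frac{d(\beta-1)}{2\beta}-\frac{\alpha}{2}}$ from (i) collapses the $(t-\tau)$ exponents to $1-\alpha/2$, and the resulting bracket $\kappa_{3}+\kappa_{0}|x-y|^{\alpha}(t-\tau)^{-\alpha/2}$ is bounded by $\max(\kappa_{3},\kappa_{0})\bigl(1+|x-y|^{\alpha}(t-\tau)^{-\alpha/2}\bigr)$. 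This yields \eqref{eq:integ-01} with $C_{4}=\max(\kappa_{3},\kappa_{0})\,C_{3}$.

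\emph{Main obstacle.} The argument invokes no deep analytic tool: once the triangle inequality is applied the proof is essentially exponent bookkeeping. The only genuinely delicate point is the passage $G_{t_{1}+t_{2}}(y-x)^{1/\beta}\to G_{\beta(t-\tau)}(y-x)$, because $t_{1}+t_{2}$ is $s$-dependent and differs from $t-\tau$ whenever $\beta\ne 1$. This forces the two-sided bound on $t_{1}+t_{2}$ to be invoked twice --- first to trade the Gaussian scale $\beta(t_{1}+t_{2})$ for $\beta(t-\tau)$, and second to replace each remaining power $(t_{1}+t_{2})^{e}$ --- each step producing only a harmless $\beta$-dependent multiplicative factor absorbable into $C_{3}$.
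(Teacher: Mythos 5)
Your proof is correct and follows essentially the same route as the paper: take the $\beta$-th root of the identity in Lemma \ref{lem:4.1}, split the Gaussian moment integral into the $\kappa_{1}$-term and the $|x-y|^{\alpha}$-term, convert $G_{t_{1}+t_{2}}^{1/\beta}$ to $G_{\beta(t-\tau)}$ via the two-sided bound $\beta^{-1}(t-\tau)\le t_{1}+t_{2}\le t-\tau$, and then integrate in $s$ using Beta-type integrals. The only (harmless) differences are that the paper performs the splitting with Minkowski's inequality in $L^{\alpha\beta}(G_{1}\,\dd z)$ rather than $|a+b|^{r}\le C_{r}(|a|^{r}+|b|^{r})$ — so your explicit constant picks up an extra factor $C_{r}^{1/\beta}$ — and that your final bracket should retain the $\kappa_{1}$ multiplying the Beta integral; both affect only the traceable constants.
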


\begin{proof}
By \eqref{eq:I-a-b-1} and the triangle inequality, we obtain
\begin{align*}
\sqrt[\beta]{I_{\alpha,\beta}(\tau,x,s,t,y)}
& =C_{1}^{\frac{1}{\beta}}
t_{1}^{-(\frac{d(\beta-1)}{2\beta}+\alpha)}
\Big(\int_{\mathbb{R}^{d}}
\Big|\sqrt{\frac{t_{1}t_{2}}{t_{1}+t_{2}}}z
  +\frac{t_{1}(x-y)}{t_{1}+t_{2}}\Big|^{\alpha\beta}G_{1}(z)\,\textrm{d}z\Big)^{\frac{1}{\beta}}  \sqrt[\beta]{G_{t_{1}+t_{2}}(y-x)}.\\
 &\le C_{1}^{\frac{1}{\beta}} t_{1}^{-(\frac{d(\beta-1)}{2\beta}+\alpha)}
  \Big(\kappa_{1}\Big(\frac{t_{1}t_{2}}{t_{1}+t_{2}}\Big)^{\frac{\alpha}{2}}
    +\Big(\frac{t_{1}|x-y|}{t_{1}+t_{2}}\Big)^{\alpha}\Big)
    \sqrt[\beta]{G_{t_{1}+t_{2}}(y-x)}.
\end{align*}
Using the identity
\[
\sqrt[\beta]{G_{t_{1}+t_{2}}(y-x)}
=\beta^{\frac{d}{2}}(2\pi)^{\frac{d(\beta-1)}{2\beta}}(t_{1}+t_{2})^{\frac{d(\beta-1)}{2\beta}}G_{\beta(t_{1}+t_{2})}(y-x),
\]
and substituting this into the previous inequality, we therefore obtain
\begin{align*}
\sqrt[\beta]{I_{\alpha,\beta}(\tau,x,s,t,y)}
\leq &\, C_{2}(t_{1}+t_{2})^{\frac{d(\beta-1)}{2\beta}}(t_{1}+t_{2})^{-\frac{\alpha}{2}}
G_{\beta(t_{1}+t_{2})}(y-x)\\
 & \times\Big(\kappa_{1}
  t_{1}^{-(\frac{d(\beta-1)}{2\beta}+\frac{\alpha}{2})} t_{2}^{\frac{\alpha}{2}}
  +t_{1}^{-\frac{d(\beta-1)}{2\beta}}\frac{|x-y|^\alpha}{(t_{1}+t_{2})^{\frac{\alpha}{2}}}\Big),
\end{align*}
where
\begin{equation}
C_{2}=\beta^{\frac{d}{2}}(2\pi)^{\frac{d(\beta-1)}{2\beta}}C_{1}^{\frac{1}{\beta}}.\label{eq:C-2}
\end{equation}
Since
\begin{align*}
&t_{1}=\beta^{-1}(t-s),\quad t_{2}=s-\tau,\\
&t-\tau\geq t_{1}+t_{2}=\beta^{-1}(t-s)+s-\tau\geq\beta^{-1}(t-\tau),
\end{align*}
and
\[
G_{\beta(t_{1}+t_{2})}(y-x)\leq\beta^{\frac{d}{2}}G_{\beta(t-\tau)}(y-x),
\]
substituting these relations into the previous inequality, we thus deduce
\begin{align}
\sqrt[\beta]{I_{\alpha,\beta}(\tau,x,s,t,y)}
& \leq C_{3}(t-\tau)^{\frac{d(\beta-1)}{2\beta}-\frac{\alpha}{2}}G_{\beta(t-\tau)}(y-x)\nonumber\\
 &\quad\,  \times\Big(\kappa_{1}
 (s-\tau)^{\frac{\alpha}{2}}(t-s)^{-(\frac{d(\beta-1)}{2\beta}+\frac{\alpha}{2})}
 +(t-s)^{-\frac{d(\beta-1)}{2\beta}}\frac{|x-y|^{\alpha}}{(t-\tau)^{\frac{\alpha}{2}}}\Big),\label{3.15a}
\end{align}
where
\begin{equation}
C_{3}=C_{2}\beta^{\frac{d+\alpha}{2}+\frac{d(\beta-1)}{2\beta}+\frac{\alpha}{2}}=\beta^{\frac{2d+\alpha}{2}+\frac{d(\beta-1)}{2\beta}+\frac{\alpha}{2}}(2\pi)^{\frac{d(\beta-1)}{2\beta}}C_{1}^{\frac{1}{\beta}},\label{eq:C-3}
\end{equation}
which implies (i).

To prove (ii), we observe that the integral of the first term in the
bracket in \eqref{3.15a} can be written as
\begin{equation*}
\kappa_{1}\int_{\tau}^{t}\frac{(s-\tau)^{\frac{\alpha}{2}}}{(t-s)^{\frac{d\beta-d}{2\beta}+\frac{\alpha}{2}}}\,\textrm{d}s\\
  =\kappa_{1}\int_{\tau}^{t}\frac{(t-\tau-(t-s))^{\frac{\alpha}{2}}}{(t-s)^{\frac{d\beta-d}{2\beta}+\frac{\alpha}{2}}}\,\textrm{d}s\\
  =\kappa_{0}(t-\tau)^{1-\frac{d(\beta-1)}{2\beta}},
\end{equation*}
where
\[
\kappa_{0}=\kappa_{1}\int_{0}^{1}(1-s)^{\frac{\alpha}{2}}s^{-\frac{d\beta-d}{2\beta}-\frac{\alpha}{2}}\,\textrm{d}s,
\]
which is finite when
\[
\frac{d\beta-d}{2\beta}+\frac{\alpha}{2}<1,\quad\frac{\alpha}{2}>-1.
\]
Similarly, the second integral can be written as
\[
\int_{\tau}^{t}(t-s)^{-\frac{d(\beta-1)}{2\beta}}\,\textrm{d}s
=\kappa_{3}(t-\tau)^{1-\frac{d(\beta-1)}{2\beta}},
\]
where
$\kappa_{3}=\int_{0}^{1}s^{-\frac{d(\beta-1)}{2\beta}}\,\textrm{d}s$
is finite when $\frac{d(\beta-1)}{2\beta}<1$. Then
\begin{align*}
\int_{\tau}^{t}\sqrt[\beta]{I_{\alpha,\beta}(\tau,x,s,t,y)}\,\textrm{d}s\leq C_{3}(t-\tau)^{1-\frac{\alpha}{2}}G_{\beta(t-\tau)}(y-x)
\Big(\kappa_{0}+\kappa_{3} \frac{|x-y|^\alpha}{(t-\tau)^{\frac{\alpha}{2}}}\Big),
\end{align*}
and \eqref{eq:integ-01} follows immediately.
\end{proof}

\section{Explicit Estimates for the Fundamental Solution}

We retain the basic assumption on the vector field $b(x,t)$
that is bounded and Borel measurable.
Notice that the divergence-free assumption
on $b$ in the following theorem, Theorem \ref{prop:4.3}, is not needed.

\begin{thm}\label{prop:4.3}
For every $\beta>1$, there are constants $C_{1}$
and $C_{2}$ depending only on $\beta$ and the dimension $d$ such
that
\begin{equation}\label{exp-0-bound}
p_{b}(\tau,x,t,y)\leq C_{1}\textrm{e}^{C_{2}(t-\tau)\left\Vert b\right\Vert _{\infty}^{2}}G_{\beta(t-\tau)}(y-x)
\qquad\,\,\,\mbox{for $t>\tau\geq0$ and $x,y\in\mathbb{R}^{d}$}.
\end{equation}
\end{thm}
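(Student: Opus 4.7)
My plan is to exploit the functional integral representation of Theorem~\ref{thm:3.1}, which expresses $p_b(\tau,\xi,t,y)$ as $G_{t-\tau}(y-\xi)$ plus the integral over $s\in[\tau,t]$ of $\mathbb{P}[U_b^{\tau,\xi}(s)G_{t-s}(y-X_s)b(X_s,s)\cdot\tfrac{y-X_s}{t-s}]$. The leading Gaussian is handled instantly by~(\ref{re-G6}): $G_{t-\tau}(y-\xi)\le\beta^{d/2}G_{\beta(t-\tau)}(y-\xi)$. For the integral term, I would apply H\"older's inequality with conjugate exponents $p,q>1$ to decouple the Cameron-Martin density $U_b^{\tau,\xi}(s)$ from the Gaussian factor $|V|\le\|b\|_\infty G_{t-s}(y-X_s)|y-X_s|/(t-s)$.

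The Cameron-Martin factor is handled by writing $U_b^p=\exp[pM_s-\tfrac{p^2}{2}\langle M\rangle_s]\exp[\tfrac{p^2-p}{2}\langle M\rangle_s]$, where $M_s=\int_\tau^s b(X_r,r)\cdot\textrm{d}B_r$; the first piece is a mean-one exponential martingale (by Novikov, since $b$ is bounded), and the second is pathwise bounded by $\exp[p(p-1)(s-\tau)\|b\|_\infty^2/2]$, which gives $(\mathbb{P}[U_b^p])^{1/p}\le\exp[\tfrac{p-1}{2}(t-\tau)\|b\|_\infty^2]$. For the Gaussian factor, $(\mathbb{P}[|V|^q])^{1/q}=\|b\|_\infty I_{1,q}(\tau,\xi,s,t,y)^{1/q}$ in the notation of~(\ref{eq:I-a-b}); Lemma~\ref{lem:4.2}(ii) with $\alpha=1$ and $\beta$ there set equal to $q$ then yields, after integration in $s$,
$$\int_\tau^t I_{1,q}(\tau,\xi,s,t,y)^{1/q}\,\textrm{d}s\le C\sqrt{t-\tau}\,G_{q(t-\tau)}(y-\xi)\Big(1+\tfrac{|\xi-y|}{\sqrt{t-\tau}}\Big),$$
provided the integrability condition $q<d/(d-1)$ of Lemma~\ref{lem:4.2}(ii) is satisfied.

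It remains to absorb the residual factors into the target form $C_1e^{C_2(t-\tau)\|b\|_\infty^2}G_{\beta(t-\tau)}(y-\xi)$. The prefactor $\|b\|_\infty\sqrt{t-\tau}$ is swallowed into the exponential via the elementary inequality $x\le C_\varepsilon e^{\varepsilon x^2}$; the linear term $|\xi-y|/\sqrt{t-\tau}$ multiplying $G_{q(t-\tau)}$ is handled by~(\ref{SDE-Gest1}) with $\alpha=1$ and a dilation parameter $\gamma>1$, producing a Gaussian of variance $\gamma q(t-\tau)$; finally the comparison $G_{t_1}\le(t_2/t_1)^{d/2}G_{t_2}$ for $t_1<t_2$ (a direct consequence of~(\ref{re-G6})) dominates this by $G_{\beta(t-\tau)}$ provided $\gamma q<\beta$. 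For any prescribed $\beta>1$ these requirements are mutually consistent: one picks $q\in(1,\min(\beta,d/(d-1)))$ and then $\gamma\in(1,\beta/q)$, which is always possible since $d/(d-1)>1$. The main obstacle I anticipate is not any single step but the careful bookkeeping of the four coupled parameters $p,q,\gamma,\beta$ together with all the universal constants produced by Lemmas~\ref{lem:4.1}--\ref{lem:4.2}, so that the final $C_1$ and $C_2$ can honestly be declared to depend only on $\beta$ and $d$.
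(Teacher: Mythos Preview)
Your proposal is correct and follows essentially the same route as the paper's proof: start from the functional integral representation (Theorem~\ref{thm:3.1}), split the integral term by H\"older, control the Cameron--Martin factor as an exponential martingale, invoke Lemma~\ref{lem:4.2}(ii) with $\alpha=1$ on the $I_{1,\cdot}$ integral, and then absorb the leftover $\|b\|_\infty\sqrt{t-\tau}$ and $(1+|y-x|/\sqrt{t-\tau})$ factors into the exponential and a slightly dilated Gaussian. Your explicit tracking of the admissible parameter ranges ($q<d/(d-1)$ from the integrability hypothesis of Lemma~\ref{lem:4.2}(ii), and $\gamma q<\beta$ for the final Gaussian comparison) is in fact more carefully spelled out than in the paper, where the corresponding relabelling of the H\"older exponent versus the target $\beta$ is left implicit.
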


\begin{proof}
According to Theorem \ref{thm:3.1},
\[
p_{b}(\tau,x,t,y)= G_{t-\tau}(y-x)
 +\mathbb{E}\Big[\int_{\tau}^{t}U_{s}b(X_{s},s)\cdot\frac{y-X_{s}}{t-s}\,G_{t-s}(y-X_{s})
 \,\textrm{d}s\Big],
\]
where $X_{s}=B_{t}-B_{\tau}+x$, $B$ is the standard Brownian motion
of dimension $d$, and $U$ is the Cameron-Martin density of $L_{b}$-diffusion
with respect to the Brownian motion.
Consider the second term on the right-hand side:
\[
J\equiv\mathbb{E}\Big[\int_{\tau}^{t}U_{s}b(X_{s},s)\cdot\frac{y-X_{s}}{t-s}\,G_{t-s}(y-X_{s})
\,\textrm{d}s\Big].
\]
By the H\"older inequality, we have
\[
|J|\leq\left\Vert b\right\Vert _{\infty}\int_{\tau}^{t}
\sqrt[\gamma]{\mathbb{P}\left[U_{s}^{\gamma}\right]}\sqrt[\beta]{I_{1,\beta}(\tau,x,s,t,y)}\,\textrm{d}s,
\]
where $\frac{1}{\gamma}+\frac{1}{\beta}=1$ and
\[
U_{s}=\exp\Big[\int_{\tau\wedge s}^{s}b(X_{r},r)\cdot\textrm{d}B_{r}
  -\frac{1}{2}\int_{\tau\wedge s}^{s}|b|^{2}(X_{r},r)\,\textrm{d}r\Big].
\]
Since $U$ is an exponential martingale, then
\[
\mathbb{P}\left[U_{s}^{\gamma}\right]\leq\textrm{e}^{\frac{1}{2}\gamma(\gamma-1)\left\Vert b\right\Vert _{\infty}^{2}(s-\tau)},
\]
so that
\[
|J|\leq\left\Vert b\right\Vert _{\infty}\textrm{e}^{\frac{1}{2}(\gamma-1)\left\Vert b\right\Vert _{\infty}^{2}(t-\tau)}\int_{\tau}^{t}\sqrt[\beta]{I_{1,\beta}(\tau,x,s,t,y)}\,\textrm{d}s,
\]
where
\begin{equation}\label{eq:I-a-b-2}
I_{1,\beta}(\tau,x,s,t,y)=\mathbb{P}\Big[\Big(\frac{|y-X_{s}|}{t-s}h(s,X_{s},t,y)\Big)^{\beta}\Big]
\end{equation}
and $X_{s}=B_{s}-B_{\tau}+x$. Thus, by Lemma \ref{lem:4.2},
\[
|J|\leq C_{3}\left\Vert b\right\Vert _{\infty}\textrm{e}^{\frac{1}{2}(\gamma-1)\left\Vert b\right\Vert _{\infty}^{2}(t-\tau)}\left(\sqrt{t-\tau}+|y-x|\right)G_{\beta(t_{1}+t_{2})}(y-x),
\]
where $C_{3}>1$ depends only on $d$ and $\beta$, and
\[
t_{1}=\beta^{-1}(t-s),\quad t_{2}=s-\tau.
\]
The estimate follows from the following inequality: For every $\gamma>1$,
there are constants $C_{5}$ and $C_{6}$ depending only on $\gamma,\beta$,
and $d$ such that
\[
\sqrt{s}\left\Vert b\right\Vert_{\infty}\textrm{e}^{\frac{1}{2(\beta-1)}
\left\Vert b\right\Vert _{\infty}^{2}s}\big(1+\frac{|z|}{\sqrt{s}}\big)G_{\beta s}(z)
\leq C_{5}\textrm{e}^{C_{6}\left\Vert b\right\Vert _{\infty}^{2}s}G_{\gamma\beta s}(z).
\]
\end{proof}
As a consequence, we may deduce the following estimate that will be
used in the proof of the gradient estimate for $p_{b}(s,x,t,y)$.

\begin{lem}\label{lem:4.4}
Let $b$ be a bounded time-dependent vector
field with bound $\left\Vert b\right\Vert _{\infty}$, let $x,\xi\in\mathbb{R}^{d}$ and
$t>0$ be fixed, and let $\gamma\geq1$. Consider the following integral{\rm :}
\[
J(\varepsilon)=\sqrt[\gamma]{\int_{\mathbb{R}^{d}}\big(p_{b}(\tau,\xi,\tau+\varepsilon,z)\big)^{\gamma}
  p_{-b}(0,x,t-\varepsilon,z)\,{\rm d}z}.
\]
Then, for every $\beta>\gamma$, there are $C_{1}$ and $C_{2}$ depending
only on $\beta,\gamma$, and the dimension $d$ such that
\[
J(\varepsilon)\leq C_{1}\textrm{e}^{C_{2}t\left\Vert b\right\Vert _{\infty}^{2}}\left(\frac{t}{\varepsilon}\right)^{\frac{d(\gamma-1)}{2\gamma}}G_{\gamma\beta t}(\xi-x)
\qquad\,\,\mbox{for any $0<\varepsilon<t$ and $x,\xi\in\mathbb{R}^{d}$.}
\]
\end{lem}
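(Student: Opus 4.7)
The plan is to reduce the computation to an integral of Gaussians by invoking Theorem \ref{prop:4.3} twice (for $p_b$ and $p_{-b}$, noting that $\|-b\|_\infty=\|b\|_\infty$ and the theorem does not need divergence-freeness) and then using two standard Gaussian identities (the power-rescaling rule and the convolution rule). The target bound is essentially forced once the auxiliary variance parameters are chosen correctly, so the main work is parameter-tracking rather than conceptual.

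Concretely, I would fix auxiliary parameters $\beta_{1},\beta_{2}>1$ to be chosen later and use Theorem \ref{prop:4.3} to bound pointwise
\[
p_{b}(\tau,\xi,\tau+\varepsilon,z)\leq C\,e^{C\varepsilon\|b\|_{\infty}^{2}}G_{\beta_{1}\varepsilon}(z-\xi),\qquad
p_{-b}(0,x,t-\varepsilon,z)\leq C\,e^{C(t-\varepsilon)\|b\|_{\infty}^{2}}G_{\beta_{2}(t-\varepsilon)}(z-x).
\]
Substituting into the integrand of $J(\varepsilon)^{\gamma}$ and using the elementary identity $[G_{s}(y)]^{\gamma}=(2\pi s)^{-d(\gamma-1)/2}\gamma^{-d/2}G_{s/\gamma}(y)$, the first factor becomes a constant times $\varepsilon^{-d(\gamma-1)/2}G_{\beta_{1}\varepsilon/\gamma}(z-\xi)$, isolating the $\varepsilon^{-d(\gamma-1)/2}$ singularity. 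The classical Gaussian convolution $\int_{\mathbb{R}^{d}}G_{s_{1}}(z-\xi)G_{s_{2}}(z-x)\,\dd z=G_{s_{1}+s_{2}}(\xi-x)$ then collapses the integral to the single Gaussian $G_{r}(\xi-x)$ with $r=\beta_{1}\varepsilon/\gamma+\beta_{2}(t-\varepsilon)$.

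Taking the $\gamma$-th root and applying the dual identity $[G_{r}(y)]^{1/\gamma}=(2\pi r)^{d(\gamma-1)/(2\gamma)}\gamma^{d/2}G_{r\gamma}(y)$ combines the $\varepsilon$- and $r$-dependent prefactors into one proportional to $(r/\varepsilon)^{d(\gamma-1)/(2\gamma)}$, while producing the Gaussian $G_{r\gamma}(\xi-x)$ of variance $r\gamma=\beta_{1}\varepsilon+\gamma\beta_{2}(t-\varepsilon)$. The decisive choice is $\beta_{1}=\gamma\beta_{2}$, which collapses $r\gamma=\beta_{1}t$ to a constant in $\varepsilon$ and gives $r/\varepsilon=\beta_{2}\,t/\varepsilon$, producing exactly the factor $(t/\varepsilon)^{d(\gamma-1)/(2\gamma)}$ up to a constant. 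Since $\beta>\gamma\geq 1$, I would pick, for instance, $\beta_{2}=(1+\beta)/2\in(1,\beta)$ and $\beta_{1}=\gamma\beta_{2}\in(\gamma,\gamma\beta)$, so that $\beta_{1},\beta_{2}>1$ (required by Theorem \ref{prop:4.3}) and $\beta_{1}<\gamma\beta$; this last inequality yields the elementary comparison $G_{\beta_{1}t}(\xi-x)\leq(\gamma\beta/\beta_{1})^{d/2}G_{\gamma\beta t}(\xi-x)$, producing the target Gaussian.

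The only remaining item is the exponential constant: $J(\varepsilon)^{\gamma}$ inherits a factor $e^{C_{2}(\gamma\varepsilon+t-\varepsilon)\|b\|_{\infty}^{2}}$, and since $\gamma\geq 1$ one has $(\gamma\varepsilon+t-\varepsilon)/\gamma=\varepsilon+(t-\varepsilon)/\gamma\leq t$, so the $\gamma$-th root is dominated by $e^{C_{2}t\|b\|_{\infty}^{2}}$. I do not expect any conceptual obstacle; the main care lies in making the three compatibility requirements on $\beta_{1},\beta_{2}$ (the hypothesis $>1$ of Theorem \ref{prop:4.3}, the pairing $\beta_{1}=\gamma\beta_{2}$ needed to kill the $\varepsilon$-dependence of the final variance, and the strict inequality $\beta_{1}<\gamma\beta$ needed for the final Gaussian comparison) hold simultaneously, while keeping every constant dependent only on $\beta,\gamma,d$.
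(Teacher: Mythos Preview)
Your proposal is correct and follows essentially the same route as the paper: apply Theorem~\ref{prop:4.3} to both $p_{b}$ and $p_{-b}$, reduce the resulting Gaussian integral via the standard product/convolution identity, take the $\gamma$-th root, and compare variances. The only cosmetic difference is parameter management: the paper applies Theorem~\ref{prop:4.3} with the \emph{same} auxiliary parameter $\beta$ in both places, obtaining a final variance $\gamma(t_{1}+t_{2})$ with $t_{1}+t_{2}=\gamma^{-1}\beta\varepsilon+\beta(t-\varepsilon)\in[\gamma^{-1}\beta t,\beta t]$, and then absorbs the mild $\varepsilon$-dependence by the two-sided bound; your choice $\beta_{1}=\gamma\beta_{2}$ kills that $\varepsilon$-dependence exactly, which is slightly cleaner but equivalent.
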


\begin{proof}
Without loss of generality, we may assume that $\tau=0$.
Let $\lambda_{1}\geq1$, $\lambda_{2}\geq1$, and
\[
t_{1}=\gamma^{-1}\lambda_{1}\varepsilon,\quad t_{2}=\lambda_{2}(t-\varepsilon).
\]
Then
\[
\frac{|z-\xi|^{2}}{t_{1}}+\frac{|z-x|^{2}}{t_{2}}
=\frac{\big|z-\big(\frac{t_{2}}{t_{1}+t_{2}}\xi+\frac{t_{1}}{t_{1}+t_{2}}x\big)\big|^{2}}
{\frac{t_{1}t_{2}}{t_{1}+t_{2}}}+\frac{|\xi-x|^{2}}{(t_{1}+t_{2})},
\]
so that the product
\[
\big(G_{\lambda_{1}\varepsilon}(z-\xi)\big)^{\gamma}G_{\lambda_{2}(t-\varepsilon)}(z-x)
\]
equals
\[
\big(\frac{t_{1}+t_{2}}{t_{1}}\big)^{\frac{d}{2}(\gamma-1)}
\big(G_{\gamma(t_{1}+t_{2})}(\xi-x)\big)^{\gamma}
G_{\frac{t_{1}t_{2}}{t_{1}+t_{2}}}(z-\frac{t_{2}\xi}{t_{1}+t_{2}}-\frac{t_{1}x}{t_{1}+t_{2}}).
\]
Let $\beta>1$ and $\gamma\geq1$ be fixed. Then, by (\ref{exp-0-bound}),
\[
p_{b}(\tau,\xi,\tau+\varepsilon,z)\leq C_{1}\textrm{e}^{C_{2}\varepsilon\left\Vert b\right\Vert _{\infty}^{2}}G_{\beta\varepsilon}(z-\xi)
\]
and
\[
p_{-b}(0,x,t-\varepsilon,z)
\leq C_{1}\textrm{e}^{C_{2}(t-\varepsilon)\left\Vert b\right\Vert _{\infty}^{2}}
G_{\beta(t-\varepsilon)}(z-x),
\]
so that
\begin{align*}
E & \equiv \big(p_{b}(\tau,\xi,\tau+\varepsilon,z)\big)^{\gamma}p_{-b}(0,x,t-\varepsilon,z)\\
 & \leq C_{1}^{\gamma+1}\textrm{e}^{C_{2}(\gamma\varepsilon+t-\varepsilon)\left\Vert b\right\Vert _{\infty}^{2}}\big(G_{\beta\varepsilon}(z-\xi)\big)^{\gamma}G_{\beta(t-\varepsilon)}d(z-x)\\
 & =C_{1}^{\gamma+1}\textrm{e}^{C_{2}(\gamma\varepsilon+t-\varepsilon)
   \left\Vert b\right\Vert _{\infty}^{2}}\big(\frac{t_{1}+t_{2}}{t_{1}}\big)^{\frac{d}{2}(\gamma-1)}\\
 &\quad \times \big(G_{\gamma(t_{1}+t_{2})}(\xi-x)\big)^{\gamma}
   G_{\frac{t_{1}t_{2}}{t_{1}+t_{2}}}(z-\frac{t_{2}\xi}{t_{1}+t_{2}}-\frac{t_{1}x}{t_{1}+t_{2}}),
\end{align*}
where
\[
t_{1}=\gamma^{-1}\beta\varepsilon,\quad t_{2}=\beta(t-\varepsilon).
\]
Thus, we have
\[
J(\varepsilon)\leq C_{1}^{1+\frac{1}{\gamma}}
\textrm{e}^{\frac{C_2}{\gamma}(\gamma\varepsilon+t-\varepsilon)\left\Vert b\right\Vert_{\infty}^{2}}\big(\frac{t_{1}+t_{2}}{t_{1}}\big)^{\frac{d}{2}(1-\frac{1}{\gamma})}
G_{\gamma(t_{1}+t_{2})}(\xi-x).
\]
Since
\[
\gamma^{-1}\beta t\leq t_{1}+t_{2}=\gamma^{-1}\beta\varepsilon+\beta(t-\varepsilon)\leq\beta t,
\]
then
\[
J(\varepsilon)\leq\gamma^{\frac{d}{2}(1-\frac{1}{\gamma})}C_{1}^{1+\frac{1}{\gamma}}\textrm{e}^{C_{2}t\left\Vert b\right\Vert _{\infty}^{2}}\big(\frac{t}{\varepsilon}\big)^{\frac{d}{2}(1-\frac{1}{\gamma})}
G_{\gamma\beta t}(\xi-x),
\]
which yields the required estimate.
\end{proof}

\section{An Explicit Gradient Estimate for the Fundamental Solution}

In this section, we assume that the vector field $b(x,t)$ is smooth, divergence-free, and bounded,
and its derivative is also bounded.
Under these assumptions,
the $L_{b}$-diffusion may be constructed by solving It\^o's stochastic
differential equations. The goal of this section is to  establish an explicit
gradient estimate for $p_{b}(\tau,\xi,t+\tau,x)$
with respect to $x$.
Recall that, for the heat kernel on $\mathbb{R}^{d}$,
\begin{equation}\label{model-der-e1-1}
|\nabla_{x}G_{t}(x-\xi)|\leq\frac{\beta^{\frac{d+1}{2}}}{\sqrt{2(\beta-1)}}\frac{1}{\sqrt{t}}G_{\beta t}(x-\xi)
\qquad\,\,\mbox{for any $t>0$},
\end{equation}
where $\beta>1$ is any constant. We aim to achieve
a similar bound for $p_{b}(\tau,\xi,t+\tau,x)$.
To this end, we first prove the following
estimate.
\begin{lem}\label{lem:5-1}
Let $T>\tau\geq 0$ be fixed, and let $Y$ be the strong
solution of the Cauchy problem of the stochastic differential equations{\rm :}
\begin{equation}\label{eq:n1}
\begin{cases}
\textrm{\rm d}Y^{i}=\textrm{\rm d}B_{t}^{i}-b^{i}(Y,T+\tau-t)\textrm{\rm d}t,\\[1mm]
Y^{i}(0)=x^{i},
\end{cases}
\qquad\quad\,\mbox{for $i=1,\cdots,d$},
\end{equation}
where $B=(B^1_t, B^2_t, \dots, B^d_t)$ is the standard Brownian motion of dimension
$d$ on a probability space $(\varOmega,\mathcal{F},\mathbb{P})$.
Then $Z_{j}^{i}=\partial_{x^{j}}Y^{i}, i,j=1,\dots, n$, form the solution of the Cauchy problem of
the differential equations{\rm :}
\begin{equation}\label{eq:n2}
\begin{cases}
\textrm{\rm d}Z_{j}^{i}=-Z_{j}^{k}\frac{\partial b^{i}}{\partial y^{k}}(Y,T+\tau-t)\textrm{\rm d}t,\\[1mm]
Z_{j}^{i}(0)=\delta_{j}^{i},
\end{cases}
\end{equation}
and satisfy
\begin{equation}\label{Z-estimate}
|Z(t)|\leq|Z(0)|e^{2\left(\sqrt{T}-\sqrt{T-t}\right)\left\Vert \nabla b\right\Vert _{\tau\rightarrow\tau+T}}
\qquad\,\textrm{ for }t\in[0,T].
\end{equation}
\end{lem}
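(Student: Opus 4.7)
The plan is to verify the ODE for $Z$ first and then perform a Gr\"onwall-type estimate tailored to the parabolic norm $\|\nabla b\|_{\tau\to\tau+T}$.

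For the first part, because the noise in \eqref{eq:n1} is additive (independent of $x$) and $b$ is $C^1$ with bounded spatial derivatives, the flow $x\mapsto Y(t,x)$ is smooth, and differentiating \eqref{eq:n1} in $x^{j}$ under the integral (using the chain rule on the drift) produces \eqref{eq:n2} directly; the stochastic term contributes nothing, so $Z$ solves an ordinary differential equation path by path, with $Z(0)=I$.

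For the estimate, I would compute $\frac{\textrm{d}}{\textrm{d}t}|Z|^{2}=-2Z_{j}^{i}Z_{j}^{k}\partial_{y^{k}}b^{i}(Y,T+\tau-t)$ (with $|\cdot|$ the Frobenius norm). Applying Cauchy--Schwarz $j$-component-wise gives $\bigl|\frac{\textrm{d}}{\textrm{d}t}|Z|^{2}\bigr|\leq 2|Z|^{2}\,|\nabla b(Y,T+\tau-t)|$, hence
\[
\frac{\textrm{d}}{\textrm{d}t}|Z(t)|\leq |Z(t)|\,|\nabla b(Y(t),T+\tau-t)|.
\]
At this stage, the key observation is that as $t$ runs over $[0,T]$, the time argument $T+\tau-t$ runs over $[\tau,T+\tau]$, so by the very definition \eqref{eq:p-norm} of the parabolic norm,
\[
|\nabla b(Y(t),T+\tau-t)|\leq \frac{\|\nabla b\|_{\tau\to\tau+T}}{\sqrt{(T+\tau-t)-\tau}}=\frac{\|\nabla b\|_{\tau\to\tau+T}}{\sqrt{T-t}}.
\]
Feeding this into the differential inequality and integrating $\frac{\textrm{d}}{\textrm{d}t}\ln|Z(t)|\leq \|\nabla b\|_{\tau\to\tau+T}/\sqrt{T-t}$ from $0$ to $t$ yields $\ln(|Z(t)|/|Z(0)|)\leq 2(\sqrt{T}-\sqrt{T-t})\|\nabla b\|_{\tau\to\tau+T}$, which is exactly \eqref{Z-estimate}.

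There is no serious obstacle; the mild point is simply to match the blow-up rate of $|\nabla b(\cdot, T+\tau-t)|$ near the endpoint $t=T$ against the integrability of $(T-t)^{-1/2}$, so that the parabolic norm (rather than an $L^\infty$ norm, which need not even be finite at $t=\tau$ inside this scaling) is exactly the right quantity to control the exponent. The constant $2$ in the exponent arises solely from $\int_0^t(T-s)^{-1/2}\textrm{d}s=2(\sqrt{T}-\sqrt{T-t})$.
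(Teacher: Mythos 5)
Your proposal is correct and follows essentially the same route as the paper: differentiate $|Z|^{2}$ along the linear ODE, bound $|\nabla b(\cdot,T+\tau-t)|$ by $(T-t)^{-1/2}\left\Vert \nabla b\right\Vert _{\tau\rightarrow\tau+T}$ via the definition of the parabolic norm, and integrate $(T-s)^{-1/2}$ to produce the exponent $2\big(\sqrt{T}-\sqrt{T-t}\big)$. The only (cosmetic) difference is that you close the Gr\"onwall step by integrating the logarithmic derivative of $|Z(t)|$ directly, whereas the paper works with the integral form and an explicit integrating factor; both yield the identical constant.
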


\begin{proof}
From \eqref{eq:n2}, we deduce
\begin{equation}\label{eq:n2-1}
\begin{cases}
\textrm{d}|Z|^{2}=-2Z_{i}^{j}Z_{j}^{k}\frac{\partial b^{i}}{\partial y^{k}}(Y,T+\tau-t)\,\textrm{d}t,\\[1mm]
Z_{j}^{i}(0)=\delta_{j}^{i}.
\end{cases}
\end{equation}
By definition, we have
\[
|\sqrt{s-\tau}\nabla b(x,s)|\leq\left\Vert \nabla b\right\Vert _{\tau\rightarrow\tau+T}
\qquad\textrm{ for }s\in[\tau,\tau+T],
\]
which yields that
\[
|\nabla b(x,T+\tau-s)|\leq\frac{1}{\sqrt{T-s}}\left\Vert \nabla b\right\Vert_{\tau\rightarrow\tau+T}
\qquad\textrm{ for }s\in[0,T).
\]
Therefore, we have
\begin{align*}
|Z(t)|^{2} & =d^{2}-2\int_{0}^{t}Z_{i}^{j}(s)Z_{j}^{k}(s)\frac{\partial b^{i}}{\partial y^{k}}(Y,T+\tau-s)\,\textrm{d}s\\
 & \leq d^{2}+2\left\Vert \nabla b\right\Vert _{\tau\rightarrow\tau+T}\int_{0}^{t}\frac{|Z(s)|^{2}}{\sqrt{T-s}}
 \,\textrm{d}s\qquad\,\, \mbox{for $0\leq t\leq T$}.
\end{align*}
Let
\[
f(t)=\int_{0}^{t}\frac{|Z(s)|^{2}}{\sqrt{T-s}}\,\textrm{d}s.
\]
Then the previous integral inequality may be written as
\[
|Z(t)|^{2}=f'(t)\sqrt{T-t}\leq d^{2}+2\left\Vert \nabla b\right\Vert _{\tau\rightarrow\tau+T}f(t)
\qquad \mbox{for $0\leq t\leq T$},
\]
so that
\[
f'(t)\leq\frac{d^{2}}{\sqrt{T-t}}+\frac{2\left\Vert \nabla b\right\Vert _{\tau\rightarrow\tau+T}}{\sqrt{T-t}}f(t)
\qquad \mbox{for all $0\leq t<T$}.
\]
Define
\begin{align*}
q(t):=\exp\left(-2\left\Vert \nabla b\right\Vert _{\tau\rightarrow\tau+T}\int_{0}^{t}\frac{1}{\sqrt{T-s}}\,\textrm{d}s\right)
=\exp\left(-4\left\Vert \nabla b\right\Vert _{\tau\rightarrow\tau+T}\big(\sqrt{T}-\sqrt{T-t}\big)\right).
\end{align*}
Then $q(t)$ satisfies
\[
\begin{cases}
q'(t)=-\frac{2\left\Vert \nabla b\right\Vert _{\tau\rightarrow\tau+T}}{\sqrt{T-t}}q(t),\\
q(0)=1.
\end{cases}
\]
This implies that
\[
(fq)'(t)\leq\frac{d^{2}}{\sqrt{T-t}}q(t).
\]
After integrating from $0$ to $t\leq T$, we have
\begin{align*}
f(t)
\leq d^{2}
\textrm{e}^{-4\left\Vert \nabla b\right\Vert_{\tau\rightarrow\tau+T}\sqrt{T-t}}
\int_{0}^{t}\frac{1}{\sqrt{T-s}}
\textrm{e}^{4\left\Vert \nabla b\right\Vert _{\tau\rightarrow\tau+T}\sqrt{T-s}}\,\textrm{d}s
=d^{2}\frac{\textrm{e}^{4\left\Vert \nabla b\right\Vert _{\tau\rightarrow\tau+T}(\sqrt{T}-\sqrt{T-t})}-1}{2\left\Vert b\right\Vert _{\tau\rightarrow\tau+T}}.
\end{align*}
Then
\begin{align*}
|Z(t)|^{2}
\leq d^{2}+2\left\Vert \nabla b\right\Vert _{\tau\rightarrow\tau+T}f(t)
 =d^{2}\textrm{e}^{4\left\Vert \nabla b\right\Vert _{\tau\rightarrow\tau+T}(\sqrt{T}-\sqrt{T-t})},
\end{align*}
that is,
\[
|Z(t)|\leq|Z(0)|\textrm{e}^{2\left\Vert \nabla b\right\Vert _{\tau\rightarrow\tau+T}(\sqrt{T}-\sqrt{T-t})}
\qquad\mbox{for all $t\in[0,T]$},
\]
where $|Z(0)|=d$ is the dimension, or the norm
of the identity matrix.
\end{proof}

\begin{thm}[Explicit Gradient Estimate]\label{thm:4.5}
For any $\beta>1$, there
are constants $C_{1}$ and $C_{2}$ depending only on $\beta$ and
the dimension $d$ such that
\begin{equation}
\left|\nabla p_{b}\right|(\tau,\xi,t+\tau,x)\leq\frac{C_{1}}{\sqrt{t}}\textrm{e}^{C_{2}t\left\Vert b\right\Vert _{\infty}^{2}+\frac{1}{2}\sqrt{t}\left\Vert \nabla b\right\Vert _{\tau\rightarrow\tau+t}}G_{\beta t}(\xi-x)\label{est-c1-sharp}
\end{equation}
for all $t>0$, $\tau\geq0$, and $x,\xi\in\mathbb{R}^{d}$.
\end{thm}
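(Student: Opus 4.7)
The plan is to deploy the Bismut-type formula from Theorem~\ref{thm:bis-f} with a carefully truncated choice of $\rho$, combine it with H\"older's inequality, bound the resulting ``density'' factor via Lemma~\ref{lem:4.4}, and bound the resulting martingale factor via the Burkholder-Davis-Gundy (BDG) inequality together with the pathwise control on $Z$ from Lemma~\ref{lem:5-1}. The truncation of $\rho$ is the key device: it is what allows us to hit the sharp exponent $\tfrac12\sqrt{t}\|\nabla b\|_{\tau\rightarrow\tau+t}$ rather than a worse $2\sqrt{t}\|\nabla b\|_{\tau\rightarrow\tau+t}$ that the na\"ive choice $\rho(s)=s$ would produce.

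Concretely, I would fix a cutoff time $t^{\star}\in(0,t)$ to be chosen and take $\rho(s)=s\wedge t^{\star}$ in Theorem~\ref{thm:bis-f}, so that $\rho'(s)=\mathbf{1}_{\{s<t^{\star}\}}$ and $\rho(t)=t^{\star}$. The stochastic integral appearing in \eqref{eq:bis-01} is then $\mathcal{F}_{t^{\star}}^{0}$-measurable, so passing from $\mathbb{Q}$ to $\mathbb{P}$ via the density $R_{t^{\star}}^{x}$ and using \eqref{eq:R-d1} gives
\[
\partial_{x^{j}}p_{b}(\tau,\xi,t+\tau,x)=\frac{1}{t^{\star}}\mathbb{P}\Big[p_{b}(\tau,\xi,t+\tau-t^{\star},Y_{t^{\star}}^{x})\int_{0}^{t^{\star}}Z_{j}^{k}(x,s)\,\textrm{d}B_{s}^{k}\Big].
\]
Applying H\"older's inequality with exponents $\gamma\in(1,\sqrt{\beta})$ (non-empty since $\beta>1$) and $\gamma'=\gamma/(\gamma-1)$ then splits this into a density factor and a martingale factor.

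For the density factor, I would use that under $\mathbb{P}$ the process $Y_{t^{\star}}^{x}$ has density $p_{-b^{t+\tau}}(0,x,t^{\star},\cdot)$ by construction, so that
\[
\mathbb{P}\!\left[p_{b}(\tau,\xi,t+\tau-t^{\star},Y_{t^{\star}}^{x})^{\gamma}\right]=\int p_{b}(\tau,\xi,t+\tau-t^{\star},z)^{\gamma}p_{-b^{t+\tau}}(0,x,t^{\star},z)\,\textrm{d}z.
\]
This is precisely the object estimated by Lemma~\ref{lem:4.4} with short time $\varepsilon=t-t^{\star}$ (after noting $\|{-b^{t+\tau}}\|_{\infty}=\|b\|_{\infty}$, so the underlying bound from Theorem~\ref{prop:4.3} is unaffected). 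Choosing $\beta_{0}\in(\gamma,\beta/\gamma)$, which is non-empty because $\gamma^{2}<\beta$, Lemma~\ref{lem:4.4} bounds the $1/\gamma$-power of the above by $C_{1}\textrm{e}^{C_{2}t\|b\|_{\infty}^{2}}\big(\tfrac{t}{t-t^{\star}}\big)^{\frac{d(\gamma-1)}{2\gamma}}G_{\gamma\beta_{0}t}(\xi-x)$. For the martingale factor, BDG combined with the deterministic bound $|Z(s)|\leq d\,\textrm{e}^{2(\sqrt{t}-\sqrt{t-s})\|\nabla b\|_{\tau\rightarrow\tau+t}}$ from Lemma~\ref{lem:5-1} yields $C(\gamma')\,d\sqrt{t^{\star}}\,\textrm{e}^{2(\sqrt{t}-\sqrt{t-t^{\star}})\|\nabla b\|_{\tau\rightarrow\tau+t}}$ for the $L^{\gamma'}$-norm of the stochastic integral.

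Finally I would choose $t^{\star}=\tfrac{7t}{16}$, so that $\sqrt{t-t^{\star}}=\tfrac{3}{4}\sqrt{t}$, the BDG exponent simplifies to $\tfrac12\sqrt{t}\|\nabla b\|_{\tau\rightarrow\tau+t}$, and $t/(t-t^{\star})=16/9$ is a universal constant. Multiplying the two bounds, the prefactor $\tfrac{1}{t^{\star}}\sqrt{t^{\star}}\sim\tfrac{1}{\sqrt{t}}$ together with $\gamma\beta_{0}\leq\beta$ produces the desired estimate (with a harmless $\sqrt{d}$ absorbed into $C_{1}$ when passing from each component $\partial_{x^{j}}p_{b}$ to $|\nabla p_{b}|$). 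The main obstacle is the simultaneous calibration of the three parameters $(t^{\star},\gamma,\beta_{0})$: $t^{\star}$ governs the exponent on $\|\nabla b\|$, $\gamma$ must lie in $(1,\sqrt{\beta})$ for Lemma~\ref{lem:4.4} to deliver $\gamma\beta_{0}\leq\beta$, and the BDG constant $C(\gamma')$ degrades as $\gamma\downarrow 1$; one must show these constraints are jointly satisfiable while producing exactly the prefactor $\tfrac{1}{\sqrt{t}}$ and the exponent $\tfrac{1}{2}\sqrt{t}\|\nabla b\|_{\tau\rightarrow\tau+t}$.
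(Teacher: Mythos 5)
Your proposal is correct and follows essentially the same route as the paper's proof: the truncated choice of $\rho$ in the Bismut-type formula, H\"older's inequality to separate a density factor (handled by Lemma~\ref{lem:4.4}) from a martingale factor (handled by BDG and the bound on $Z$ from Lemma~\ref{lem:5-1}). The only differences are cosmetic --- the paper takes the cutoff at $t/2$ and extracts the exponent $\tfrac12\sqrt{t}\left\Vert \nabla b\right\Vert_{\tau\rightarrow\tau+t}$ via the inequality $\frac{e^{x}-1}{x}\leq e^{x}$ rather than via your choice $t^{\star}=\tfrac{7t}{16}$, and your calibration of $(\gamma,\beta_{0})$ to land on $G_{\beta t}$ is in fact spelled out more carefully than in the paper.
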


\begin{proof}
According to the forward Bismut-type formula (see Theorem \ref{thm:bis-f}),
\[
\partial_{x^{j}}\ln p_{b}(\tau,\xi,T+\tau,x)
=\mathbb{Q}\Big[\int_{0}^{T}\frac{\rho'(t)}{\rho(T)}Z_{j}^{k}(t)\,\textrm{d}B^{k}(t)\Big],
\]
where $\mathbb{Q}$ is the conditional law that $Y_{\tau}=\xi$ and
$Y_{T+\tau}=x$, and $\rho(t)$ can be any continuous and piecewise
differentiable function with $\rho(0)=0$ and $\rho(t)>0$ for $t>0$.
Let $\varepsilon>0$ be
small, and let $\rho$ be the function such
that $\rho(t)=t$ for $t\in[0,T-\varepsilon]$ and $\rho(t)=T-\varepsilon$
for $t>T-\varepsilon$. Then
\begin{align*}
\partial_{x^{j}}\ln p_{b}(\tau,\xi,T+\tau,x) & =\frac{1}{T-\varepsilon}\mathbb{Q}\Big[\int_{0}^{T-\varepsilon}Z_{j}^{k}(t)\,\textrm{d}B^{k}(t)\Big]
 =\frac{1}{T-\varepsilon}\mathbb{P}\Big[R_{T-\varepsilon}\int_{0}^{T-\varepsilon}Z_{j}^{k}(t)\,\textrm{d}B^{k}(t)\Big],
\end{align*}
where
\[
R_{T-\varepsilon}=\frac{p_{b}(\tau,\xi,\tau+\varepsilon,Y_{T-\varepsilon})}{p_{b}(\tau,\xi,T+\tau,x)}
\]
according to (\ref{eq:den-01}), and $Y$ is the solution of the Cauchy problem
(\ref{eq:n1}) for the stochastic differential equations. Therefore, we have
\begin{align*}
\left|\nabla\ln p_{b}\right|(\tau,\xi,T+\tau,x) & \leq\frac{C_{q}}{T-\varepsilon}\sqrt[p]{\mathbb{E}
\Big[\Big(\frac{p_{b}(\tau,\xi,\tau+\varepsilon,Y_{T-\varepsilon})}{p_{b}(\tau,\xi,T+\tau,x)}\Big)^{p}\Big]}
\sqrt[q]{\mathbb{E}\Big[\Big(\int_{0}^{T-\varepsilon}|Z(t)|^{2}\,\textrm{d}t\Big)^{\frac{q}{2}}\Big]},
\end{align*}
where $C_q$ is the constant in the Burkholder-Davis-Gundy inequality
({\it cf.} \citep{Ikeda and Watanabe 1989}) that is applied
to handle the It\^o integral with $\frac{1}{p}+\frac{1}{q}=1$.
Thanks to (\ref{Z-estimate}),
\begin{align*}
|Z(t)|  \leq|Z(0)|\textrm{e}^{2\left\Vert \nabla b\right\Vert _{\tau\rightarrow\tau+T}(\sqrt{T}-\sqrt{T-t})}
\leq|Z(0)|\textrm{e}^{2\left\Vert \nabla b\right\Vert _{\tau\rightarrow\tau+T}\frac{t}{\sqrt{T}}}
\qquad \mbox{for all $t\in[0,T]$},
\end{align*}
where $|Z(0)|=d$ is the dimension, or the norm
of the identity matrix. It follows that
\begin{align*}
\sqrt[q]{\mathbb{E}\Big[\Big(\int_{0}^{T-\varepsilon}|Z(t)|^{2}\,\textrm{d}t\Big)^{\frac{q}{2}}\Big]}
\leq|Z(0)|\sqrt[q]{\mathbb{E}\Big[\Big(\int_{0}^{T-\varepsilon}\textrm{e}^{At}
\,\textrm{d}t\Big)^{\frac{q}{2}}\Big]}
\leq|Z(0)|\sqrt{\frac{\textrm{e}^{A(T-\varepsilon)} -1}{A(T-\varepsilon)}},
\end{align*}
where $A:=\frac{2\left\Vert \nabla b\right\Vert _{\tau\rightarrow\tau+T}}{\sqrt{T}}$.

Plugging into the previous inequality, we obtain
\begin{align*}
\left|\nabla\ln p_{b}\right|(\tau,\xi,T+\tau,x)
\leq\frac{C_{q}|Z(0)|}{T-\varepsilon}\sqrt{\frac{\textrm{e}^{A(T-\varepsilon)}-1}{A(T-\varepsilon)}}
\sqrt[p]{\mathbb{E}
\Big[\Big(\frac{p_{b}(\tau,\xi,\tau+\varepsilon,Y_{T-\varepsilon})}{p_{b}(\tau,\xi,T+\tau,x)}\Big)^{p}\Big]}.
\end{align*}
The estimate is true for any $p>1$, so is for $1\leq q<\infty$. It follows
that
\begin{align*}
\left|\nabla_{x}p_{b}\right|(\tau,\xi,T+\tau,x)
\leq\frac{C_{q}|Z(0)|}{\sqrt{T-\varepsilon}}\sqrt{\frac{\textrm{e}^{A(T-\varepsilon)}-1}{A(T-\varepsilon)}}
 \sqrt[p]{\int_{\mathbb{R}^{d}}\big(p_{b}(\tau,\xi,\tau+\varepsilon,z)\big)^{p}p_{-b}(0,x,T-\varepsilon,z)\,\textrm{d}z}
\end{align*}
for any $p>1$.
By choosing $\varepsilon=\frac{T}{2}$ and using the fact that
$\frac{\textrm{e}^{x}-1}{x}\leq\textrm{e}^{x}$ for $x>0$, the previous
inequality yields that
\begin{align*}
\left|\nabla p_{b}\right|(\tau,\xi,T+\tau,x)
\leq\sqrt{2}C_{q}|Z(0)|\frac{1}{\sqrt{T}}e^{\frac{\sqrt{T}\left\Vert \nabla b\right\Vert _{\tau\rightarrow\tau+T}}{2}}
\sqrt[p]{\int_{\mathbb{R}^{d}}
 \big(p_{b}(\tau,\xi,\tau+\frac{T}{2},z)\big)^{p}p_{-b}(0,x,\frac{T}{2},z)\,\textrm{d}z}
\end{align*}
for any $p>1$. Finally, according to Lemma \ref{lem:4.4}, for any
$\beta>p$,
\begin{align*}
J(\varepsilon)  =\sqrt[p]{\int_{\mathbb{R}^{d}}\big(p_{b}(\tau,\xi,\tau+\varepsilon,z)\big)^{p}
 p_{-b}(0,x,T-\varepsilon,z)\,\textrm{d}z}
\leq K_{1}\textrm{e}^{K_{2}T\left\Vert b\right\Vert _{\infty}^{2}}\Big(\frac{T}{\varepsilon}\Big)^{\frac{d}{2}(1-\frac{1}{\gamma})}G_{\gamma\beta T}(\xi-x).
\end{align*}
Therefore, we conclude that
\[
\left|\nabla p_{b}\right|(\tau,\xi,T+\tau,x)\leq\frac{K_{3}}{\sqrt{T}}\textrm{e}^{K_{2}T\left\Vert b\right\Vert _{\infty}^{2}+\frac{1}{2}\sqrt{T}\left\Vert \nabla b\right\Vert _{\tau\rightarrow\tau+T}}G_{\gamma\beta T}(\xi-x),
\]
which yields the estimate.
\end{proof}

\section{Linearized Vorticity Equations}

In this section, we exploit
the explicit estimates in \S 3 --\S 5
to the study of
the linearized vorticity equations
for the
the Navier-Stokes equations
\eqref{t-02}
with the viscosity constant
$\nu=\frac{1}{2}$ (without loss of generality).

We make the following identification: Any tensor field $F$ defined
on $[0,1]^{3}$ satisfying the periodic condition that $F(x+\bold{k})=F(x)$,
for $i=1,2,3$, $x\in[0,1]^{3}$, and $\bold{k}\in\mathbb{Z}^{3}$,
will be identified with its periodic
extension (with period $1$) on $\mathbb{R}^{3}$.

\subsection{Linear parabolic equations arising from the vorticity equations}

Our construction of strong solutions of
the Cauchy problem \eqref{t-02}--\eqref{ID}
of the Navier-Stokes equations \eqref{t-02}
with $\nu=\frac{1}{2}$ will be based on
the vorticity equation for $\omega=\nabla\wedge u$:
\begin{equation}\label{vort-p1}
\partial_t\omega+ (u\cdot\nabla)\omega-A(u)\omega-\frac{1}{2}\Delta\omega=0,
\end{equation}
where $A(u)$ is the total derivative of $u$, which is a tensor with
its components $A(u)_{j}^{i}=\partial_{x^{j}}u^{i}$.
Although there are several formulations of the vorticity equations,
we will explain that this version serves our aims well. The crucial
observation is based on an elementary identity:
\begin{equation}
\nabla\wedge\left((u\cdot\nabla)u\right)=(u\cdot\nabla)\omega-A(u)\omega\label{eq:9-04}
\end{equation}
so that the vorticity equations are equivalent to
\begin{equation}\label{eq:9-05}
\nabla\wedge\big(\partial_t u+(u\cdot\nabla)u-\frac{1}{2}\Delta u\big)=0,
\end{equation}
which is not surprising though.
One can even argue that this is from where
the vorticity equations come. However, this formulation allows
us to define an iteration for the construction of strong solutions
of the Navier-Stokes equations.

More precisely, we start with a periodic, smooth, and divergence-free
vector field $b(x,t)$ such that $b(x,0)=u_{0}(x)$,
and we then want to construct a nonlinear mapping that sends $b(x,t)$
to $v(x,t)$, which will be denoted by $V$, so that $v=V(b)$.
This is achieved by the two steps:

\medskip
{\it Step} 1. For given $b(x,t)$,
we define a vector field $w(x,t)$ by solving the following
Cauchy problem
of the linear parabolic equations:
\begin{equation}\label{eq:9-06}
\begin{cases}
\partial_t w+(b\cdot\nabla)w-A(b)w-\frac{1}{2}\Delta w=0,\\
w(\cdot,0)=\omega_{0},
\end{cases}
\end{equation}
where $A(b)=(\partial_{x^j} b^{i})_{1\le i,j\le 3}$ is the total
derivative of $b$; this $w$ should be a candidate of the vorticity,
while it is not the vorticity of $b$.
There are two properties of the unique solution
$w(x,t)$ which are important to our approach.

First, $w$ is again
divergence-free. In fact, the divergence of $w(x,t)$, denoted by $f(x,t)$,
{\it i.e.}, $f=\nabla\cdot w$, satisfies the following parabolic equation:
\begin{equation}\label{eq:9-07}
\begin{cases}
\partial_t f+(b\cdot\nabla)f-\frac{1}{2}\Delta f=0,\\
f(\cdot,0)=0.
\end{cases}
\end{equation}
This equation may be obtained by differentiating (\ref{eq:9-06})
and using the following elementary vector identity:
\begin{equation}\label{eq:9-08}
\nabla\cdot\left((b\cdot\nabla)w\right)
=b\cdot\nabla(\nabla\cdot w)+\nabla\cdot\left(A(b)w\right),
\end{equation}
which holds for any vector
field $w$ and any $b$ such that $\nabla\cdot b=0$.
By the uniqueness of linear parabolic equation, we conclude
that $f=0$, so that $w(x,t)$ is divergence-free.

Second, we
show that the mean: $m(t)=\varint_{[0,1]^{3}}w(x,t)\,\textrm{d}x=0$.
In fact, since both $b$ and $w$ are divergence-free and periodic,
then
\[
\int_{[0,1]^{3}}(b\cdot\nabla)w\,\textrm{d}x
=\int_{[0,1]^{3}}A(b)w\,\textrm{d}x=\frac{1}{2}\int_{[0,1]^{3}}\Delta w\,\textrm{d}x=0,
\]
so that $m(t)=m(0)=0$.

\medskip
{\it Step} 2. We define the candidate for the velocity $v(x,t)$ by solving
the Poisson equation:
\begin{equation}\label{6.10a}
\Delta v=-\nabla\wedge w
\end{equation}
at any instance such
that $\int_{[0,1]^{3}}v(x,\cdot)\,\textrm{d}x=0$, which has a unique
solution satisfying the periodic condition.
Applying $\nabla\cdot$
to both sides of the Poisson equation, we obtain that $\Delta\left(\nabla\cdot v\right)=0$,
so that $\nabla\cdot v$ must be constant for every $t$.
Since $\int_{[0,1]^{3}}\nabla\cdot v\, {\rm d}x=0$,
then $\nabla\cdot\nu=0$. Therefore, $\nabla\cdot v=0$ and $\nabla\cdot w=0$.
The Poisson equation for $v$ implies that
\begin{align*}
&\nabla\wedge\left(\nabla\wedge v-w\right)=-\Delta v-\nabla\wedge w=0,\\
&\nabla\cdot\left(\nabla\wedge v-w\right)=-\nabla\cdot w=0.
\end{align*}
Then, according to the Hodge theory, $\nabla\wedge v-w$ vanishes identically,
so it follows that $w=\nabla\wedge v$. In particular, $v(x,0)=u_{0}(x)$.

Therefore, in this way, we are able to construct an iteration via the nonlinear mapping $V$
so that $v=V(b)$.

\smallskip
The advantage for using this iteration via the nonlinear mapping
$b\rightarrow v=V(b)$ can now be put forward in the following: Observe
that the parabolic equation (\ref{eq:9-06}) can be rewritten as
\[
\begin{cases}
\left(\partial_t-L_{-b}\right)w=A(b)w,\\
w(\cdot,0)=\omega_{0},
\end{cases}
\]
where, as before, $L_{-b}$ denotes the time-dependent elliptic operator
$\frac{1}{2}\Delta-b\cdot\nabla$. The crucial difference from the
linearised Navier-Stokes equations lies in the fact that the term on the
right-hand side, $A(b)w$, is a linear zero-order term. This crucial
difference allows us to apply the Feynman-Kac-type formula to obtain the
necessary \emph{a priori} estimates, which will be derived in the
remainder of this section.

\subsection{Brownian motion on $\mathbb{T}^{3}$}

Let $h(\tau,x,t,y)$ denote the heat kernel on $\mathbb{T}^{3}$,
which is the transition probability density function of the Brownian
motion on $\mathbb{T}^{3}$, a diffusion on $\mathbb{T}^{3}$ with
its infinitesimal generator $\frac{1}{2}\Delta$. Then
\begin{equation}\label{hk-01}
h(\tau,x,t,y)=\sum_{\boldsymbol{k}\in\mathbb{Z}^{3}}G_{t-\tau}(y-x+\boldsymbol{k})
\qquad \mbox{for $x,y\in[0,1]^{3}$},
\end{equation}
where the series on the right-hand side and
its derivative series indeed converge uniformly for $(x,y)\in[0,1]^{3}$.

Since $\mathbb{T}^{3}$ is a compact manifold without boundary, there
is a Green function $C(x,y)$ associated with the Laplacian $\Delta$,
denoted by $C(x,y)$ ({\it cf.}
\citep[page 108]{Aubin1998}), which possesses
the following properties:

\begin{enumerate}
\item[\rm (i)] $C(x,y)$ is smooth out off the diagonal, periodic in $x,y\in[0,1]^{3}$, and
\[
|\nabla_{x}^{k}C(x,y)|\leq\frac{C_{1}}{|x-y|^{k}} \qquad\mbox{for $x,y\in[0,1]^{3}$},
\]
where $k=0,1,2$, $C_{1}$ is a constant, and $\int_{[0,1]^{3}}C(x,y)\,\textrm{d}y=0$.
In particular,
\[
\int_{[0,1]^{3}}|\nabla_{x}^{k}C(x,y)|\,\textrm{d}y\leq C_{2}
\qquad\,\,\mbox{for all $x\in\mathbb{R}^{3}$ and $k=0,1,2$},
\]
where $C_{2}$ is a universal constant.

\item[\rm (ii)]
For every periodic $C^{2}$ function $\psi$, the Green formula holds:
\[
\psi(x)=\int_{[0,1]^{3}}\psi(y)\,\textrm{d}y
 +\int_{[0,1]^{3}}C(x,y)\Delta\psi(y)\,\textrm{d}y.
\]
\end{enumerate}

It follows that, for any periodic function $f$ with mean zero: $\int_{[0,1]^{3}}f(y)\,\textrm{d}y=0$,
the Poisson equation:
\[
\Delta\psi=f
\]
has a unique periodic solution with mean zero, given by the Green formula:
\[
\psi(x)=\int_{[0,1]^{3}}C(x,y)f(y)\,\textrm{d}y \qquad\,\, \mbox{for $x\in\mathbb{R}^{3}$}.
\]

Let $b(\cdot,t)$ be a vector field on $[0, 1]^3$
depending on $t\geq0$, which is identified with a vector field
\[
b(x,t)=(b^{1}(x,t),b^{2}(x,t),b^{3}(x,t)) \qquad\mbox{ on $\mathbb{R}^{3}$}
\]
with period $1$ along each coordinate.
Let $\mathcal{L}_{b}=\frac{1}{2}\Delta+b(\cdot,t)\cdot\nabla$
be an elliptic operator on $[0, 1]^3$.
Let $h_{b}(\tau,x,t,y)$
denote the transition probability density function of the $\mathcal{L}_{b}$-diffusion
on $[0, 1]^3$.
Then
\[
h_{b}(\tau,x,t,y)=\sum_{\boldsymbol{k}\in\mathbb{Z}^{3}}p_{b}(\tau,x,t,y+\boldsymbol{k})
\qquad\mbox{for $x,y\in[0,1]^{3}$},
\]
where $p_{b}(\tau,x,t,y)$ is the transition
probability density function on $\mathbb{R}^{3}$ with infinitesimal
generator $L_{b}=\frac{1}{2}\Delta+b(\cdot,t)\cdot\nabla$ on $\mathbb{R}^{3}$.

\subsection{\emph{A priori} estimates for the linearized vorticity equations}

In this subsection, we establish several {\it a priori} estimates for the first step of the
iteration for solving the vorticity equation, defined in the
following: Let $u_{0}$ be a given smooth, periodic initial velocity vector
field with divergence-free, and $\omega_{0}=\nabla\wedge u_{0}$,
and let $b(x,t)$ be a smooth
time-dependent divergence-free periodic vector field on $\mathbb{R}^{3}$
with $b(x,0)=u_{0}(x)$.
Hence, $u_{0}(x+\bold{k})=u_{0}(x)$ and
$b(x+\bold{k},t)=b(x,t)$ for $\bold{k}\in\mathbb{Z}^{3}$
and $x\in\mathbb{R}^{3}$.
Define the periodic vorticity field $w(x,t)$
to be the unique solution of the Cauchy problem \eqref{eq:9-06}
for the linear vorticity equations.

The vector field $v(x,t)$ is the unique solution with mean zero to
the Poisson equation \eqref{6.10a} on $\mathbb{T}^{3}$,
which may be given by the Green formula:
\[
v^{i}(x,t)=\int_{[0,1]^{3}}\varepsilon^{ijk}C(x,y)\partial_{y^{j}}w^{k}(y,t)\,\textrm{d}y
\qquad \mbox{for $t\geq0$ and $x\in\mathbb{T}^{3}$}.
\]
We have shown that both $v$ and
$w$ are divergence-free, and $w=\nabla\times v$.

Moreover,
the solution $w(x,t)$ of the Cauchy problem \eqref{eq:9-06}
satisfies
the following implicit equation:
\begin{equation}
w(x,t)=\int_{[0,1]^{3}}h_{-b}(\tau,\xi,t,x)w(\xi,\tau)\,\textrm{d}\xi
+\int_{\tau}^{t}\int_{[0,1]^{3}}h_{-b}(s,\xi,t,x)A(b)(\xi,s)w(\xi,s)
\,\textrm{d}\xi\textrm{d}s\label{linear-re1}
\end{equation}
for $t>\tau\geq0$ and $x\in[0,1]^{3}$. On the other hand, by the
forward Feynman-Kac's formula \eqref{eq:F-K-f01}, we have the following
nonlinear representation:
\begin{equation}\label{a-22-4}
w^{k}(x,t)=\int_{[0,1]^{3}}\omega_{0}^{j}(\xi)h_{b}(0,\xi,t,x)\mathbb{P}^{\xi}
\big[Q_{j}^{k}(0,t)|X_{t}=x\big]\,\textrm{d}\xi  \qquad\mbox{for $k=1,2,3$},
\end{equation}
where $X$ is the canonical process on the path space
$\varOmega=C([0,\infty),\mathbb{T}^{3})$, $\mathbb{P}^{\xi}=\mathbb{P}^{0,\xi}$ is
the diffusion family on $\varOmega$ with infinitesimal generator
${\mathcal{L}}_{b}$, and $Q(s)=Q(s,t)$ is the solution to
\begin{equation}\label{a-22-6}
\begin{cases}
\frac{\textrm{d}}{\textrm{d}s}Q_{j}^{i}(s)=-Q_{k}^{i}(s)A(b)_{j}^{k}(X_{s},s)\qquad\,\,\mbox{ for $s\leq t$},\\[1mm]
Q_{j}^{i}(t)=\delta_{j}^{i}.
\end{cases}
\end{equation}

We now
derive several \emph{a priori} estimates.

\begin{lem}\label{lem6.1}
Let $\eta=(\eta(s))_{s\geq0}$ be any continuous curve in $\mathbb{R}^{3}$,
and let $G(s)=(G_{j}^{i}(s))_{s\leq t}$ be the unique
solution of the Cauchy problem for the ordinary differential equations{\rm :}
\begin{equation}\label{eq:X-e2-1-1}
\begin{cases}
\frac{\textrm{\rm d}}{\textrm{\rm d}s}G_{j}^{i}(s)=-G_{k}^{i}(s)A(b)_{j}^{k}(\eta(s),s)
   \qquad\,\mbox{ for $0\leq s\leq t$},\\[1mm]
G_{j}^{i}(t)=\delta_{j}^{i},
\end{cases}
\end{equation}
for $i,j=1,2,3$. Then
\begin{equation}
\sum_{i,j}G_{j}^{i}(0)G_{j}^{i}(0)
\leq9\textrm{e}^{4\sqrt{t}\left\Vert \nabla b\right\Vert _{0\rightarrow t}}.\label{a-22-03}
\end{equation}
\end{lem}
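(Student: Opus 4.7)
The plan is to adapt the Gronwall-type argument from the proof of Lemma~\ref{lem:5-1} (where a similar bound was established for the stochastic flow matrix $Z$) to the purely deterministic backward ODE \eqref{eq:X-e2-1-1}. Introduce the scalar quantity $F(s):=\sum_{i,j}G_{j}^{i}(s)\,G_{j}^{i}(s)$; then the estimate \eqref{a-22-03} is exactly an upper bound on $F(0)$.

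The first step is to differentiate $F$ and substitute the ODE to obtain
\[
F'(s)=-2\sum_{i,j,k}G_{j}^{i}(s)\,G_{k}^{i}(s)\,A(b)_{j}^{k}(\eta(s),s).
\]
Splitting the triple sum as $\sum_{i}\sum_{j,k}$ and recognising the inner double sum (for each fixed $i$) as a quadratic form in the vector $\bigl(G_{j}^{i}(s)\bigr)_{j=1,2,3}$ with matrix $A(b)(\eta(s),s)$, I would bound it by the operator norm of $A(b)$ times $\sum_{j}(G_{j}^{i})^{2}$, and then sum over $i$ to get the pointwise differential inequality
\[
|F'(s)|\leq 2\,|\nabla b(\eta(s),s)|\,F(s).
\]

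The second step is to integrate this inequality backwards from the terminal time $t$, where the terminal condition $G_{j}^{i}(t)=\delta_{j}^{i}$ gives $F(t)=3$. This produces, for $\sigma\in[0,t]$,
\[
F(\sigma)\leq F(t)+2\int_{\sigma}^{t}|\nabla b(\eta(s),s)|\,F(s)\,\textrm{d}s,
\]
which under the time-reversal $s\mapsto t-s$ becomes a standard forward Gronwall inequality; its conclusion, rewritten in the original variable, is
\[
F(\sigma)\leq F(t)\,\exp\!\Big(2\int_{\sigma}^{t}|\nabla b(\eta(s),s)|\,\textrm{d}s\Big).
\]
The third step invokes the definition of the parabolic norm: $|\nabla b(x,s)|\leq\|\nabla b\|_{0\to t}/\sqrt{s}$ for $s\in(0,t]$, and $\int_{0}^{t}s^{-1/2}\,\textrm{d}s=2\sqrt{t}$. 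Setting $\sigma=0$ then gives
\[
F(0)\leq 3\,\exp\!\bigl(4\sqrt{t}\,\|\nabla b\|_{0\to t}\bigr)\leq 9\,\exp\!\bigl(4\sqrt{t}\,\|\nabla b\|_{0\to t}\bigr),
\]
which is exactly \eqref{a-22-03}; the factor $9=d^{2}$ on the right-hand side is a convenient slack consistent with the $|Z(0)|=d$ convention employed in Lemma~\ref{lem:5-1}.

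The argument is entirely deterministic, with no It\^o term or conditioning to manage, so I do not foresee a substantive obstacle. The only point demanding a small amount of care is the matrix-norm bookkeeping in the first step: bounding $\sum_{i,j,k}G_{j}^{i}G_{k}^{i}A(b)_{j}^{k}$ by $|\nabla b|\,F$ with the precise constant $2$ is what ensures that the final exponent comes out to be exactly $4\sqrt{t}\|\nabla b\|_{0\to t}$, rather than a larger dimension-dependent multiple of it.
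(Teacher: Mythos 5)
Your proposal is correct and follows essentially the same route as the paper: both differentiate the squared Hilbert--Schmidt norm $\sum_{i,j}G_{j}^{i}(s)G_{j}^{i}(s)$, bound the resulting quadratic form by $2|\nabla b(\eta(s),s)|$ times that norm, invoke $|\nabla b(\cdot,s)|\leq\|\nabla b\|_{0\to t}/\sqrt{s}$, and integrate $\int_{0}^{t}s^{-1/2}\,\textrm{d}s=2\sqrt{t}$ to obtain the exponent $4\sqrt{t}\|\nabla b\|_{0\to t}$. The only cosmetic difference is that you phrase the integration step as a Gronwall inequality after time reversal, while the paper integrates the logarithmic derivative directly; both yield $F(0)\leq 3\,\textrm{e}^{4\sqrt{t}\|\nabla b\|_{0\to t}}\leq 9\,\textrm{e}^{4\sqrt{t}\|\nabla b\|_{0\to t}}$.
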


\begin{proof}
For simplicity of notation, we set
$$
f(s):=\sum_{i,j}G_{j}^{i}(s)G_{j}^{i}(s),
$$
which is the squared Hilbert-Schmidt's norm of $(G_{j}^{i}(s))$.
Then
\begin{align*}
\frac{\textrm{d}f(s)}{\textrm{d}s} & =2G_{j}^{i}(s)\frac{\textrm{d}}{\textrm{d}s}G_{j}^{i}(s)\\
 & =-2G_{k}^{i}(s)G_{j}^{i}(s)A(b)_{j}^{k}(\eta(s),s)\\
 & \geq-2|\nabla b(\eta(s),s)|f(s)\\
 & \geq-\frac{2}{\sqrt{s}}\left\Vert \nabla b\right\Vert _{0\rightarrow t}f(s)
 \qquad \mbox{ for $s\in[0,t]$}.
\end{align*}
After integration, we obtain
\begin{align*}
\ln f(t)-\ln f(0)
\geq-2\left\Vert \nabla b\right\Vert _{0\rightarrow t}\int_{0}^{t}\frac{1}{\sqrt{s}}\,\textrm{d}s
 =-4\sqrt{t}\left\Vert \nabla b\right\Vert _{0\rightarrow t}.
\end{align*}
Then
\[
G(0)\leq9\textrm{e}^{4\sqrt{t}\left\Vert \nabla b\right\Vert _{0\rightarrow t}}
\qquad\mbox{for all $t\geq0$}.
\]
\end{proof}

\begin{lem}\label{lem:6.2}
For every $\beta>1$, there are universal constants
$C_{1}$ and $C_{2}$ depending only on $\beta$ such that
\begin{equation}\label{a-22-8}
|w(x,t)|\leq C_{1}\textrm{e}^{C_{2}t\left\Vert b\right\Vert _{\infty}^{2}+2\sqrt{t}\left\Vert \nabla b\right\Vert _{0\rightarrow t}}G_{\beta t}(|\omega_{0}|)(x)
\qquad\,\mbox{for all $t>0$ and $x\in \mathbb{T}^{3}$}.
\end{equation}
\end{lem}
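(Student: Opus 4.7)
\medskip

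The plan is to combine three ingredients: the forward Feynman-Kac representation (\ref{a-22-4}), the pathwise bound from Lemma \ref{lem6.1}, and the heat-kernel upper estimate from Theorem \ref{prop:4.3}. The key observation is that the ODE (\ref{a-22-6}) satisfied by $Q(s,t,\varphi)$ along a Brownian sample path $\varphi$ is \emph{exactly} of the form (\ref{eq:X-e2-1-1}) with $\eta(s)=\varphi(s)$, so Lemma \ref{lem6.1} applies pathwise and yields, uniformly in $\varphi$,
\[
\sum_{i,j} |Q_{j}^{i}(0,t,\varphi)|^{2} \leq 9\,\textrm{e}^{4\sqrt{t}\left\Vert \nabla b\right\Vert _{0\rightarrow t}},
\]
whence $|Q_{j}^{k}(0,t,\varphi)|\leq 3\,\textrm{e}^{2\sqrt{t}\left\Vert \nabla b\right\Vert _{0\rightarrow t}}$ for every $\varphi$, $j,k$. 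This bound is deterministic and in particular survives under the conditional expectation $\mathbb{P}^{\xi}[\,\cdot\,|\,X_{t}=x]$.

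Next, I would use the relation between the torus heat kernel and the Euclidean one,
\[
h_{b}(0,\xi,t,x)=\sum_{\boldsymbol{k}\in\mathbb{Z}^{3}}p_{b}(0,\xi,t,x+\boldsymbol{k}),
\]
together with the Gaussian upper bound of Theorem \ref{prop:4.3}, to conclude
\[
h_{b}(0,\xi,t,x)\leq C_{1}\textrm{e}^{C_{2}t\left\Vert b\right\Vert _{\infty}^{2}}\sum_{\boldsymbol{k}\in\mathbb{Z}^{3}}G_{\beta t}(x-\xi+\boldsymbol{k})
\]
with constants depending only on $\beta$ and $d=3$.

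Plugging both bounds into (\ref{a-22-4}) and using that the pathwise bound on $Q$ does not depend on $\varphi$, so that it factors out of the conditional expectation, I obtain
\[
|w(x,t)|\leq 3C_{1}\textrm{e}^{C_{2}t\left\Vert b\right\Vert _{\infty}^{2}+2\sqrt{t}\left\Vert \nabla b\right\Vert _{0\rightarrow t}}\int_{[0,1]^{3}}|\omega_{0}(\xi)|\sum_{\boldsymbol{k}\in\mathbb{Z}^{3}}G_{\beta t}(x-\xi+\boldsymbol{k})\,\textrm{d}\xi.
\]
Finally, periodicity of $|\omega_{0}|$ lets me unfold the sum-integral over $[0,1]^{3}\times\mathbb{Z}^{3}$ into a single integral over $\mathbb{R}^{3}$, which is precisely the heat convolution $G_{\beta t}(|\omega_{0}|)(x)$ appearing on the right of (\ref{a-22-8}). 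Absorbing the numerical factor $3$ into $C_{1}$ completes the proof.

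The argument is essentially a bookkeeping exercise once the three pieces are in place; the only subtle point to make sure of is the pathwise nature of the Lemma \ref{lem6.1} bound (it depends only on $\left\Vert \nabla b\right\Vert _{0\rightarrow t}$, not on the curve $\eta$), which is what allows us to pull it cleanly through the conditional expectation against the pinned bridge measure. Tracking how the constants in Theorem \ref{prop:4.3} depend on $\beta$ is the place one must be most careful, but these constants have already been established and are inherited directly.
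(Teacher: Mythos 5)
Your proposal is correct and follows essentially the same route as the paper: a pathwise application of Lemma \ref{lem6.1} to bound $Q(0,t,\varphi)$ uniformly in $\varphi$, inserted into the Feynman--Kac representation \eqref{a-22-4}, combined with the Gaussian bound of Theorem \ref{prop:4.3} for $p_{b}$ and the unfolding of the periodised kernel $h_{b}=\sum_{\boldsymbol{k}}p_{b}(\cdot,\cdot,\cdot,\cdot+\boldsymbol{k})$ into the whole-space convolution $G_{\beta t}(|\omega_{0}|)(x)$. No gaps; this matches the paper's argument step for step.
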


\begin{proof}
The estimate in Lemma \ref{lem6.1} allows us to control the conditional
expectation in \eqref{a-22-4} and to obtain
\begin{align*}
|w(x,t)| & \leq3\textrm{e}^{2\sqrt{t}\left\Vert \nabla b\right\Vert _{0\rightarrow t}}\int_{[0,1]^{3}}|\omega_{0}(\xi)|h_{b}(0,\xi,t,x)\,\textrm{d}\xi\\
 & =3\textrm{e}^{2\sqrt{t}\left\Vert \nabla b\right\Vert _{0\rightarrow t}}\int_{[0,1]^{3}}|\omega_{0}(\xi)|
  \sum_{\bold{k}\in\mathbb{Z}^{3}}p_{b}(0,\xi,t,x+\bold{k})\,\textrm{d}\xi.
\end{align*}
Using the uniform estimate ({\it cf.} Theorem \ref{prop:4.3}):
\[
p_{b}(0,\xi,t,x)\leq C_{1}\textrm{e}^{C_{2}t\left\Vert b\right\Vert _{\infty}^{2}}G_{\beta t}(x-\xi),
\]
we obtain
\begin{align*}
|w(x,t)| & \leq3\textrm{e}^{2\sqrt{t}\left\Vert \nabla b\right\Vert _{0\rightarrow t}}\int_{[0,1]^{3}}|\omega_{0}(\xi)|
 \sum_{\bold{k}\in\mathbb{Z}^{3}}G_{\beta t}(x+\bold{k}-\xi)\,\textrm{d}\xi\\
 & =3C_{1}\textrm{e}^{C_{2}t\left\Vert b\right\Vert _{\infty}^{2}+2\sqrt{t}\left\Vert \nabla b\right\Vert _{0\rightarrow t}}\int_{[0,1]^{3}}|\omega_{0}(\xi)|\sum_{\bold{k}\in\mathbb{Z}^{3}}G_{\beta t}(x+\bold{k}-\xi)\,\textrm{d}\xi\\
 & =3C_{1}\textrm{e}^{C_{2}t\left\Vert b\right\Vert _{\infty}^{2}
   +2\sqrt{t}\left\Vert \nabla b\right\Vert _{0\rightarrow t}}
    \int_{\mathbb{R}^{3}}|\omega_{0}(\xi)|G_{\beta t}(x-\xi)\,\textrm{d}\xi\\
 & =3C_{1}\textrm{e}^{C_{2}t\left\Vert b\right\Vert _{\infty}^{2}
   +2\sqrt{t}\left\Vert \nabla b\right\Vert _{0\rightarrow t}}G_{\beta t}(|\omega_{0}|)(x).
\end{align*}
\end{proof}

With Lemmas 6.1--6.2, we now establish the following three theorems.
\begin{thm}\label{thm:6.3}
For every $\beta>1$, there are two positive constants
$C_{1}$ and $C_{2}$ depending only on $\beta$ such that
\[
|\nabla w(x,t)|\leq\frac{C_{1}}{\sqrt{t}}\textrm{e}^{C_{2}t\left\Vert b\right\Vert _{\infty}^{2}+3\sqrt{t}\left\Vert \nabla b\right\Vert _{0\rightarrow t}}G_{\beta t}(|\omega_{0}|)(x)
\qquad\,\mbox{for all $t>0$ and $x\in \mathbb{T}^{3}$.}
\]
\end{thm}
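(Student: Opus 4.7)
The plan is to differentiate the implicit linear representation \eqref{linear-re1} (with $\tau=0$) in $x$, moving the gradient onto the periodic heat kernel $h_{-b}$, and then to combine the gradient estimate of Theorem \ref{thm:4.5} with the pointwise bound for $w$ from Lemma \ref{lem:6.2}. Writing the periodic kernel as $h_{-b}(s,\xi,t,x)=\sum_{\boldsymbol{k}\in\mathbb{Z}^{3}}p_{-b}(s,\xi,t,x+\boldsymbol{k})$ and applying Theorem \ref{thm:4.5} termwise, together with the monotonicity $\|\nabla b\|_{s\to t}\le\|\nabla b\|_{0\to t}$ (immediate from the definition since $\sqrt{r-s}\le\sqrt{r}$) and $t-s\le t$, we obtain
$$
|\nabla_x h_{-b}(s,\xi,t,x)|\le\frac{C_1}{\sqrt{t-s}}\textrm{e}^{C_2 t\|b\|_\infty^2+\frac{1}{2}\sqrt{t}\|\nabla b\|_{0\to t}}\,h_{\beta(t-s)}(x,\xi),
$$
where $h_{\beta(t-s)}$ is the periodic Gaussian kernel of variance $\beta(t-s)$.

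Next, I split $\nabla w=I_1+I_2$ following the two terms of \eqref{linear-re1}. For $I_1$, identifying $\omega_0$ with its periodic extension so that $\int_{[0,1]^3}h_{\beta t}(x,\xi)|\omega_0(\xi)|\,\textrm{d}\xi=G_{\beta t}(|\omega_0|)(x)$, the bound above directly gives $|I_1|\le\frac{C_1}{\sqrt{t}}\textrm{e}^{C_2 t\|b\|_\infty^2+\frac{1}{2}\sqrt{t}\|\nabla b\|_{0\to t}}G_{\beta t}(|\omega_0|)(x)$. For $I_2$, I use $|A(b)(\xi,s)|\le s^{-1/2}\|\nabla b\|_{0\to t}$ and insert Lemma \ref{lem:6.2} for $|w(\xi,s)|$, contributing an additional $2\sqrt{t}\|\nabla b\|_{0\to t}$ in the exponent. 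The semigroup identity $\int G_{\beta(t-s)}(\xi-x)G_{\beta s}(|\omega_0|)(\xi)\,\textrm{d}\xi=G_{\beta t}(|\omega_0|)(x)$ (the $\mathbb{R}^3$ heat-semigroup property applied to the periodic extension of $|\omega_0|$) collapses the spatial integration, and the time integral evaluates to $\int_0^t(t-s)^{-1/2}s^{-1/2}\,\textrm{d}s=\pi$.

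The hard part will be the final bookkeeping: the procedure above leaves a prefactor $\|\nabla b\|_{0\to t}$ outside the exponential and $\frac{5}{2}\sqrt{t}\|\nabla b\|_{0\to t}$ inside, whereas the target bound asks for $\frac{1}{\sqrt{t}}$ outside and $3\sqrt{t}\|\nabla b\|_{0\to t}$ inside. I would close this gap by the elementary inequality $x\le 2\textrm{e}^{x/2}$ for $x\ge0$, applied at $x=\sqrt{t}\|\nabla b\|_{0\to t}$, which gives $\|\nabla b\|_{0\to t}\le\frac{2}{\sqrt{t}}\textrm{e}^{\frac{1}{2}\sqrt{t}\|\nabla b\|_{0\to t}}$. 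This converts the linear prefactor into the exponent, upgrading $\frac{5}{2}$ to $3$ and simultaneously producing the required $\frac{1}{\sqrt{t}}$ scaling. The interchange of $\nabla_x$ with the $s$-integral in $I_2$ is justified by the integrable singularity $(t-s)^{-1/2}s^{-1/2}$ from Step 1 combined with the bound on $A(b)w$, so the plan is clean once the constants $C_1,C_2$ are tracked through.
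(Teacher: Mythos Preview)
Your proposal is correct and follows essentially the same route as the paper: differentiate \eqref{linear-re1} at $\tau=0$, split into $I_1+I_2$, apply Theorem \ref{thm:4.5} termwise to the periodic kernel, feed in Lemma \ref{lem:6.2} together with $|A(b)(\xi,s)|\le s^{-1/2}\|\nabla b\|_{0\to t}$, collapse the spatial integral by the heat semigroup, evaluate $\int_0^t(t-s)^{-1/2}s^{-1/2}\,\textrm{d}s=\pi$, and finally trade the prefactor $\|\nabla b\|_{0\to t}$ for $t^{-1/2}$ via $x\le 2e^{x/2}$, lifting $\tfrac52$ to $3$ in the exponent. The paper's argument is identical in structure and in the final bookkeeping step.
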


\begin{proof}
Recall that $w$ satisfies the following equality:
\[
w(x,t)=\int_{[0,1]^{3}}h_{-b}(0,\xi,t,x)\omega_{0}(\xi)\,\textrm{d}\xi
     +\int_{0}^{t}\int_{[0,1]^{3}}h_{-b}(s,\xi,t,x)A(b)(\xi,s)w(\xi,s)\,\textrm{d}\xi\textrm{d}s
\]
for $t>0$ and $x\in[0,1]^{3}$. Differentiating both sides in $x$
to obtain
\begin{align*}
\nabla w(x,t) & =\int_{[0,1]^{3}}\nabla_{x}h_{-b}(0,\xi,t,x)\omega_{0}(\xi)\,\textrm{d}\xi\\
 &\quad +\int_{0}^{t}\int_{[0,1]^{3}}\nabla_{x}h_{-b}(s,\xi,t,x)A(b)(\xi,s)w(\xi,s)
   \,\textrm{d}\xi\textrm{d}s\\
 & =\int_{[0,1]^{3}}\sum_{\bold{k}\in\mathbb{Z}^{3}}\nabla_{x}p_{-b}(0,\xi,t,x+\bold{k})
   \omega_{0}(\xi)\,\textrm{d}\xi\\
 &\quad +\int_{0}^{t}\int_{[0,1]^{3}}\sum_{\bold{k}\in\mathbb{Z}^{3}}
 \nabla_{x}p_{-b}(s,\xi,t,x+\bold{k})A(b)(\xi,s)w(\xi,s)\,\textrm{d}\xi\textrm{d}s.
\end{align*}
According to Theorem \ref{thm:4.5}, for every $\beta>1$, there
are two universal constants $C_{2}$ and $C_{3}$ such that
\begin{equation}\label{o}
\left|\nabla_{x}p_{b}\right|(s,\xi,t,x)\leq C_{3}\frac{\textrm{e}^{C_{2}(t-s)\left\Vert b\right\Vert _{\infty}^{2}+\frac{1}{2}\sqrt{t-s}\left\Vert \nabla b\right\Vert _{s\rightarrow t}}}{\sqrt{t-s}}G_{\beta(t-s)}(x-\xi)\qquad\,\,\mbox{for any $t>s\geq0$}.
\end{equation}
Thanks to this estimate, using the triangle inequality, we obtain
\begin{align*}
|\nabla w|(x,t)
& \leq\int_{[0,1]^{3}}|\omega_{0}(\xi)|\sum_{\bold{k}\in\mathbb{Z}^{3}}|
   \nabla_{x}p_{-b}|(0,\xi,t,x+\bold{k})\,\textrm{d}\xi\\
&\quad +\int_{0}^{t}\int_{[0,1]^{3}}|A(b)(\xi,s)||w(\xi,s)|
   \sum_{\bold{k}\in\mathbb{Z}^{3}}|\nabla_{x}p_{-b}|(s,\xi,t,x+\bold{k})\,\textrm{d}\xi\textrm{d}s\\
 & \leq C_{3}\frac{\textrm{e}^{\frac{\sqrt{t}}{2}
  \left\Vert \nabla b\right\Vert_{0\rightarrow t}+C_{2}t\left\Vert
   b\right\Vert _{\infty}^{2}}}{\sqrt{t}}\int_{[0,1]^{3}}|\omega_{0}(\xi)|
    \sum_{\bold{k}\in\mathbb{Z}^{3}}G_{\beta t}(\xi-x-\bold{k})\,\textrm{d}\xi\\
 &\quad +C_{3}\int_{0}^{t}\frac{\textrm{e}^{\frac{\sqrt{t-s}}{2}
  \left\Vert \nabla b\right\Vert _{s\rightarrow t}
   +C_{2}(t-s)\left\Vert b\right\Vert _{\infty}^{2}}}{\sqrt{t-s}}\int_{[0,1]^{3}}|A(b)(\xi,s)||w(\xi,s)|
    \sum_{\bold{k}\in\mathbb{Z}^{3}}G_{\beta(t-s)}(\xi-x-\bold{k})\,\textrm{d}\xi\textrm{d}s\\
 & =C_{3}\frac{\textrm{e}^{\frac{\sqrt{t}}{2}\left\Vert \nabla b\right\Vert _{0\rightarrow t}
   +C_{2}t\left\Vert b\right\Vert _{\infty}^{2}}}{\sqrt{t}}\int_{\mathbb{R}^{3}}
    |\omega_{0}(\xi)|G_{\beta t}(\xi-x)\,\textrm{d}\xi\\
 &\quad +C_{3}\int_{0}^{t}\frac{\textrm{e}^{\frac{1}{2}\sqrt{t-s}
  \left\Vert \nabla b\right\Vert _{s\rightarrow t}
   +C_{2}\left\Vert b\right\Vert _{\infty}^{2}(t-s)}}{\sqrt{t-s}}\int_{\mathbb{R}^{3}}|A(b)(\xi,s)||w(\xi,s)|
    G_{\beta(t-s)}(\xi-x)\,\textrm{d}\xi\textrm{d}s.
\end{align*}
By definition of $S=S(b)$, we have
\[
\left|A(b)(\xi,s)w(\xi,s)\right|
\leq\frac{1}{\sqrt{s}}\left\Vert \nabla b\right\Vert _{0\rightarrow t}|w(\xi,s)|.
\]
Together with estimate (\ref{a-22-8}), we then deduce
\[
\left|A(b)(\xi,s)w(\xi,s)\right|
\leq \frac{3C_3}{\sqrt{s}}\left\Vert \nabla b\right\Vert _{0\rightarrow t}\textrm{e}^{C_{2}s
  \left\Vert b\right\Vert _{\infty}^{2}
  +2\sqrt{s}\left\Vert \nabla b\right\Vert _{0\rightarrow s}}G_{\beta s}(|\omega_{0}|)(\xi)
\qquad\,\mbox{for all $t>s>0$}.
\]
It follows that
\begin{align*}
I_{2}(x,t)
& =\int_{0}^{t}\frac{\textrm{e}^{\frac{1}{2}\sqrt{t-s}
  \left\Vert \nabla b\right\Vert _{s\rightarrow t}
   +C_{2}\left\Vert b\right\Vert _{\infty}^{2}(t-s)}}{\sqrt{t-s}}
   \Big(\int_{\mathbb{R}^{3}}|A(b)(\xi,s)||w(\xi,s)|
     G_{\beta(t-s)}(\xi-x)\,\textrm{d}\xi\Big)\textrm{d}s\\
 & \leq3C_{3}\left\Vert \nabla b\right\Vert _{0\rightarrow t}
  G_{\beta t}(|\omega_{0}|)(x)
  \int_{0}^{t}\frac{\textrm{e}^{\frac{1}{2}\sqrt{t-s}
   \left\Vert \nabla b\right\Vert _{s\rightarrow t}
   +2\sqrt{s}\left\Vert \nabla b\right\Vert _{0\rightarrow s}
   +C_{2}t\left\Vert b\right\Vert _{\infty}^{2}}}{\sqrt{t-s}\sqrt{s}}\,\textrm{d}s\\
 & \leq3C_{3}\left\Vert \nabla b\right\Vert _{0\rightarrow t}
   G_{\beta t}(|\omega_{0}|)(x)\textrm{e}^{C_{2}
   \left\Vert b\right\Vert _{\infty}^{2}t
   +3\sqrt{t}\left\Vert \nabla b\right\Vert _{0\rightarrow t}}
   \int_{0}^{t}\frac{1}{\sqrt{t-s}}\frac{1}{\sqrt{s}}\,\textrm{d}s\\
 & \leq C_{4}G_{\beta t}(|\omega_{0}|)(x)
  \left\Vert \nabla b\right\Vert _{0\rightarrow t}\textrm{e}^{C_{2}\left\Vert b\right\Vert _{\infty}^{2}t
  +\frac{5}{2}\sqrt{t}\left\Vert \nabla b\right\Vert _{0\rightarrow t}}\\
 & \leq \frac{C_5}{\sqrt{t}}\textrm{e}^{C_{2}t
  \left\Vert b\right\Vert _{\infty}^{2}
  +3\sqrt{t}\left\Vert \nabla b\right\Vert _{0\rightarrow t}}G_{\beta t}(|\omega_{0}|)(x).
\end{align*}
Similarly, we have
\begin{align*}
I_{1}(x,t)
 =\int_{\mathbb{R}^{d}}\left|\nabla h_{-b}\right|(0,\xi,t,x)|\omega_{0}(\xi)|\,\textrm{d}\xi \leq\frac{C_{1}}{\sqrt{t}}
 \textrm{e}^{\sqrt{t}\left\Vert \nabla b\right\Vert _{0\rightarrow t}+C_{2}t\left\Vert b\right\Vert_{\infty}^{2}}
 G_{\beta t}(|\omega_{0}|)(x).
\end{align*}
The claimed estimate follows immediately.
\end{proof}

\begin{thm}
Let $v$ be the unique solution with mean zero of the Poisson equation{\rm :}
\[
\Delta v=-\nabla\wedge w  \qquad\,\,\mbox{in $\mathbb{R}^{3}$},
\]
which is also periodic.
Then there are constants
$C_{1}$ and $C_{2}$, depending only on $\beta$, such that
\begin{align*}
&|v(x,t)|\leq C_{1}\textrm{e}^{C_{2}t
  \left\Vert b\right\Vert _{\infty}^{2}
  +2\sqrt{t}\left\Vert \nabla b\right\Vert _{0\rightarrow t}}\left\Vert\omega_{0}\right\Vert _{\infty},\\
&|\nabla v(x,t)|\leq
\frac{C_{1}}{\sqrt{t}}
\textrm{e}^{C_{2}t
  \left\Vert b\right\Vert _{\infty}^{2}
  +3\sqrt{t}\left\Vert \nabla b\right\Vert _{0\rightarrow t}}\left\Vert \omega_{0}\right\Vert _{\infty},
\end{align*}
for all $t\geq0$ and $x\in[0,1]^{3}$.
\end{thm}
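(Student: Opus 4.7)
The plan is to combine the Green-function representation of $v$ with the pointwise bounds already established for $w$ and $\nabla w$ in Lemma \ref{lem:6.2} and Theorem \ref{thm:6.3}, together with the integrability of the derivatives of $C(x,y)$ given in property (i) of the Green function. The bridging observation is that $G_{\beta t}(|\omega_0|)(y)\le\|\omega_0\|_\infty$ pointwise in $y$, since $G_{\beta t}$ is a probability density on $\mathbb{R}^3$; consequently, Lemma \ref{lem:6.2} and Theorem \ref{thm:6.3} immediately furnish $y$-uniform bounds on $|w(y,t)|$ and $|\nabla w(y,t)|$ in terms of $\|\omega_0\|_\infty$.

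To bound $|v(x,t)|$, I would start from the Green formula
\[
v^i(x,t) = \int_{[0,1]^3} \varepsilon^{ijk} C(x,y)\,\partial_{y^j} w^k(y,t)\,\textrm{d}y
\]
and integrate by parts in $y^j$, using the periodicity of both $w$ and $C$ to kill the boundary contributions, to obtain
\[
v^i(x,t) = -\int_{[0,1]^3} \varepsilon^{ijk} \partial_{y^j} C(x,y)\, w^k(y,t)\,\textrm{d}y.
\]
Inserting the uniform bound $|w(y,t)|\le C_1\textrm{e}^{C_2 t\|b\|_\infty^2+2\sqrt{t}\|\nabla b\|_{0\to t}}\|\omega_0\|_\infty$ from Lemma \ref{lem:6.2} and invoking $\int_{[0,1]^3}|\nabla_y C(x,y)|\,\textrm{d}y\le C_2$ (which is property (i) combined with the symmetry $C(x,y)=C(y,x)$ of the torus Green function, equivalently the translation-invariance $C(x,y)=F(x-y)$) then yields the claimed estimate on $|v|$.

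For $|\nabla v(x,t)|$, I would differentiate the original Green formula in $x$ directly, which is justified because $|\nabla_x C(x,y)|\lesssim |x-y|^{-1}$ is locally integrable in dimension three and $w$ is smooth by parabolic regularity, giving
\[
\partial_{x^l} v^i(x,t) = \int_{[0,1]^3} \varepsilon^{ijk}\, \partial_{x^l} C(x,y)\, \partial_{y^j} w^k(y,t)\,\textrm{d}y.
\]
Substituting the uniform bound $|\nabla w(y,t)|\le\frac{C_1}{\sqrt{t}}\textrm{e}^{C_2 t\|b\|_\infty^2+3\sqrt{t}\|\nabla b\|_{0\to t}}\|\omega_0\|_\infty$ from Theorem \ref{thm:6.3} and closing with $\int_{[0,1]^3}|\nabla_x C(x,y)|\,\textrm{d}y\le C_2$ from property (i) produces the stated gradient bound; the $1/\sqrt{t}$ factor is inherited from the heat-kernel gradient estimate of Theorem \ref{thm:4.5} propagated through Theorem \ref{thm:6.3}. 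I do not anticipate a substantial obstacle; the main conceptual move is the passage $G_{\beta t}(|\omega_0|)\le\|\omega_0\|_\infty$ that converts the Gaussian smoothing of the initial vorticity into a clean $L^\infty$ control, after which the argument is essentially a bookkeeping of universal constants, collected to give the advertised dependence on $\|b\|_\infty$ and $\|\nabla b\|_{0\to t}$.
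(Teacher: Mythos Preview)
Your proposal is correct and follows essentially the same route as the paper: both arguments combine the Green-function representation of $v$ with the pointwise bounds on $w$ and $\nabla w$ from Lemma~\ref{lem:6.2} and Theorem~\ref{thm:6.3}, then invoke $G_{\beta t}(|\omega_0|)\le\|\omega_0\|_\infty$ and the integrability of $\nabla C$. The only difference is that you spell out the integration by parts needed to transfer the curl from $w$ to $C$ in the estimate for $|v|$, which the paper leaves implicit; this is exactly what is required to land on the exponent $2\sqrt{t}\|\nabla b\|_{0\to t}$ (from the bound on $w$) rather than $3\sqrt{t}\|\nabla b\|_{0\to t}$ (which would result from using $\nabla w$).
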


\begin{proof}
By Green's formula,
\begin{align*}
&v(x,t)=-\int_{[0,1]^{3}}C(x,y)\nabla\wedge w(y,t)\,{\rm d}t,\\
&\nabla_{x}v(x,t)=\int_{[0,1]^{3}}\nabla_{x}C(x,y)\nabla\wedge w(y,t)\,{\rm d}t,
\end{align*}
where $C(x,y)$ is the Green function of $\mathbb{T}^{3}$.
Hence,
we conclude that there exist universal constants $C_{4}$ and $C_{5}$ such that
\begin{align*}
&|v(x,t)|\leq C_{4}\textrm{e}^{2\sqrt{t}\left\Vert \nabla b\right\Vert _{0\rightarrow t}+C_{2}t\left\Vert b\right\Vert _{\infty}^{2}}\left\Vert \omega_{0}\right\Vert _{\infty},\\
&|\nabla v(x,t)|\leq
\frac{C_{5}}{\sqrt{t}}
C_5\textrm{e}^{3\sqrt{t}\left\Vert \nabla b\right\Vert _{0\rightarrow t}
 +C_{2}t\left\Vert b\right\Vert_{\infty}^{2}}\left\Vert \omega_{0}\right\Vert _{\infty},
\end{align*}
as $G_{\beta t}(|\omega_{0}|)(x)\leq\left\Vert \omega_{0}\right\Vert _{\infty}$.
\end{proof}

\begin{thm}
There are two universal constants $C_{1},C_{2}>0$ such that,
if
\begin{align*}
\left\Vert b\right\Vert _{L^{\infty}([0,T]\times\mathbb{R}^{3})}\leq C_{2}\left\Vert \omega_{0}\right\Vert _{\infty},
\qquad
\left\Vert \nabla b\right\Vert _{0\rightarrow T}\leq C_{2}\left\Vert \omega_{0}\right\Vert _{\infty},
\end{align*}
then, for
\[
T=\frac{C_{1}}{\left\Vert \omega_{0}\right\Vert _{\infty}^{2}},
\]
the following estimates hold{\rm :}
\begin{align*}
&\left\Vert v\right\Vert _{L^{\infty}([0,T]\times\mathbb{R}^{3})}\leq C_{2}\left\Vert \omega_{0}\right\Vert _{\infty},\qquad\,\,\,
\left\Vert w\right\Vert _{L^{\infty}([0,T]\times\mathbb{R}^{3})}
\leq C_{2}\left\Vert \omega_{0}\right\Vert _{\infty},\\
&\left\Vert \nabla v\right\Vert_{0\rightarrow T}
  \le C_{2}\left\Vert \omega_{0}\right\Vert _{\infty},
  \qquad\,\,\, \qquad
\left\Vert \nabla w\right\Vert _{0\rightarrow T}\leq C_{2}\left\Vert \omega_{0}\right\Vert _{\infty}.
\end{align*}
\end{thm}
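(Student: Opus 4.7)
The plan is a direct consolidation of Lemma \ref{lem:6.2} and the preceding two theorems, followed by a careful choice of the two universal constants so the estimates close. First, for universal constants $K_1, K_2 > 0$ depending only on the fixed $\beta > 1$, those earlier results yield, for all $t \in (0, T]$ and $x \in \mathbb{T}^3$,
\[
|w(x,t)| + |v(x,t)| \leq 2K_1 \textrm{e}^{K_2 t \|b\|_{\infty}^2 + 2\sqrt{t}\|\nabla b\|_{0\rightarrow t}} \|\omega_0\|_{\infty},
\]
\[
\sqrt{t}\bigl(|\nabla w(x,t)| + |\nabla v(x,t)|\bigr) \leq 2K_1 \textrm{e}^{K_2 t \|b\|_{\infty}^2 + 3\sqrt{t}\|\nabla b\|_{0\rightarrow t}} \|\omega_0\|_{\infty},
\]
where we have used $G_{\beta t}(|\omega_0|)(x) \leq \|\omega_0\|_{\infty}$.

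Next I would plug in the hypotheses $\|b\|_{L^\infty([0,T]\times\mathbb{R}^3)} \leq C_2 \|\omega_0\|_{\infty}$ and $\|\nabla b\|_{0\rightarrow T} \leq C_2 \|\omega_0\|_{\infty}$, together with $T = C_1 / \|\omega_0\|_{\infty}^2$. Since the parabolic norm $t \mapsto \|\nabla b\|_{0\rightarrow t}$ is monotonically non-decreasing in $t$, one has for $t \leq T$ the dimensionless bounds $t \|b\|_{\infty}^2 \leq C_1 C_2^2$ and $\sqrt{t}\|\nabla b\|_{0\rightarrow t} \leq \sqrt{C_1}\, C_2$. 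Substituting reduces the four desired estimates to the single scalar requirement
\[
2K_1 \textrm{e}^{K_2 C_1 C_2^2 + 3\sqrt{C_1}\, C_2} \leq C_2.
\]

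The choice of constants is the only delicate point, because $C_2$ appears on both sides --- as an upper bound on $b$ and $\nabla b$, and as the target bound on $v, w, \nabla v, \nabla w$. I would first fix $C_2$ so that $C_2 \geq 4K_1$; the requirement then becomes $\textrm{e}^{K_2 C_1 C_2^2 + 3\sqrt{C_1}\,C_2} \leq C_2/(2K_1)$, which is achievable by choosing $C_1 > 0$ sufficiently small (depending only on $K_1, K_2$, and the fixed $C_2$). Such a pair $(C_1, C_2)$ fixes the time horizon $T = C_1/\|\omega_0\|_{\infty}^2$; the sup-norm bounds on $|v|$ and $|w|$ then follow directly, while the parabolic-norm bounds on $\nabla v$ and $\nabla w$ follow from the pointwise control of $\sqrt{t}|\nabla v|$ and $\sqrt{t}|\nabla w|$ on $\mathbb{R}^3 \times [0,T]$. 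I do not expect any substantial obstacle beyond this bookkeeping: once the upstream heat-kernel estimates are in hand, the remaining argument is a fixed-point-style choice of constants that absorbs all numerical loss into the smallness of the time horizon.
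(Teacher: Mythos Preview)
Your proposal is correct and follows essentially the same strategy as the paper: consolidate the upstream bounds from Lemma~\ref{lem:6.2} and Theorems~\ref{thm:6.3}--6.4, insert the hypotheses on $b$ and $\nabla b$, and then choose $C_{1}$ and $C_{2}$ so that the resulting scalar inequality closes. The paper carries out the last step slightly differently---it introduces an auxiliary multiplier $\mu$, writes the closing condition as an explicit equation $C_{0}\lambda\,\textrm{e}^{C_{2}T\mu^{2}C_{0}^{2}\lambda^{2}+3\sqrt{T}\mu C_{0}\lambda}=\mu C_{0}\lambda$, solves the resulting quadratic in $\sqrt{T}$, and then fixes $\mu=e$---whereas you simply fix $C_{2}\geq 4K_{1}$ and take $C_{1}$ small; both arrive at $T=C_{1}/\Vert\omega_{0}\Vert_{\infty}^{2}$ and there is no substantive difference.
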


\begin{proof}
Let us choose $T>0$ such that
\[
C_{0}\lambda e^{C_{2}T\mu^{2}C_{0}^{2}\lambda^{2}+3\sqrt{T}\mu C_{0}\lambda}=\mu C_{0}\lambda.
\]
Then we solve $T$ to obtain
\[
T\mu^{2}C_{0}^{2}\lambda^{2}+\frac{3}{C_{2}}\sqrt{T}\mu C_{0}\lambda-\frac{1}{C_{2}}\ln\mu=0,
\]
so that
\[ 
\sqrt{T}=\frac{2\ln\mu}{C_{0}C_{2}\mu\left(\sqrt{9+4\ln\mu}+3\right)\lambda},
\]
where
$C_{0}=C_{4}\vee C_{5}\vee\ldots$ and
$\lambda=\max G_{\beta T}(|\omega_{0}|)(x)$.
Choose $\mu=e$. Then
\[
\sqrt{T}=\frac{2}{C_{0}C_{2}e\left(\sqrt{13}+3\right)\lambda}
\]
so that $T=\frac{C}{\lambda^{2}}$.
\end{proof}

\section{Navier-Stokes Equations}

Thanks to the explicit {\it a priori} estimates established in \S 3--\S6,
we are now in a position to study the strong solutions of the Cauchy problem:
\begin{equation}\label{ns-p1}
\begin{cases}
\partial_t u+ (u\cdot\nabla) u-\frac{1}{2}\Delta u=-\nabla P, \\
\nabla\cdot u=0,
\end{cases}
\end{equation}
with initial data:
\begin{equation}\label{ns-p-ID}
\quad u(x,0)=u_{0}(x),
\end{equation}
subject to the periodic condition that $u_0(x+e_{i})=u_0(x)$ for
$x\in[0,1]^{3}$, where $i=1,2,3$, $(e_{i})$ is the standard basis
of $\mathbb{R}^{3}$.

\begin{thm}
There are universal constants $C_{1}>0$ and $C_{2}>0$ such that, for
any periodic initial data $u_{0}$ with mean zero and $\omega_{0}=\nabla\wedge u_{0}$,
there exists a unique strong solution $u(x,t)$ of the Cauchy problem \eqref{ns-p1}--\eqref{ns-p-ID}
for the Navier-Stokes equations \eqref{ns-p1}
with periodic initial data \eqref{ns-p-ID} for $t\leq T$ so that
$T=\frac{C_{1}}{\left\Vert \omega_{0}\right\Vert _{\infty}^{2}}$,
and
\begin{align*}
&\left\Vert u\right\Vert _{L^{\infty}([0,T]\times\mathbb{R}^{3})}\leq C_{2}\left\Vert \omega_{0}\right\Vert _{\infty},\qquad
\left\Vert \omega\right\Vert_{L^{\infty}([0,T]\times\mathbb{R}^{3})}
\leq C_{2}\left\Vert \omega_{0}\right\Vert _{\infty},\\
&\left\Vert \nabla u\right\Vert _{0\rightarrow T}\leq C_{2}\left\Vert \omega_{0}\right\Vert _{\infty},
\qquad \qquad
\left\Vert \nabla\omega\right\Vert_{0\rightarrow T}
\leq C_{2}\left\Vert \omega_{0}\right\Vert _{\infty},
\end{align*}
where
$\left\Vert V\right\Vert _{0\rightarrow T}=\sup_{(x,t)\in\mathbb{R}^{3}\times[0,T]}\left|\sqrt{t}V(x,t)\right|$.
\end{thm}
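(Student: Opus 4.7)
The plan is to construct the solution as the limit of the iteration $u^{(n)}=V(u^{(n-1)})$ from Section~1.2, with the initial choice $u^{(0)}(x,t)\equiv u_{0}(x)$. By the construction of $V$ through the linear parabolic problem (\ref{eq:9-06}) and Hodge theory on $\mathbb{T}^{3}$, every iterate is smooth, periodic, divergence-free, and mean zero, and satisfies $u^{(n)}(\cdot,0)=u_{0}$. The first task is a uniform-in-$n$ \emph{a priori} bound. The last theorem of Section~6 states precisely that, whenever $b=u^{(n-1)}$ satisfies $\left\Vert b\right\Vert _{L^{\infty}([0,T]\times\mathbb{R}^{3})}\leq C_{2}\left\Vert \omega_{0}\right\Vert _{\infty}$ and $\left\Vert \nabla b\right\Vert _{0\rightarrow T}\leq C_{2}\left\Vert \omega_{0}\right\Vert _{\infty}$ on $[0,T]$ with $T=C_{1}\left\Vert \omega_{0}\right\Vert _{\infty}^{-2}$, then $v=V(b)=u^{(n)}$ inherits the same two inequalities, together with the companion bounds for $w^{(n)}=\nabla\wedge u^{(n)}$ and $\nabla w^{(n)}$. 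After verifying the base case (possibly on a shorter auxiliary interval that absorbs the cost of $\left\Vert \nabla u_{0}\right\Vert _{\infty}$ and then bootstrapping into the universal regime), induction yields uniform bounds on the whole sequence $(u^{(n)})$ over $[0,T]$.

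The second task is to convert the uniform bounds into convergence of the iteration. Setting $\delta u^{(n)}=u^{(n+1)}-u^{(n)}$ and $\delta w^{(n)}=w^{(n+1)}-w^{(n)}$ and subtracting the two copies of (\ref{eq:9-06}) with drifts $u^{(n)}$ and $u^{(n-1)}$, one finds that $\delta w^{(n)}$ satisfies a Cauchy problem of the form (\ref{eq:vort-linear-01}) driven by $-u^{(n)}$ and $A(u^{(n)})$, with vanishing initial data and inhomogeneous source
\[
\bigl(\delta u^{(n-1)}\cdot\nabla\bigr)w^{(n)}-A\bigl(\delta u^{(n-1)}\bigr)w^{(n)}.
\]
Combining the Feynman-Kac representation of Theorem~\ref{thm:F-K-01} with the heat kernel and gradient estimates of Theorem~\ref{thm:b-est}, and using the uniform bounds from the first step to control the coefficients, I would derive
\[
\left\Vert \delta u^{(n)}\right\Vert _{*}\leq\kappa(T)\left\Vert \delta u^{(n-1)}\right\Vert _{*}
\]
in the norm $\left\Vert u\right\Vert _{*}:=\left\Vert u\right\Vert _{L^{\infty}([0,T]\times\mathbb{R}^{3})}+\left\Vert \nabla u\right\Vert _{0\rightarrow T}$, where $\kappa(T)\to 0$ as $T\to 0$. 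Shrinking $C_{1}$ if necessary to force $\kappa(T)\leq\tfrac{1}{2}$, the iterates are Cauchy in $\left\Vert \cdot\right\Vert _{*}$ and converge to a fixed point $u$ of $V$.

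The third task is to identify the limit with the strong solution and to establish uniqueness. A fixed point $u=V(u)$ has $\omega=\nabla\wedge u$ solving (\ref{vort-p1}), so the identity (\ref{eq:9-04}) gives $\nabla\wedge\bigl(\partial_{t}u+(u\cdot\nabla)u-\tfrac{1}{2}\Delta u\bigr)=0$; because this expression is periodic, divergence-free, and mean zero, Hodge theory on $\mathbb{T}^{3}$ supplies a scalar $P$ realising (\ref{t-02}). Uniqueness follows by running the same contraction estimate on the difference of two strong solutions, both of which are fixed points of $V$. The pointwise bounds in the normalised case $\nu=\tfrac{1}{2}$, $L=1$ pass to the limit from the uniform estimates on the iterates, and the dimensionless rescaling from Section~1.2 then produces the general bounds (\ref{eq:c-01e})--(\ref{eq:C-02e}) on the horizon $T_{0}=C_{1}\nu^{2}L^{-4}\left\Vert \omega_{0}\right\Vert _{\infty}^{-2}$.

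The main obstacle is the contraction step. The delicate source term is $(\delta u^{(n-1)}\cdot\nabla)w^{(n)}$: the gradient of the vorticity iterate is singular at $s=0$ at rate $s^{-1/2}\left\Vert \omega_{0}\right\Vert _{\infty}$, so to close the iteration I must pair the parabolic norm (\ref{eq:p-norm}) with the sharp gradient estimate of Theorem~\ref{thm:4.5} and the cocycle bound of Lemma~\ref{lem6.1}, so that the resulting time integral in $s\in(0,t)$ yields a universal constant $\kappa(T)$ strictly smaller than $1$ on an interval $T\asymp\left\Vert \omega_{0}\right\Vert _{\infty}^{-2}$, with no hidden dependence on $\left\Vert \nabla u_{0}\right\Vert _{\infty}$.
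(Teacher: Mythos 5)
Your proposal follows the paper's construction --- the iteration $u^{(n)}=V(u^{(n-1)})$, the uniform bounds supplied by the final theorem of \S 6, and the Hodge-theoretic identification of the limit with a strong solution of \eqref{ns-p1}--\eqref{ns-p-ID} --- but it takes a genuinely different route at the convergence step. The paper passes to the limit by compactness: the uniform bounds plus interior parabolic regularity give bounds on $\partial_t^k\nabla^l u^{(n)}$ on $\mathbb{R}^3\times[\delta,T]$ for every $\delta>0$, and a convergent subsequence is extracted. You instead prove that consecutive differences contract in $\left\Vert u\right\Vert_{L^{\infty}}+\left\Vert \nabla u\right\Vert_{0\rightarrow T}$ after shrinking $C_1$. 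Your route buys two things the paper leaves implicit: (i) for an iteration $u^{(n)}=V(u^{(n-1)})$, convergence of a subsequence $u^{(n_k)}$ does not by itself make the limit a fixed point of $V$ --- one also needs $u^{(n_k-1)}$ to converge to the same limit --- whereas a contraction gives convergence of the full sequence and hence a genuine fixed point; (ii) uniqueness, which the paper asserts without argument, falls out of the same contraction applied to the difference of two solutions. The price is that the contraction must actually close, and you have correctly located the danger in the term $(\delta u^{(n-1)}\cdot\nabla)w^{(n)}$, whose coefficient is singular like $s^{-1/2}\left\Vert\omega_0\right\Vert_\infty$; the relevant integrals, $\int_0^t s^{-1/2}\,\mathrm{d}s=2\sqrt{t}$ for the sup bound and $\sqrt{t}\int_0^t(t-s)^{-1/2}s^{-1/2}\,\mathrm{d}s=\pi\sqrt{t}$ for the gradient bound via Theorem \ref{thm:4.5}, each contribute a factor $\sqrt{T}\left\Vert\omega_0\right\Vert_\infty=\sqrt{C_1}$, so the scheme does close on $T\asymp\left\Vert\omega_0\right\Vert_\infty^{-2}$ with a universal contraction constant. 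One caveat applies equally to both arguments and deserves to be made explicit: the base case $b=u^{(0)}=u_0$ has $\left\Vert\nabla u^{(0)}\right\Vert_{0\rightarrow T}=\sqrt{T}\left\Vert\nabla u_0\right\Vert_\infty$, which is not controlled by $\left\Vert\omega_0\right\Vert_\infty$ alone; your remark about starting on a shorter auxiliary interval and bootstrapping (using that $\left|\nabla u^{(1)}(\cdot,t)\right|\lesssim t^{-1/2}\left\Vert\omega_0\right\Vert_\infty$ after one step) is the right repair, and is a point the paper's proof does not address.
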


\begin{proof}
To construct the strong solution of the Cauchy problem
\eqref{ns-p1}--\eqref{ns-p-ID},
we construct the following iterations:

\medskip
Set
$u^{(0)}(x,t)=u_{0}(x)$
for all $x$ and $t\geq0$, and define $u^{(n)}=V(u^{(n-1)})$ inductively for $n\ge 1$.
Then $\nabla\cdot u^{(n)}=0$ and $\nabla\cdot w^{(n)}=0$, and
\[
\omega^{(n)}=\nabla\wedge u^{(n)}=w^{(n)}
\qquad\,\mbox{for all $n=1,2,\cdots$}.
\]
Hence, for each $n\geq 1$, $\omega^{(n)}$ solves the linear
parabolic equations on the
torus
\[
\begin{cases}
\partial_t \omega^{(n)}+(u^{(n-1)}\cdot\nabla)\omega^{(n)}-A(u^{(n-1)})\omega^{(n)}
-\frac{1}{2}\Delta\omega^{(n)}=0,\\
\omega^{(n)}(\cdot,0)=\omega_{0},
\end{cases}
\]
where $\omega^{(n)}=\nabla\wedge u^{(n)}$
and $\nabla\cdot u^{(n)}=0$.

\medskip
Let $T=\frac{C_{1}}{\left\Vert \omega_{0}\right\Vert _{\infty}^{2}}$ in Theorem 6.5.

Then
\begin{align*}
&|u^{(n)}(x,t)|\leq C_{2}\left\Vert \omega_{0}\right\Vert _{\infty},\qquad
|\nabla u^{(n)}(x,t)|
\leq \frac{C_{2}}{\sqrt{t}}
\left\Vert \omega_{0}\right\Vert _{\infty},\\
&|\omega^{(n)}(x,t)|\leq C_{2}\left\Vert \omega_{0}\right\Vert _{\infty},
 \qquad |\nabla\omega^{(n)}(x,t)|\leq\frac{C_{2}}{\sqrt{t}}\left\Vert \omega_{0}\right\Vert _{\infty},
\end{align*}
where $C_{1}$ and $C_{2}$ are universal positive constants. By the
standard parabolic regularity theory ({\it cf.} \citep{Ladyzenskaja Solonnikov and Uralceva 1968}),
\[
\sup_{\mathbb{R}^{3}\times[\delta,T]}\big|\partial_{t}^{k}\nabla^{l}u^{(n)}\big|\leq C_{k,l}
\qquad \mbox{for all $n$},
\]
where the constant, $C_{k,l}$, depends on $\left\Vert \omega_{0}\right\Vert _{\infty}$
and $\delta>0$ only, for every $\delta>0$ and $k,l\in\mathbb{N}$.
These \emph{a priori} estimates allow us to conclude that, if necessary
for a convergent subsequence, $u^{(n)}\rightarrow u$ and $\omega^{(n)}\rightarrow\omega$
(in a space with a norm including high derivatives) so that
\[
\partial_t\omega+(u\cdot\nabla)\omega-A(u)\omega-\frac{1}{2}\Delta\omega=0,
\]
where $\nabla\cdot u=0$ and $\omega=\nabla\wedge u$. Then
\begin{align*}
\nabla\wedge\big(\partial_t u+(u\cdot\nabla)u-\frac{1}{2}\Delta u\big)
 =\partial_t\omega+(u\cdot\nabla)\omega-A(u)\omega-\frac{1}{2}\Delta\omega =0,
\end{align*}
which implies that there is a scalar function $P$ such that
\[
\partial_t u+(u\cdot\nabla)u-\frac{1}{2}\Delta u=-\nabla P.
\]
Together with $\nabla\cdot u=0$, we conclude that $u$ is the
unique strong solution of the Cauchy problem \eqref{ns-p1}--\eqref{ns-p-ID}
for the Navier-Stokes equations \eqref{ns-p1}.  This completes the proof.
\end{proof}

\bigskip
\noindent
{\bf Acknowledgements}.
The research of Gui-Qiang G. Chen was supported in part by
the UK Engineering and Physical Sciences Research Council Award
EP/L015811/1, EP/V008854, and EP/V051121/1.
The research of Zhongmin Qian was supported in part by the EPSRC Centre for Doctoral Training in Mathematics of Random Systems: Analysis, Modelling and Simulation (EP/S023925/1).

\bigskip

\end{document}